\definecolor{uuuuuu}{rgb}{0.26666666666666666,0.26666666666666666,0.26666666666666666}
\definecolor{xdxdff}{rgb}{0.49019607843137253,0.49019607843137253,1.}
\definecolor{ffqqqq}{rgb}{1.,0.,0.}
\definecolor{uuuuuu}{rgb}{0.26666666666666666,0.26666666666666666,0.26666666666666666}
\definecolor{qqwuqq}{rgb}{0.,0.39215686274509803,0.}
\definecolor{zzttqq}{rgb}{0.6,0.2,0.}
\definecolor{xdxdff}{rgb}{0.49019607843137253,0.49019607843137253,1.}
\definecolor{qqqqff}{rgb}{0.,0.,1.}
\definecolor{cqcqcq}{rgb}{0.7529411764705882,0.7529411764705882,0.7529411764705882}
\definecolor{sqsqsq}{rgb}{0.12549019607843137,0.12549019607843137,0.12549019607843137}
\theoremstyle{plain}
\newtheorem{theorem}[subsection]{Theorem}
\newtheorem{lemma}[subsection]{Lemma}
\newtheorem{defi}[subsection]{Definition}
\newtheorem{prop}[subsection]{Proposition}
\theoremstyle{definition}
\newtheorem{remark}[subsection]{Remark}
\newcommand{\tarc}{\mbox{\large$\frown$}}
\newcommand{\arc}[1]{\stackrel{\tarc}{#1}}
\newtheorem{conj}[section]{Conjecture}
\newcommand{\uu}{\cup}
\newcommand{\UU}{\bigcup}
\newcommand{\es}{\emptyset}
\newcommand{\set}[1]{\{#1\}}
\newcommand{\ga}{\alpha}
\newcommand{\gb}{\beta}
\newcommand{\gd}{\delta}
\renewcommand{\gg}{\gamma}
\newcommand{\gq}{\theta}
\newcommand{\gw}{\widehat}
\newcommand{\tit}{\textit}
\newcommand{\D}[1]{\mathbb{#1}}
\newcommand{\te}{\text}
\newcommand{\pa}{\partial}
\newcommand{\tl}{\tilde}
\begin{document}
To appear, Journal of Optimization Theory and Applications
\title{Quantization for uniform distributions on hexagonal, semicircular, and elliptical curves}

\author{$^1$Gabriela Pena}
\address{School of Mathematical and Statistical Sciences\\
University of Texas Rio Grande Valley\\
1201 West University Drive\\
Edinburg, TX 78539-2999, USA.}
\email{$^1$gabriela.penamtz@gmail.com}
\email{\{$^2$hansapani.rodrigo,$^3$mrinal.roychowdhury, $^4$josef.sifuentes, $^5$erwin.suazo\}\newline @utrgv.edu}

\author{$^2$Hansapani Rodrigo}
 \author{$^3$Mrinal Kanti Roychowdhury}
\author{$^4$Josef Sifuentes}
 \author{$^5$Erwin Suazo}

\subjclass[2010]{60Exx, 94A34.}
\keywords{Uniform distribution, optimal quantizers, quantization error, quantization dimension, quantization coefficient}

\date{}
\maketitle

\pagestyle{myheadings}\markboth{G. Pena, H. Rodrigo, M.K. Roychowdhury, J. Sifuentes, and E. Suazo}
{Quantization for uniform distributions on hexagonal, semicircular, and elliptical curves}

\begin{abstract}
In this paper, first we have defined a uniform distribution on the boundary of a regular hexagon, and then investigated the optimal sets of $n$-means and the $n$th quantization errors for all positive integers $n$. We give an exact formula to determine them, if $n$ is of the form $n=6k$ for some positive integer $k$. We further calculate the quantization dimension, the quantization coefficient, and show that the quantization dimension is equal to the dimension of the object, and the quantization coefficient exists as a finite positive number. Then, we define a mixture of two uniform distributions on the boundary of a semicircular disc, and obtain a sequence and an algorithm, with the help of which we determine the optimal sets of $n$-means and the $n$th quantization errors for all positive integers $n$ with respect to the mixed distribution.  Finally, for a uniform distribution defined on an elliptical curve, we investigate the optimal sets of $n$-means and the $n$th quantization errors for all positive integers $n$.
\end{abstract}

\section{Introduction}

Quantization is a process of approximation with broad application in engineering and technology (see \cite{GG, GN, Z}).
For the mathematical treatment of quantization one is referred to Graf-Luschgy's book (see \cite{GL1}).
Recently, optimal quantization for uniform distributions on different regions have been investigated by several authors, for example, see \cite{DR, R, RR1, RR2}.
On the contrary, optimal quantization for uniform distributions on curves has not yet been much investigated. Such kind of problems has rigorous applications in many areas including signal processing. In this note we would like to list two such applications. The first application comes within the area of signal processing. When we drive long distances, quite often cellular signals get cut off. This happens because we are either far away from the tower, or there is no tower nearby to catch the signal. In optimal quantization one of our goals is to find the exact locations of the towers so that while driving we can get the
best signal at our cell phones. The second application comes within the area of agriculture. The amount of agricultural water usage needs to be controlled by placing a minimal number of water sprinkles (or any other resources) in a way that they cover the whole cropland as required. We can use our knowledge in quantization to place the optimal number of sprinkles in order to fully cover the cropland area as these crops are usually planted on a boundary line of a $k$-sided polygon including the shapes that we discuss in this paper. In this regard, we find that it is important to investigate the optimal sets of $n$-means and the $n$th quantization errors for the points on a boundary of a hexagonal, a semicircular, or an elliptical curve with respect to a probability distribution.

In this paper, Section~\ref{sec1} deals with the quantization for a uniform distribution defined on the boundary of a regular hexagon.  For this uniform distribution, in Theorem~\ref{Th41}, we give an exact formula to determine the optimal sets of $n$-means and the $n$th quantization errors for all $n$ if $n$ is of the form $n=6k$ for some positive integer $k$. We further calculate the quantization dimension, and the quantization coefficient, and show that the quantization dimension is equal to the dimension of the object, and the quantization coefficient exists as a finite positive number. In Proposition~\ref{prop000}, we show that for sufficiently large $n$ the points in an optimal set of $n$-means lie on the boundary of the hexagon. Section~\ref{sec2} deals with the quantization for a mixture of two uniform distributions defined on the boundary of a semicircular disc. In this section, first we have explicitly determined the optimal sets of $n$-means and the $n$th quantization errors for all $1\leq n\leq 9$. Then, we have proved Theorem~\ref{Th61}, and defined a sequence and algorithm. With the help of the sequence and the algorithm, Theorem~\ref{Th61} gives all the optimal sets of $n$-means and the $n$th quantization errors for all $n\geq 4$. Section~\ref{sec3} deals with the quantization for a uniform distribution defined on the boundary of an ellipse. In Proposition~\ref{prop70},  Proposition~\ref{prop71}, and Proposition~\ref{prop72}, we determine the optimal sets of $n$-means and the $n$th quantization errors for $n=2, 6, 7$. Following the technique given in Proposition~\ref{prop71} and Proposition~\ref{prop72}, we can obtain the optimal sets of $n$-means and the $n$th quantization errors for the uniform distribution for any positive integer $n$.

Finally, we would like to mention that the techniques, given in this paper, to determine the optimal sets of $n$-means and the $n$th quantization errors with respect to the probability distributions defined on three different curves will help to further investigate them for a more general curve. Such a problem with respect to any probability distribution, even for a uniform distribution, defined on any curve is not known yet.

\section{Preliminaries}

Let $\D R^d$ denote the $d$-dimensional Euclidean space, $\|\cdot\|$ denote the Euclidean norm on $\D R^d$ for any $d\geq 1$, and $n\in \D N$. Then, the $n$th \textit{quantization
error} for a Borel probability measure $P$ on $\D R^d$ is defined by
\begin{equation*} \label{eq0} V_n:=V_n(P)=\inf \Big\{\int \min_{a\in\alpha} \|x-a\|^2 dP(x) : \alpha \subset \mathbb R^d, \text{ card}(\alpha) \leq n \Big\}.\end{equation*}
A set $\ga$ for which the infimum is achieved and contains no more than $n$ points is called an \textit{optimal set of $n$-means} for $P$, and the points in an optimal set are called \tit{optimal quantizers}. Of course, this makes sense only if the mean squared error or the expected squared Euclidean distance $\int \| x\|^2 dP(x)$ is finite (see \cite{AW, GKL, GL1, GL2}). It is known that for a continuous probability measure an optimal set of $n$-means always has exactly $n$-elements (see \cite{GL1}). The number
\[D(P):=\lim_{n\to \infty}  \frac{2\log n}{-\log V_n(P)},\] if it exists, is called the \tit{quantization dimension} of the probability measure $P$; on the other hand, for any $s\in (0, +\infty)$, the number $\lim_{n\to \infty} n^{\frac 2 s} V_n(P)$, if it exists, is called the $s$-dimensional \tit{quantization coefficient} for $P$. For more details about the quantization dimension and the quantization coefficient, and their connections, one is referred to \cite{GL1, P}. To know about the asymptotic quantization for probability measures on Riemannian manifolds, one can see \cite{I}.

\section{Quantization for a uniform distribution on the boundary of a regular hexagon} \label{sec1}

In this section, first we give the following proposition (see \cite{GG, GL1}):
\begin{prop} \label{prop0}
Let $\ga$ be an optimal set of $n$-means for $P$, and $a\in \ga$. Then,

$(i)$ $P(M(a|\ga))>0$, $(ii)$ $ P(\partial M(a|\ga))=0$, $(iii)$ $a=E(X : X \in M(a|\ga))$,
where $M(a|\ga)$ is the Voronoi region of $a\in \ga , $ i.e.,  $M(a|\ga)$ is the set of all elements $x$ in $\D R^d$ which are closest to $a$ among all the elements in $\ga$.
\end{prop}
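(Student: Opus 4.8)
The plan is to derive all three statements from the defining optimality property of $\ga$, namely that $\ga$ minimizes $x \mapsto \int \min_{a\in\ga}\|x-a\|^2\,dP(x)$ among all sets of cardinality at most $n$, together with the fact (cited from \cite{GL1}) that for a continuous measure $P$ an optimal set has exactly $n$ elements. Throughout, write $V(\ga) = \int \min_{a\in\ga}\|x-a\|^2\,dP(x)$ for the distortion of a finite set $\ga$.

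First I would prove $(i)$. Suppose for contradiction that $P(M(a\mid\ga)) = 0$ for some $a\in\ga$. Then $\ga' := \ga\sm\set{a}$ has at most $n-1$ elements, and since almost every $x$ (with respect to $P$) lies outside $M(a\mid\ga)$, removing $a$ does not change the value of $\min_{b\in\ga}\|x-b\|^2$ for $P$-a.e.\ $x$; hence $V(\ga') = V(\ga) = V_n(P)$. But an optimal set of $n$-means for a continuous probability measure must have exactly $n$ elements, contradicting $\mathrm{card}(\ga')\le n-1$. (Alternatively, one can argue that a set of $n-1$ points achieving $V_n(P)$ forces $V_{n-1}(P) = V_n(P)$, and then a standard splitting argument shows $V_{n-1}(P) > V_n(P)$ strictly for a continuous measure, again a contradiction.) Next, $(ii)$: the boundary $\pa M(a\mid\ga)$ is contained in the finite union of the bisecting hyperplanes $\setm{x}{\|x-a\| = \|x-b\|}$ over $b\in\ga\sm\set{a}$; since $P$ is (absolutely continuous, hence) non-atomic and in our applications is a one-dimensional uniform distribution supported on a curve that meets each such hyperplane in at most finitely many points, each bisector has $P$-measure zero, and therefore so does their finite union. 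More abstractly, for the statement to hold one invokes that $P$ assigns measure zero to every hyperplane, which holds for the distributions considered here.

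Then $(iii)$ is the first-order (stationarity) condition. Fix $a\in\ga$ and consider perturbing only this one point: for any $y\in\D R^d$, let $\ga_y = (\ga\sm\set{a})\uu\set{y}$. Using $(ii)$, the Voronoi regions of the other points are essentially unchanged on a set of full measure when we move $a$ slightly, so for $y$ near $a$ we have
\[
V(\ga_y) \le \int_{M(a\mid\ga)} \|x-y\|^2\,dP(x) + \int_{\D R^d\sm M(a\mid\ga)} \min_{b\in\ga\sm\set a}\|x-b\|^2\,dP(x),
\]
and at $y=a$ this is an equality. Hence the function $y\mapsto \int_{M(a\mid\ga)}\|x-y\|^2\,dP(x)$ has a local minimum at $y=a$; expanding $\|x-y\|^2 = \|x\|^2 - 2\la x,y\ra + \|y\|^2$ and using $P(M(a\mid\ga))>0$ from $(i)$, this is a strictly convex quadratic in $y$ whose unique minimizer is $y = \frac{1}{P(M(a\mid\ga))}\int_{M(a\mid\ga)} x\,dP(x) = E(X : X\in M(a\mid\ga))$. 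Therefore $a$ equals this conditional expectation.

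The main obstacle is largely bookkeeping rather than depth: one must be careful that $(ii)$ genuinely gives $P$-measure zero boundaries for the specific measures in this paper (a routine check since bisectors are hyperplanes and the supports are rectifiable curves), and that in the perturbation argument for $(iii)$ the contributions from the other Voronoi cells do not decrease — which is exactly why the inequality above goes the right way, so the local-minimum conclusion is legitimate. Since this proposition is quoted from \cite{GG, GL1}, I would in fact keep the write-up short, citing those sources for the standard details and only indicating the splitting/stationarity arguments as above.
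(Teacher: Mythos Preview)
Your proposal is correct, and in fact the paper does not supply a proof of this proposition at all: it merely states it with the parenthetical ``see \cite{GG, GL1}'' and moves on. Your sketch reproduces the standard arguments found in those references (cardinality drop for $(i)$, hyperplane boundaries for $(ii)$, and the quadratic stationarity/conditional-expectation computation for $(iii)$), so there is nothing to compare against beyond noting that you have filled in what the paper leaves to citation, and your final remark about keeping the write-up brief with a pointer to \cite{GG, GL1} is exactly what the paper itself does.
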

Due to the above proposition, we see that if $\ga$ is an optimal set and $a\in\ga$, then $a$ is the \tit{conditional expectation} of the random variable $X$ given that $X$ takes values in the Voronoi region of $a$.
Let $i$ and $j$ be the unit vectors in the positive directions of the $x_1$- and $x_2$-axes, respectively. In the sequel, the position vectors of the points $A_0, A_1, \cdots, P, Q, R, D, E, F,$ etc. will, respectively, be denoted by $\tl a_0, \tl a_1, \cdots, \tl p, \tl q, \tl r, \tl d, \tl e, \tl f$, etc, where by the position vector $\tilde a$ of a point $A$, it is meant that $\overrightarrow{OA}=\tilde a$. In addition, we will identify the position vector of a point $(a_1, a_2)$ by $(a_1, a_2):=a_1 i +a_2 j$, and apologize for any abuse in notation. For any two vectors $\vec u$ and $\vec v$, let $\vec u \cdot \vec v$ denote the dot product between the two vectors $\vec u$ and $\vec v$. Then, for any vector $\vec v$, by $(\vec v)^2$, we mean $(\vec v)^2:= \vec v\cdot \vec v$. Thus, $|\vec v|:=\sqrt{\vec v\cdot \vec v}$, which is called the length of the vector $\vec v$. For any two position vectors $\tilde a:=( a_1, a_2)$ and $\tilde b:=( b_1, b_2)$, we write $\rho(\tilde a, \tilde b):=\|(a_1, b_1)-(a_2, b_2)\|^2=(a_1-a_2)^2 +(b_1-b_2)^2$, which gives the squared Euclidean distance between the two points $(a_1, a_2)$ and $(b_1, b_2)$.  Let $P$ and $Q$ belong to an optimal set of $n$-means for some positive integer $n$, and let $D$ be a point on the boundary of the Voronoi regions of the points $P$ and $Q$. Since the boundary of the Voronoi regions of any two points is the perpendicular bisector of the line segment joining the points, we have
$|\overrightarrow{DP}|=|\overrightarrow{DQ}|, \te{ i.e., } (\overrightarrow{DP})^2=(\overrightarrow{DQ})^2$ implying
$(\tilde p-\tilde d)^2=(\tilde q-\tilde d)^2$, i.e., $\rho(\tilde d, \tilde p)-\rho(\tilde d, \tilde q)=0$. We call such an equation a \tit{canonical equation}.
By $E(X)$ and $V:=V(X)$, we represent the expectation and the variance of a random variable $X$ with respect to the probability distribution under consideration.

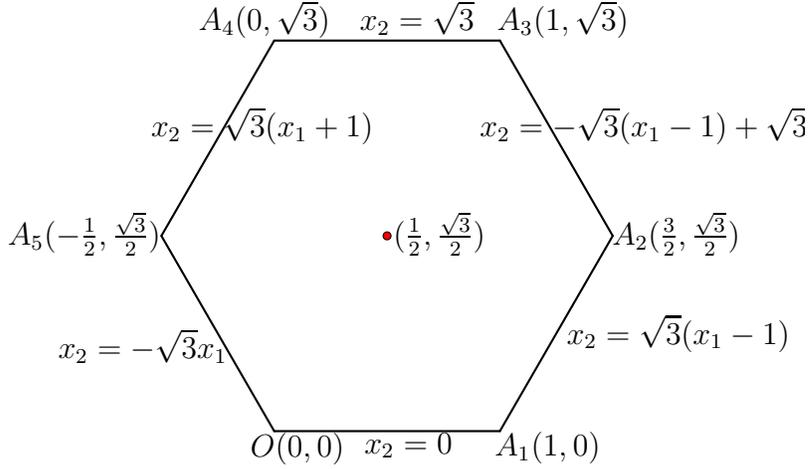
\begin{figure}
\begin{tikzpicture}[line cap=round,line join=round,>=triangle 45,x=0.5cm,y=0.5cm]
\clip(-7.2681259889226,-1.489754228607887) rectangle (14.101073814442522,12.197079045737732);
\draw [line width=0.8pt] (0.,0.)-- (6.,0.);
\draw [line width=0.8pt] (6.,0.)-- (9.,5.196152422706632);
\draw [line width=0.8pt] (9.,5.196152422706632)-- (6.,10.392304845413264);
\draw [line width=0.8pt] (0.,10.392304845413264)-- (-3.,5.196152422706632);
\draw [line width=0.8pt] (-3.,5.196152422706632)-- (0.,0.);
\draw [line width=0.8pt] (0.,10.392304845413264)-- (6.,10.392304845413264);
\draw (-0.9237452612829172,0.24761843609545116) node[anchor=north west] {$O(0, 0)$};
\draw (5.581218553862171,0.26761843609545116) node[anchor=north west] {$A_1(1, 0)$};
\draw (8.699101936361089,6.101355977887573) node[anchor=north west] {$A_2(\frac  32, \frac{\sqrt 3}{2})$};
\draw (5.641831690538654,11.729195663238811) node[anchor=north west] {$A_3(1, \sqrt 3)$};
\draw (-2.293321937782701,11.729195663238811) node[anchor=north west] {$A_4(0, \sqrt 3)$};
\draw (-7.365030256369105,6.101355977887573) node[anchor=north west] {$A_5(-\frac 12, \frac {\sqrt 3} 2)$};
\draw (7.493014266743939,3.3810200486827204) node[anchor=north west] {$x_2=\sqrt 3(x_1-1)$};
\draw (5.183014266743939,8.893954640622091) node[anchor=north west] {$x_2=-\sqrt 3(x_1-1)+\sqrt 3$};
\draw (1.989043214627868,11.729195663238811) node[anchor=north west] {$x_2=\sqrt 3$};
\draw (-3.562198083310638,8.893954640622091) node[anchor=north west] {$x_2=\sqrt 3(x_1+1)$};
\draw (-6.03200827013437,2.9871514154475536) node[anchor=north west] {$x_2=-\sqrt 3 x_1$};
\draw (2.109043214627868, 0.24761843609545116) node[anchor=north west] {$x_2=0$};
\draw (2.8816855745100686,6.101355977887573) node[anchor=north west] {$(\frac 12, \frac {\sqrt 3}2)$};
\begin{scriptsize}
\draw [fill=ffqqqq] (3.,5.196152422706632) circle (1.5pt);
\end{scriptsize}
\end{tikzpicture}
\caption{Regular hexagon with center $(\frac 12, \frac {\sqrt 3}2)$ and side length one.} \label{Fig1}
\end{figure}

Let $P$ be the uniform distribution defined on the boundary $L$ of the regular hexagon with vertices $O(0, 0), A_1(1, 0), A_2(\frac 32, \frac{\sqrt{3}}2),  A_3(1, \sqrt{3}), A_4(0, \sqrt{3}), A_5(-\frac 12, \frac{\sqrt{3}}{2})$ (see Figure~\ref{Fig1}). Let $s$ represent the distance of any point on $L$ from the origin tracing along the boundary of the hexagon in the counterclockwise direction. Then, the points $O, A_1, A_2, A_3, A_4, A_5$ are, respectively, represented by $s=0, s=1, s=2, s=3, s=4$, and $s=5$. For any two points $A$ and $B$ on the boundary $L$, not necessarily the vertices, by $\widehat{AB}$, it is meant the portion of the boundary $L$ with initial point $A$ and terminal point $B$. For example, suppose $D$ is a point on the side $OA_1$, and $E$ is a point on the side $A_2A_3$, then by $\widehat{DE}$ it is meant
\[\widehat{DE}=DA_1\uu A_1A_2\uu A_2E.\]
Notice that $\gw{ED}=EA_3\uu A_3A_4\uu A_4A_5\uu A_5O\uu OD$, and so, $\gw{DE}$ and $\gw{ED}$ are not same.
 The probability density function (pdf) $f$ of the uniform distribution $P$ is given by $f(s):=f(x_1, x_2)=\frac 1{6}$ for all $(x_1, x_2)\in L$, and zero, otherwise.  Notice that $L=\mathop{\uu}\limits_{j=1}^6 L_j$, where $L_j$, for $0\leq t\leq 1$, are represented by the parametric equations as follows:
\begin{align*}
L_j=\left\{\begin{array} {ll} \set{t \tl a_1+(1-t) \tl o } & \te{ if } j=1 \\
\set{t \tl a_j+(1-t) \tl a_{j-1}} & \te{ if } j=2, 3, 4, 5, 6.
\end{array} \right.
\end{align*}
Thus, for $0\leq t\leq 1$, we have
\begin{align*}
L_1&=\set {(t, 0)},  \quad L_2=\Big\{\Big (\frac{t}{2}+1,\frac{\sqrt{3} t}{2}\Big)\Big\}, \quad L_3=\Big\{\Big(\frac{3 (1-t)}{2}+t,\frac{1}{2} \sqrt{3} (1-t)+\sqrt{3} t\Big)\Big\},\\
L_4&=\Big\{\Big(1-t,\sqrt{3} (1-t)+\sqrt{3} t\Big)\Big\}, \quad L_5=\Big\{\Big(-\frac{t}{2},\sqrt{3} (1-t)+\frac{\sqrt{3} t}{2}\Big)\Big\}, \te{ and } \\ L_6&=\Big\{\Big(\frac{t-1}{2},\frac{1}{2} \sqrt{3} (1-t)\Big)\Big\}.
\end{align*}
Again, $dP(s)=P(ds)=f(x_1, x_2) ds=\frac 1 6 ds$. On each $L_j$ for $1\leq j\leq 6$, we have $(ds)^2=(dx_1)^2+(dx_2)^2=(dt)^2$ yielding $ds=dt$.

Let us now prove the following lemma.
\begin{lemma} \label{lemma00}
Let $X$ be a continuous random variable with uniform distribution taking values on $L$. Then,
$E(X)=(\frac 1 2, \frac {\sqrt 3}{ 2} ) \te{ and } V:=V(X)=\frac 5 6.$
\end{lemma}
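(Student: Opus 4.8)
The plan is to compute $E(X)$ and $V(X)$ directly from the parametrization $L = \bigcup_{j=1}^6 L_j$, using the fact that the uniform distribution assigns mass $\frac16$ to each side and that on each $L_j$ we have $ds = dt$ for $0 \le t \le 1$. Since each side is a unit segment, the conditional distribution of $X$ given $X \in L_j$ is the uniform distribution on $L_j$, so $E(X \mid X \in L_j)$ is simply the midpoint of the segment $L_j$; then $E(X) = \frac16 \sum_{j=1}^6 E(X \mid X \in L_j)$ is the average of the six side-midpoints. By symmetry of the regular hexagon about its center $(\frac12, \frac{\sqrt3}{2})$, this average is the center, giving $E(X) = (\frac12, \frac{\sqrt3}{2})$; alternatively one reads off the midpoints from the parametric formulas for $L_1, \dots, L_6$ and adds.

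For the variance, I would use $V(X) = E(\|X - E(X)\|^2) = \frac16 \sum_{j=1}^6 \int_0^1 \rho\big((x_1(t), x_2(t)), (\tfrac12, \tfrac{\sqrt3}{2})\big)\, dt$, where $(x_1(t), x_2(t))$ runs over the parametrization of $L_j$. Again by the sixfold symmetry, each of the six integrals is equal, so it suffices to evaluate one of them, say over $L_1 = \{(t, 0) : 0 \le t \le 1\}$: this gives $\int_0^1 \big[(t - \tfrac12)^2 + \tfrac34\big]\, dt = \tfrac1{12} + \tfrac34 = \tfrac56$. Hence $V(X) = \frac16 \cdot 6 \cdot \frac56 = \frac56$.

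The computation is entirely routine; the only thing requiring a word of care is the claim that the six per-side integrals agree, which follows because the isometries of the hexagon fixing the center permute the sides $L_j$ and preserve both the arc-length measure and the distance to the center. One could instead avoid invoking symmetry and just grind all six integrals using the explicit formulas for $L_2, \dots, L_6$ listed above; this is the main (though very mild) obstacle, and it is dispatched by direct computation. Finally, I would note that the value $E(X) = (\frac12, \frac{\sqrt3}{2})$ matches the center of the hexagon indicated in Figure~\ref{Fig1}, as a sanity check.
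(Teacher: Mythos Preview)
Your proposal is correct and follows essentially the same approach as the paper: both compute $E(X)$ and $V(X)$ directly from the parametrization, and for the variance both reduce by symmetry to the single integral $\int_0^1 \rho((t,0),(\tfrac12,\tfrac{\sqrt3}{2}))\,dt=\tfrac56$. The only cosmetic difference is that for $E(X)$ the paper sums the six parametric integrands explicitly, whereas you phrase it via side-midpoints and symmetry; these are the same computation.
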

\begin{proof} Recall that by $(a, b)$ it is meant $a i+b j$, where $i$ and $j$ are two unit vectors in the positive directions of $x_1$- and $x_2$-axes, respectively. Thus, we have,
\begin{align*}
E(X) &=\int_L(x_1 i+x_2 j) dP =\frac{1}{6}\Big(\int_0^1\Big( (t,0) + \Big(\frac{t}{2}+1,\frac{\sqrt{3} t}{2}\Big)+\Big(\frac{3 (1-t)}{2}+t,\frac{1}{2} \sqrt{3} (1-t)+\sqrt{3} t\Big)\\
&+\Big(1-t,\sqrt{3} (1-t)+\sqrt{3} t\Big)+\Big(-\frac{t}{2},\sqrt{3} (1-t)+\frac{\sqrt{3} t}{2}\Big)+\Big(\frac{t-1}{2},\frac{1}{2} \sqrt{3} (1-t)\Big)\Big) \, dt
\Big)
\end{align*}
implying $E(X)=(\frac{1}{2},\frac{\sqrt{3}}{2})$.  The variance $V:=V(X)$ is given by
\begin{align*} V(X)&=E\|X-E(X)\|^2= \int_L\rho((x_1, x_2), E(X))dP= \frac 1 6 \int_{L}\rho((x_1, x_2), E(X))ds\\
&=\frac 1 6 \sum_{j=1}^6\int_{L_j}\rho((x_1, x_2), E(X))ds=\int_{L_1}\rho((t, 0), (\frac{1}{2},\frac{\sqrt{3}}{2})) dt=\int_0^1\rho((t, 0), (\frac{1}{2},\frac{\sqrt{3}}{2}))\, dt=\frac 56.
\end{align*}
Hence, the proof of the lemma is complete.
\end{proof}

\begin{remark} \label{remark45} For any $(a, b) \in \D R^2$, we have
\begin{align*}
&E\|X-(a, b)\|^2=V(X)+\|(a, b)-(\frac 12, \frac {\sqrt 3}{2})\|^2,
\end{align*}
which is minimum if $(a, b)=(\frac 12, \frac {\sqrt 3}{ 2})$, and the minimum value is $V(X)$.
Thus, we see that the optimal set of one-mean is the set $\set{(\frac 12, \frac {\sqrt 3}{ 2})}$, and the corresponding quantization error is the variance $V:=V(X)$ of the random variable $X$.
\end{remark}

The following proposition gives the optimal set of two-means.

\begin{prop} \label{prop01}
 The set $\set{(\frac{13}{12},\frac{\sqrt{3}}{2}), (-\frac{1}{12},\frac{\sqrt{3}}{2})}$ forms an optimal set of two-means, and the quantization error for two-means is given by $V_2=\frac{71}{144}=0.493056$.
\end{prop}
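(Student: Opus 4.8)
The plan is to first obtain the upper bound $V_2\le\frac{71}{144}$ by computing the two-means quantization error of the displayed set, and then to show that no two-point set can do strictly better.

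\emph{Upper bound.} Write $p:=(\frac{13}{12},\frac{\sqrt3}{2})$ and $q:=(-\frac1{12},\frac{\sqrt3}{2})$. Since $p,q$ share their second coordinate and are symmetric about the vertical line $x_1=\frac12$, the perpendicular bisector of the segment $pq$ is exactly that line, which is an axis of symmetry of $L$; hence the Voronoi region of $p$ meets $L$ in the right half $L^{+}:=L\ii\set{x_1\ge\frac12}$ of the boundary, that of $q$ in the left half $L^{-}$, and $P(L^{\pm})=\frac12$. Decomposing $L^{+}$ into the part of $L_1$ with $t\in[\frac12,1]$, all of $L_2$ and $L_3$, and the part of $L_4$ with $t\in[0,\frac12]$, a centroid computation in the style of Lemma~\ref{lemma00} gives $E(X:X\in L^{+})=p$ and, by reflection, $E(X:X\in L^{-})=q$; so $\set{p,q}$ already satisfies the necessary conditions of Proposition~\ref{prop0}. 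Using $dP=\frac16\,ds$ and the reflection symmetry,
\[V_2\le\frac16\int_L\min\set{\rho((x_1,x_2),p),\ \rho((x_1,x_2),q)}\,ds=\frac13\int_{L^{+}}\rho((x_1,x_2),p)\,ds,\]
and integrating over the four arcs making up $L^{+}$ gives $\int_{L^{+}}\rho((x_1,x_2),p)\,ds=\frac{213}{144}$, so that $V_2\le\frac{213}{432}=\frac{71}{144}$.

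\emph{Lower bound.} Let $\ga=\set{a_1,a_2}$ be an optimal set of two-means. By Proposition~\ref{prop0}, $a_i=E(X:X\in M(a_i|\ga))$, and since $L$ bounds a convex region the perpendicular bisector $\ell$ of $a_1a_2$ either contains a whole side of the hexagon --- impossible by Proposition~\ref{prop0}$(ii)$ --- or meets $L$ in a single point --- impossible by Proposition~\ref{prop0}$(i)$ --- so it crosses $L$ transversally at two points $A,B$ and cuts $L$ into the arcs $\widehat{AB}$ and $\widehat{BA}$, which are the traces of the two Voronoi regions; thus $a_1,a_2$ are the centroids of these arcs. Hence $\ga$ solves a constrained system: pick the two sides $L_i,L_j$ carrying the cut points and the parameters along them, form the two arc-centroids, and require $\ell$ to be their perpendicular bisector. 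I would then show that every solution of this system has two-means error at least $\frac{71}{144}$, the value $\frac{71}{144}$ being attained exactly at the three ``edge-type'' axis chords (through midpoints of opposite sides, e.g., $x_1=\frac12$), each of which reproduces $\set{p,q}$; for comparison, the three ``vertex-type'' axis chords (through opposite vertices, e.g., the line $x_2=\frac{\sqrt3}{2}$ through $A_5$ and $A_2$) yield $\set{(\frac12,\frac{5\sqrt3}{6}),(\frac12,\frac{\sqrt3}{6})}$, with the larger error $\frac12$. Together with the upper bound this gives $V_2=\frac{71}{144}$ and shows $\set{p,q}$ is an optimal set.

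\emph{Main obstacle.} The real work is this last step: proving that a chord which is the perpendicular bisector of its own pair of induced arc-centroids cannot give error below $\frac{71}{144}$. The bisector condition rigidly links the two cut parameters, and for each admissible pair of sides the arc-centroids are explicit piecewise-polynomial functions, so each case boils down to a one-variable inequality; carrying these out cleanly --- and setting up the case list so that the $6$-fold symmetry of $L$ is exploited correctly --- is the delicate and lengthy part. Everything else (the upper-bound integral and the $\frac12$ value for the vertex-type cut) is routine computation with $dP=\frac16\,ds$, Proposition~\ref{prop0}, and Lemma~\ref{lemma00}.
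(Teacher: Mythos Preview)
Your strategy is exactly the paper's: use Proposition~\ref{prop0} to reduce to chords whose two arc-centroids have that chord as perpendicular bisector, then run a case analysis on which pair of sides the chord meets. The paper executes this by using the $6$-fold symmetry to fix one cut point on $OA_1$ and then treating five cases for the second side ($A_1A_2$ through $A_5O$); in each case it writes down the two canonical equations $\rho(\tilde d,\tilde p)=\rho(\tilde d,\tilde q)$ and $\rho(\tilde e,\tilde p)=\rho(\tilde e,\tilde q)$, solves, and compares the resulting distortions (the adjacent-side cases have no solution; the opposite-side case yields the value $\tfrac{71}{144}$; the remaining cases give $\tfrac12$).

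One correction to your outline: the bisector condition is \emph{two} equations, not one --- both cut points $D$ and $E$ must be equidistant from the induced centroids $p$ and $q$ --- so for each pair of sides you get a $0$-dimensional system with isolated solutions, not a ``one-variable inequality'' over a continuum. This actually makes the case analysis shorter than you suggest: you only compare finitely many candidate errors, exactly as the paper does.
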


\begin{proof}
Let $P$ and $Q$ form an optimal set of two-means. Let $\ell$ be the boundary of their Voronoi regions. Let $P$ lie in the region which contains the vertex $A_1$, and $Q$ lie in the other region. The following cases can arise:

Case~1. $\ell$ intersects the sides $OA_1$ and $A_1A_2$.

Let $\ell$ intersect $OA_1$ and $A_1A_2$ at the points $D$ and $E$, respectively. Let the parametric values for $D$ and $E$ be, respectively, $\ga$ and $\gb$, i.e., $\tilde d=(\ga, 0)$, and $\tilde e=(\frac{\gb} 2+1, \frac{\sqrt 3} 2 \gb)$, where $0\leq \ga<1$ and $0<\gb\leq 1$. Then, we have
\begin{align*}
\tilde p&=E(X : X\in \gw{DE})=\frac{\int_{\gw{DE}}(x_1, x_2) dP}{\int_{\gw{DE}}  dP}=\frac{\int_{\alpha }^1 (t,0) \, dt+ \int_0^{\beta } (\frac{t}{2}+1,\frac{\sqrt{3} t}{2})\, dt}{\int_{\alpha }^1 1 \, dt+\int_0^{\beta } 1 \, dt}\\
&=\Big(\frac{-2 \alpha ^2+\beta ^2+4 \beta +2}{-4 \alpha +4 \beta +4},\frac{\sqrt{3} \beta ^2}{-4 \alpha +4 \beta +4}\Big).
\end{align*}
Similarly,
\begin{align*}
\tilde q&=E(X : X \in \gw{ED})=\Big(\frac{2 \alpha ^2-\beta  (\beta +4)+10}{4 (\alpha -\beta +5)},-\frac{\sqrt{3} \left(\beta ^2-12\right)}{4 (\alpha -\beta +5)}\Big).
\end{align*}
Recall that the boundary $\ell$ of the two Voronoi regions is the perpendicular bisector of the line segment joining $P$ and $Q$, and hence, we have the two canonical equations as $\rho(\tilde d, \tilde p)-\rho(\tilde d, \tilde q)=0$, and $\rho(\tilde e, \tilde p)-\rho(\tilde e, \tilde q)=0$. There is no solution for these two equations yielding the fact that this case cannot happen.

Case~2. $\ell$ intersects the sides $OA_1$ and $A_2A_3$.

 Let $\ell$ intersect $OA_1$ and $A_2A_3$ at the points $D$ and $E$, respectively. Let the parametric values for $D$ and $E$ be, respectively, $\ga$ and $\gb$, i.e., $\tilde d=(\ga, 0)$, and $\tilde e=(\frac{3 (1-\beta )}{2}+\beta ,\frac{1}{2} \sqrt{3} (1-\beta )+\sqrt{3} \beta)$, where $0\leq \ga\leq 1$ and $0\leq \gb\leq 1$. Then, proceeding in the similar way as Case~1, we have
\begin{align*}
\tilde p&=E(X : X\in \gw{DE})=\frac{\int_{\gw{DE}}(x_1, x_2) dP}{\int_{\gw{DE}}  dP}=\Big(\frac{-2 \alpha ^2-\beta ^2+6 \beta +7}{-4 \alpha +4 \beta +8},-\frac{\sqrt{3} (\beta +1)^2}{4 (\alpha -\beta -2)}\Big),
\end{align*}
and
\begin{align*}
\tilde q&=E(X : X \in \gw{ED})=\Big(\frac{2 \alpha ^2+\beta ^2-6 \beta +5}{4 \alpha -4 \beta +16},-\frac{\sqrt{3} \left(\beta ^2+2 \beta -11\right)}{4 (\alpha -\beta +4)}\Big).
\end{align*}
Solving the two canonical equations $\rho(\tilde d, \tilde p)-\rho(\tilde d, \tilde q)=0$, and $\rho(\tilde e, \tilde p)-\rho(\tilde e, \tilde q)=0$, we have $\ga=0$, and $\gb=1$ yielding
$\tilde p=(1,\frac{1}{\sqrt{3}}), \te{ and } \tilde q=(0,\frac{2}{\sqrt{3}})$. Due to symmetry, we can obtain the distortion error due to the points $\tilde p$ and $\tilde q$ as
\begin{align*} \int_{L}\min_{a\in \set{\tilde p, \tilde q}}&\|x-a\|^2 dP=\frac{2}{6} \Big(\int_0^1 \rho((t,0),(1,\frac{1}{\sqrt{3}})) \, dt+  \int_0^{1} \rho((\frac{t}{2}+1,\frac{\sqrt{3} t}{2}),(1,\frac{1}{\sqrt{3}})) \, dt\\
&+ \int_0^1 \rho((\frac{3 (1-t)}{2}+t,\frac{1}{2} \sqrt{3} (1-t)+\sqrt{3} t), (1,\frac{1}{\sqrt{3}})) \, dt
\Big)=\frac 12.
\end{align*}

Case~3. $\ell$ intersects the sides $OA_1$ and $A_3A_4$.

 Let $\ell$ intersect $OA_1$ and $A_3A_4$ at the points $D$ and $E$, respectively. Let the parametric values for $D$ and $E$ be, respectively, $\ga$ and $\gb$, i.e., $\tilde d=(\ga, 0)$, and $\tilde e=(\gb, \sqrt 3)$, where $0\leq \ga\leq 1$ and $0\leq \gb\leq 1$. Then, proceeding in the similar way as Case~1, we have
\begin{align*}
\tilde p&=E(X : X\in \gw{DE})=\frac{\int_{\gw{DE}}(x_1, x_2) dP}{\int_{\gw{DE}}  dP}=\Big(\frac{\alpha ^2+\beta ^2-2 \beta -6}{2 (\alpha -\beta -3)},-\frac{\sqrt{3} (\beta +1)}{\alpha -\beta -3}\Big),
\end{align*}
and
\begin{align*}
\tilde q&=E(X : X \in \gw{ED})=\Big(\frac{\alpha ^2+\beta ^2-2 \beta }{2 \alpha -2 \beta +6},-\frac{\sqrt{3} (\beta -2)}{\alpha -\beta +3}\Big).
\end{align*}
Solving the two canonical equations $\rho(\tilde d, \tilde p)-\rho(\tilde d, \tilde q)=0$, and $\rho(\tilde e, \tilde p)-\rho(\tilde e, \tilde q)=0$, we have the following three sets of solutions:
\[\{\alpha \to 0,\beta \to 0\},  \{\alpha \to \frac{1}{2},\beta \to \frac{1}{2} \},\{\alpha \to 1,\beta \to 1\}.\]
If $\{\alpha \to 0,\beta \to 0\}$, then $\tilde p=(1,\frac{1}{\sqrt{3}})$ and $\tilde q=(0,\frac{2}{\sqrt{3}})$ which falls in Case~2. If $\{\alpha \to \frac{1}{2},\beta \to \frac{1}{2}\}$, then $\tilde p=(\frac{13}{12},\frac{\sqrt{3}}{2})$, and $\tilde q=(-\frac{1}{12},\frac{\sqrt{3}}{2})$, and the corresponding distortion is obtained as
\begin{align*} &\int_{L}\min_{a\in \set{\tilde p, \tilde q}}\|x-a\|^2 dP=\frac{2}{6} \Big(\int_{\frac 12}^1 \rho((t,0),(\frac{13}{12}, \frac{\sqrt{3}}{2})) \, dt+  \int_0^1 \rho((\frac{t}{2}+1,\frac{\sqrt{3} t}{2}),(\frac{13}{12},\frac{\sqrt{3}}{2})) \, dt\\
&+\int_0^1 \rho((\frac{3 (1-t)}{2}+t,\frac{1}{2} \sqrt{3} (1-t)+\sqrt{3} t), (\frac{13}{12},\frac{\sqrt{3}}{2})) \, dt\\
&+\int_0^{\frac 12} \rho((1-t,\sqrt{3} (1-t)+\sqrt{3} t),(\frac{13}{12},\frac{\sqrt{3}}{2})) \, dt
\Big)=\frac{71}{144}=0.493056,
\end{align*}
which is smaller than $\frac 12$, the distortion error obtained in Case~2.
If $\{\alpha \to 1,\beta \to 1\}$, then $\tilde p=(1, \frac{2}{\sqrt{3}})$ and $\tilde q=(0,\frac{1}{\sqrt{3}})$ which falls in Case~2 after giving a rotation of the hexagon with respect to its center by an angle of $\frac \pi 3$. Thus, the distortion error in this case is $\frac 12$ which is same as in Case~2.

Case~4. $\ell$ intersects the sides $OA_1$ and $A_4A_5$.

This case is the reflection of Case~2 with respect to the diagonal $A_1A_4$, and so the same distortion error $\frac 12$ occurs in this case.

Case~5. $\ell$ intersects the sides $OA_1$ and $OA_5$.

This case is the reflection of Case~1 with respect to the line $x_1=\frac 12$, and so, this case cannot happen.

Taking into account all the distortion errors, we see that the distortion error is minimum when  $\tilde p=(\frac{13}{12},\frac{\sqrt{3}}{2})$, and $\tilde q=(-\frac{1}{12},\frac{\sqrt{3}}{2})$. Hence, the optimal set of two-means is $\set{(\frac{13}{12},\frac{\sqrt{3}}{2}), (-\frac{1}{12},\frac{\sqrt{3}}{2})}$, and the quantization error for two-means is $V_2=\frac{71}{144}=0.493056$. Thus, the proof of the proposition is deduced (see Figure~\ref{Fig2}).
\end{proof}

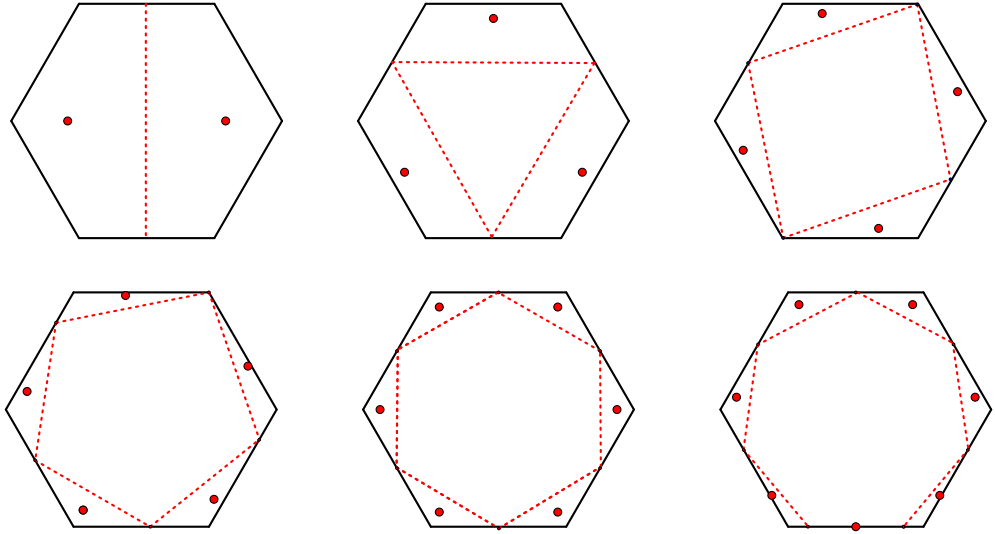
\begin{figure}
\begin{tikzpicture}[line cap=round,line join=round,>=triangle 45,x=0.3cm,y=0.3cm]
\clip(-4.889585772599514,-0.741403825461075) rectangle (10.479614030765603,11.945429448884541);
\draw [line width=0.8pt] (0.,0.)-- (6.,0.);
\draw [line width=0.8pt] (6.,0.)-- (9.,5.196152422706632);
\draw [line width=0.8pt] (9.,5.196152422706632)-- (6.,10.392304845413264);
\draw [line width=0.8pt] (6.,10.392304845413264)-- (0.,10.392304845413264);
\draw [line width=0.8pt] (0.,10.392304845413264)-- (-3.,5.196152422706632);
\draw[line width=0.8pt] (-3.,5.196152422706632)-- (0.,0.);
\draw [line width=0.8 pt,dotted,color=ffqqqq] (2.9635249845395184,10.391881720268692)-- (2.9635249845395184,-0.04563706046323258);
\begin{scriptsize}
\draw [fill=ffqqqq] (6.5,5.196152422706632) circle (1.5pt);
\draw [fill=ffqqqq] (-0.5,5.196152422706632) circle (1.5pt);
\end{scriptsize}
\end{tikzpicture}\begin{tikzpicture}[line cap=round,line join=round,>=triangle 45,x=0.3cm,y=0.3cm]
\clip(-4.889585772599514,-0.741403825461075) rectangle (10.479614030765603,11.945429448884541);
\draw [line width=0.8pt](0.,0.)-- (6.,0.);
\draw [line width=0.8pt](6.,0.)-- (9.,5.196152422706632);
\draw [line width=0.8pt](9.,5.196152422706632)-- (6.,10.392304845413264);
\draw [line width=0.8pt](6.,10.392304845413264)-- (0.,10.392304845413264);
\draw [line width=0.8pt] (0.,10.392304845413264)-- (-3.,5.196152422706632);
\draw [line width=0.8pt] (-3.,5.196152422706632)-- (0.,0.);
\draw [line width=0.8pt,dotted,color=ffqqqq] (-1.4911292573502182,7.808103486245999)-- (7.477259521414533,7.76477793659013);
\draw [line width=0.8pt,dotted,color=ffqqqq] (7.477259521414533,7.76477793659013)-- (2.9280768075483548,0.05283009784555518);
\draw [line width=0.8pt,dotted,color=ffqqqq] (2.9280768075483548,0.05283009784555518)-- (-1.4911292573502182,7.808103486245999);
\begin{scriptsize}
\draw [fill=ffqqqq] (6.9375,2.92283573777248) circle (1.5pt);
\draw [fill=ffqqqq] (3.,9.742785792574935) circle (1.5pt);
\draw [fill=ffqqqq] (-0.9375,2.92283573777248) circle (1.5pt);
\end{scriptsize}
\end{tikzpicture}
\begin{tikzpicture}[line cap=round,line join=round,>=triangle 45,x=0.3cm,y=0.3cm]
\clip(-4.889585772599514,-0.741403825461075) rectangle (10.479614030765603,11.945429448884541);
\draw [line width=0.8pt](0.,0.)-- (6.,0.);
\draw [line width=0.8pt](6.,0.)-- (9.,5.196152422706632);
\draw [line width=0.8pt](9.,5.196152422706632)-- (6.,10.392304845413264);
\draw [line width=0.8pt](6.,10.392304845413264)-- (0.,10.392304845413264);
\draw [line width=0.8pt](0.,10.392304845413264)-- (-3.,5.196152422706632);
\draw[line width=0.8pt] (-3.,5.196152422706632)-- (0.,0.);
\draw [line width=0.8pt,dotted,color=ffqqqq] (-1.5344548070060866,7.76477793659013)-- (5.960865283459141,10.364310915942234);
\draw [line width=0.8pt,dotted,color=ffqqqq] (5.960865283459141,10.364310915942234)-- (7.433933971758665,2.6090375275417905);
\draw [line width=0.8pt,dotted,color=ffqqqq] (-1.5344548070060866,7.76477793659013)-- (0.0252649806051745,0.009504548189686782);
\draw [line width=0.8pt,dotted,color=ffqqqq] (0.0252649806051745,0.009504548189686782)-- (7.433933971758665,2.6090375275417905);
\begin{scriptsize}
\draw [fill=ffqqqq] (4.25,0.4330127018922193) circle (1.5pt);
\draw [fill=ffqqqq] (7.75,6.495190528383289) circle (1.5pt);
\draw [fill=ffqqqq] (1.75,9.959292143521044) circle (1.5pt);
\draw [fill=ffqqqq] (-1.75,3.8971143170299736) circle (1.5pt);
\draw [fill=qqqqff] (-1.5344548070060866,7.76477793659013) circle (0.5pt);
\draw [fill=qqqqff] (5.960865283459141,10.364310915942234) circle (0.5pt);
\draw [fill=qqqqff] (7.433933971758665,2.6090375275417905) circle (0.5pt);
\draw [fill=qqqqff] (0.0252649806051745,0.009504548189686782) circle (0.5pt);
\end{scriptsize}
\end{tikzpicture}
\begin{tikzpicture}[line cap=round,line join=round,>=triangle 45,x=0.3cm,y=0.3cm]
\clip(-4.889585772599514,-0.741403825461075) rectangle (10.479614030765603,11.945429448884541);
\draw [line width=0.8pt](0.,0.)-- (6.,0.);
\draw [line width=0.8pt](6.,0.)-- (9.,5.196152422706632);
\draw[line width=0.8pt] (9.,5.196152422706632)-- (6.,10.392304845413264);
\draw [line width=0.8pt](6.,10.392304845413264)-- (0.,10.392304845413264);
\draw [line width=0.8pt](0.,10.392304845413264)-- (-3.,5.196152422706632);
\draw [line width=0.8pt](-3.,5.196152422706632)-- (0.,0.);
\draw [line width=0.8pt,dotted,color=ffqqqq] (-0.7746103879911016,9.050640297342035)-- (-1.7000891347211804,2.9446407587328944);
\draw [line width=0.8pt,dotted,color=ffqqqq] (-1.7000891347211804,2.9446407587328944)-- (3.396780481098202,0.);
\draw [line width=0.8pt,dotted,color=ffqqqq] (3.396780481098202,0.)-- (8.236234235461756,3.8732713134447065);
\draw [line width=0.8pt,dotted,color=ffqqqq] (-0.7746103879911019,9.050640297342037)-- (6.,10.392304845413264);
\draw [line width=0.8pt,dotted,color=ffqqqq] (6.,10.392304845413264)-- (8.236234235461758,3.873271313444708);
\begin{scriptsize}
\draw [fill=ffqqqq] (0.4257,0.737334) circle (1.5pt);
\draw [fill=ffqqqq] (6.22419,1.21614) circle (1.5pt);
\draw [fill=ffqqqq] (7.72859,7.12179) circle (1.5pt);
\draw [fill=ffqqqq] (0.4257,0.737334) circle (1.5pt);
\draw [fill=ffqqqq] (2.30335,10.254) circle (1.5pt);
\draw [fill=ffqqqq] (-2.05889,5.99838) circle (1.5pt);
\draw [fill=ffqqqq] (-0.776469,9.04742) circle (0.5pt);
\draw [fill=ffqqqq] (-1.7028,2.94934) circle (0.5pt);
\draw [fill=ffqqqq] (3.4056,0.) circle (0.5pt);
\draw [fill=ffqqqq] (8.22353,3.85127) circle (0.5pt);
\draw [fill=ffqqqq] (6.,10.392304845413264) circle (0.5pt);
\end{scriptsize}
\end{tikzpicture}
\begin{tikzpicture}[line cap=round,line join=round,>=triangle 45,x=0.3cm,y=0.3cm]
\clip(-4.889585772599514,-0.741403825461075) rectangle (10.479614030765603,11.945429448884541);
\draw [line width=0.8pt](0.,0.)-- (6.,0.);
\draw[line width=0.8pt] (6.,0.)-- (9.,5.196152422706632);
\draw[line width=0.8pt] (9.,5.196152422706632)-- (6.,10.392304845413264);
\draw [line width=0.8pt](6.,10.392304845413264)-- (0.,10.392304845413264);
\draw [line width=0.8pt](0.,10.392304845413264)-- (-3.,5.196152422706632);
\draw [line width=0.8pt](-3.,5.196152422706632)-- (0.,0.);
\draw [line width=0.8pt,dotted,color=ffqqqq] (2.971021659671325,10.392304845413264)-- (-1.464169149001539,7.856289488467713);
\draw [line width=0.8pt,dotted,color=ffqqqq] (-1.464169149001539,7.856289488467713)-- (-1.517344191517739,2.628117232278245);
\draw [line width=0.8pt,dotted,color=ffqqqq] (-1.517344191517739,2.628117232278245)-- (3.0162266373492934,-0.07663369307186471);
\draw [line width=0.8pt,dotted,color=ffqqqq] (3.0162266373492934,-0.07663369307186471)-- (7.533730595296208,2.6564993161758923);
\draw [line width=0.8pt,dotted,color=ffqqqq] (2.971021659671325,10.392304845413264)-- (-1.464169149001539,7.856289488467713);
\draw [line width=0.8 pt,dotted,color=ffqqqq] (-1.464169149001539,7.856289488467713)-- (-1.517344191517739,2.628117232278245);
\draw [line width=0.8 pt,dotted,color=ffqqqq] (-1.517344191517739,2.628117232278245)-- (3.0162266373492934,-0.07663369307186471);
\draw [line width=0.8 pt,dotted,color=ffqqqq] (3.0162266373492934,-0.07663369307186471)-- (7.533730595296208,2.6564993161758923);
\draw [line width=0.8 pt,dotted,color=ffqqqq] (7.533730595296208,2.6564993161758923)-- (7.5,7.794228634059947);
\draw [line width=0.8 pt,dotted,color=ffqqqq] (7.5,7.794228634059947)-- (2.971021659671325,10.392304845413264);
\begin{scriptsize}
\draw [fill=ffqqqq] (0.375,0.649519052838329) circle (1.5pt);
\draw [fill=ffqqqq] (5.625,0.649519052838329) circle (1.5pt);
\draw [fill=ffqqqq] (8.25,5.196152422706632) circle (1.5pt);
\draw [fill=ffqqqq] (5.625,9.742785792574935) circle (1.5pt);
\draw [fill=ffqqqq] (0.375,9.742785792574935) circle (1.5pt);
\draw [fill=ffqqqq] (-2.25,5.196152422706632) circle (1.5pt);
\draw [fill=ffqqqq] (-1.5,2.598076211353316) circle (0.5pt);
\draw [fill=ffqqqq] (-1.5,7.794228634059947) circle (0.5pt);
\draw [fill=ffqqqq] (3.,10.392304845413264) circle (0.5pt);
\draw [fill=ffqqqq] (7.5,7.794228634059947) circle (0.5pt);
\draw [fill=ffqqqq] (7.5,2.598076211353316) circle (0.5pt);
\draw [fill=ffqqqq] (3.0162266373492934,-0.07663369307186471) circle (0.5pt);
\draw [fill=qqqqff] (-21.575281219465545,17.869742445081663) circle (2.5pt);
\end{scriptsize}
\end{tikzpicture}
\begin{tikzpicture}[line cap=round,line join=round,>=triangle 45,x=0.3cm,y=0.3cm]
\clip(-4.889585772599514,-0.741403825461075) rectangle (10.479614030765603,11.945429448884541);
\draw [line width=0.8pt] (0.,0.)-- (6.,0.);
\draw [line width=0.8pt] (6.,0.)-- (9.,5.196152422706632);
\draw [line width=0.8pt] (9.,5.196152422706632)-- (6.,10.392304845413264);
\draw [line width=0.8pt](6.,10.392304845413264)-- (0.,10.392304845413264);
\draw [line width=0.8pt](0.,10.392304845413264)-- (-3.,5.196152422706632);
\draw[line width=0.8pt] (-3.,5.196152422706632)-- (0.,0.);
\draw [line width=0.8 pt,dotted,color=ffqqqq] (3.078195511176974,10.392304845413264)-- (-1.3395094073730982,8.072206494626581);
\draw [line width=0.8 pt,dotted,color=ffqqqq] (-1.3395094073730982,8.072206494626581)-- (-1.9768819777718163,3.424060026068034);
\draw [line width=0.8 pt,dotted,color=ffqqqq] (-1.9768819777718163,3.424060026068034)-- (0.8972130299975904,0.);
\draw [line width=0.8 pt,dotted,color=ffqqqq] (3.078195511176974,10.392304845413264)-- (7.293309976190826,8.152226256715059);
\draw [line width=0.8 pt,dotted,color=ffqqqq] (7.293309976190826,8.152226256715059)-- (7.9859213097111965,3.4397166082535198);
\draw [line width=0.8 pt,dotted,color=ffqqqq] (7.9859213097111965,3.4397166082535198)-- (5.116940004453355,0.);
\begin{scriptsize}
\draw [fill=ffqqqq] (3.,0.) circle (1.5pt);
\draw [fill=ffqqqq] (6.72171,1.38947) circle (1.5pt);
\draw [fill=ffqqqq] (8.28689,5.74371) circle (1.5pt);
\draw [fill=ffqqqq] (5.51846,9.85003) circle (1.5pt);
\draw [fill=ffqqqq] (0.481541,9.85003) circle (1.5pt);
\draw [fill=ffqqqq] (-2.28689,5.74371) circle (1.5pt);
\draw [fill=ffqqqq] (-0.721713,1.38947) circle (1.5pt);
\draw [fill=ffqqqq] (-1.33154,8.08601) circle (0.5pt);
\draw [fill=ffqqqq] (-1.96381,3.40142) circle (0.5pt);
\draw [fill=ffqqqq] (0.879769,0.) circle (0.5pt);
\draw [fill=ffqqqq] (5.12023,0.) circle (0.5pt);
\draw [fill=ffqqqq] (7.96381,3.40142) circle (0.5pt);
\draw [fill=ffqqqq] (7.33154,8.08601) circle (0.5pt);
\draw [fill=ffqqqq] (3.,10.392304845413264) circle (0.5pt);
\draw [fill=ffqqqq] (5.116940004453354,0.) circle (0.5pt);
\end{scriptsize}
\end{tikzpicture}
\caption{Points in an optimal set of $n$-means for $2\leq n\leq 7$.} \label{Fig2}
\end{figure}

\begin{prop} \label{prop02}
The set $\set{(\frac{37}{32},\frac{9 \sqrt{3}}{32}), (\frac{1}{2},\frac{15 \sqrt{3}}{16}), (-\frac{5}{32},\frac{9 \sqrt{3}}{32})}$ forms an optimal set of three-means with quantization error $V_3=\frac{199}{768}=0.259115$.
\end{prop}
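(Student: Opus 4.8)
The plan is to proceed as in the proof of Proposition~\ref{prop01}. Let $\ga=\set{P,Q,R}$ be an optimal set of three-means. By Proposition~\ref{prop0}, each Voronoi region $M(P|\ga),M(Q|\ga),M(R|\ga)$ has positive $P$-measure and $P$-null boundary, and $\tl p=E(X:X\in M(P|\ga))$, with the analogous statements for $\tl q$ and $\tl r$; thus each of $\tl p,\tl q,\tl r$ is the arc-length centroid of the corresponding region intersected with $L$. As in Proposition~\ref{prop01}, the regions meet $L$ in arcs (the perpendicular-bisector description used below makes this automatic), so $P,Q,R$ split $L$ into three arcs; write $D_1,D_2,D_3$ for the three points at which consecutive arcs meet, each lying on a side of the hexagon and described by a parameter in the parametrization $L_1,\dots,L_6$.

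To keep the case analysis finite I would exploit symmetry: both $L$ and $P$ are invariant under the reflection $\gs\colon(x_1,x_2)\mapsto(1-x_1,x_2)$ about the line $x_1=\frac12$ (which interchanges $O\leftrightarrow A_1$, $A_2\leftrightarrow A_5$, $A_3\leftrightarrow A_4$) and under the rotation by $\frac\pi3$ about $(\frac12,\frac{\sqrt3}2)$, hence under the full symmetry group of the hexagon. Up to this group and relabelling of $P,Q,R$ there are only finitely many possibilities for which three sides carry $D_1,D_2,D_3$ — more than the five cases of Proposition~\ref{prop01} but of exactly the same kind. In each case I would write $\tl p,\tl q,\tl r$ explicitly in terms of the parameters of $D_1,D_2,D_3$ via $\tl p=\Big(\int_{\gw{D_iD_j}}(x_1,x_2)\,dP\Big)/\Big(\int_{\gw{D_iD_j}}dP\Big)$ (using $dP=\frac16\,ds$ and $ds=dt$ on each $L_j$), impose along each Voronoi edge the canonical equations $\rho(\tl d_k,\tl p)-\rho(\tl d_k,\tl q)=0$ and its companions, solve the resulting algebraic system, discard inadmissible roots and roots that become an already-treated case after a hexagon symmetry, and compute $\int_L\min_{a\in\set{\tl p,\tl q,\tl r}}\|x-a\|^2\,dP$ for each admissible solution. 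Since by \cite{GL1} an optimal set exists and must satisfy Proposition~\ref{prop0}, the one with least distortion among these is optimal.

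The minimum occurs in the case invariant under $\gs$: the arc of one site, say $Q$, is symmetric about $x_1=\frac12$ and contains the midpoint $(\frac12,\sqrt3)$ of $A_3A_4$, while $P$ and $R=\gs(P)$ have mirror-image arcs that meet at the midpoint $(\frac12,0)$ of $OA_1$. Then $\tl q$ has first coordinate $\frac12$, $\tl r=\gs(\tl p)$, the Voronoi edge through $(\frac12,0)$ is the line $x_1=\frac12$, and the configuration depends on a single parameter $\gb$ fixing $D_1$ on $A_2A_3$ (with $D_2=\gs(D_1)$ on $A_4A_5$). Writing $\tl p(\gb),\tl q(\gb)$ by the centroid formula and solving the one remaining canonical equation $\rho(\tl d_1,\tl p)-\rho(\tl d_1,\tl q)=0$ gives $\gb=\frac12$, so $D_1=(\frac54,\frac{3\sqrt3}4)$, $\tl p=(\frac{37}{32},\frac{9\sqrt3}{32})$, $\tl q=(\frac12,\frac{15\sqrt3}{16})$, $\tl r=(-\frac5{32},\frac{9\sqrt3}{32})$; each of the three arcs has length $2$, and the three points form an equilateral triangle with circumcenter $(\frac12,\frac{\sqrt3}2)$, so the three perpendicular bisectors concur and the induced partition of $L$ is genuinely the Voronoi partition. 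Splitting $\int_L\min_a\|x-a\|^2\,dP$ over $L_1,\dots,L_6$ and integrating as in Lemma~\ref{lemma00} then gives $V_3=\frac{199}{768}$.

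The main obstacle is the bookkeeping in the second step: the number of combinatorial cases for the placement of $D_1,D_2,D_3$ is considerably larger than in Proposition~\ref{prop01}; each canonical equation is a genuine polynomial relation among the three parameters (each centroid coordinate being a ratio of a quadratic to an affine function of them); and, as in Cases~3--5 of Proposition~\ref{prop01}, several cases contribute spurious roots or roots that match an earlier case only after a hexagon symmetry, all of which must be handled carefully. Once every case has been checked and none beats the symmetric candidate, and that candidate is verified to satisfy Proposition~\ref{prop0}$(i)$--$(iii)$, the proposition follows.
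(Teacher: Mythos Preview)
Your approach is correct and is precisely the route the paper acknowledges but elects to avoid: the paper opens its proof by saying the proposition ``can be deduced by considering different cases as it was done in Proposition~\ref{prop01}'' and then, ``to avoid too much technicality,'' takes a shortcut. Instead of your full case analysis, the paper argues heuristically from the uniformity of $P$ that the three Voronoi arcs should have equal length, places the three cut points on the alternating sides $OA_1$, $A_2A_3$, $A_4A_5$ with independent parameters $\alpha,\beta,\gamma$, writes out $\tilde p,\tilde q,\tilde r$, solves the three canonical equations (obtaining $\alpha=\beta=\gamma\in\{0,\tfrac12,1\}$), and compares the three resulting distortions ($\tfrac{13}{48}$, $\tfrac{199}{768}$, $\tfrac{13}{48}$). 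Your plan trades this heuristic for the exhaustive combinatorial enumeration modulo the hexagon's symmetry group; in the winning case you further use the reflection $\sigma$ to reduce to a single parameter, whereas the paper keeps three parameters and solves a system. What you gain is rigor (no unproved ``equal arcs'' assumption); what the paper gains is brevity---the bookkeeping you flag as the main obstacle is exactly what the paper sidesteps.
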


 \begin{proof}
 The proof of the proposition can be deduced by considering different cases as it was done in Proposition~\ref{prop01}. To avoid too much technicality, we will prove it in a different way. Recall that the probability distribution is uniform on the boundary of the regular hexagon, and so we can assume that the Voronoi regions of the elements in an optimal set of three-means will partition the boundary of the hexagon into three equal parts. Let the points $P$, $Q$, and $R$ form an optimal set of three-means. Let the boundaries of the Voronoi regions cut the sides $OA_1$, $A_2A_3$, and $A_4A_5$ at the points $D$, $E$, and $F$ with parameters, respectively, given by $\ga$, $\gb$, and $\gg$. Let $P$, $Q$, and $R$ lie in the Voronoi regions that contain the vertices $A_1$, $A_3$, and $A_5$, respectively. Then, $\tilde p=E(X : X\in \gw{DE})$, $\tilde q=E(X : X \in \gw{EF})$, and $\tilde r=E(X : X \in \gw{FD})$. Then, proceeding as Case~1 of Proposition~\ref{prop01}, we have
 \begin{align*}
 \tilde p&=\Big(\frac{-2 \alpha ^2-\beta ^2+6 \beta +7}{-4 \alpha +4 \beta +8},-\frac{\sqrt{3} (\beta +1)^2}{4 (\alpha -\beta -2)}\Big),\\
 \tilde q&=\Big(\frac{\beta ^2-6 \beta -\gamma ^2+7}{-4 \beta +4 \gamma +8},\frac{\sqrt{3} \left(\beta ^2+2 \beta +\gamma ^2-4 \gamma -7\right)}{4 (\beta -\gamma -2)}\Big),\\
 \tilde r&=\Big(\frac{2 \alpha ^2+\gamma ^2-2}{4 (\alpha -\gamma +2)},\frac{\sqrt{3} (\gamma -2)^2}{4 (\alpha -\gamma +2)}\Big).
 \end{align*}
 Solving the canonical equations $\rho(\tilde d, \tilde r)-\rho(\tilde d, \tilde p)=0$, $\rho(\tilde e, \tilde p)-\rho(\tilde e, \tilde q)=0$, and $\rho(\tilde f, \tilde q)-\rho(\tilde f, \tilde r)=0$, we have three sets of solutions:
 \[\{\alpha \to 0,\beta \to 0,\gamma \to 0\}, \{\alpha \to \frac{1}{2},\beta \to \frac{1}{2}, \gamma \to \frac{1}{2}\}, \te{ and } \{\alpha \to 1,\beta \to 1,\gamma \to 1\}.\]
 If $\{\alpha \to 0,\beta \to 0,\gamma \to 0\}$, then $\tl p=(\frac{7}{8},\frac{\sqrt{3}}{8})$, $\tl q=(\frac{7}{8},\frac{7 \sqrt{3}}{8})$, and $\tl r=(-\frac{1}{4},\frac{\sqrt{3}}{2})$ yielding the distortion error
 \begin{align*} &\int_{L}\min_{a\in \set{\tilde p, \tilde q, \tl r}}\|x-a\|^2 dP=\frac{3}{6} \Big(\int_0^1 \rho((\tl t, 0), \tl p) \, dt+  \int_0^1 \rho((\frac{t}{2}+1,\frac{\sqrt{3} t}{2}), \tl p) \, dt\Big)=\frac{13}{48}.
\end{align*}
Similarly, if $\{\alpha \to \frac{1}{2},\beta \to \frac{1}{2}, \gamma \to \frac{1}{2}\}$, then $\tl p=(\frac{37}{32},\frac{9 \sqrt{3}}{32})$, $\tl q=(\frac{1}{2},\frac{15 \sqrt{3}}{16})$, and $\tl r=(-\frac{5}{32},\frac{9 \sqrt{3}}{32})$ yielding the distortion error $\frac{199}{768}$; and if $\{\alpha \to 1,\beta \to 1,\gamma \to 1\}$, then $\tl p=(\frac{5}{4},\frac{\sqrt{3}}{2})$, $\tl q=(\frac{1}{8},\frac{7 \sqrt{3}}{8})$, and $\tl r=(\frac{1}{8},\frac{\sqrt{3}}{8})$ with distortion error $\frac{13}{48}$. Since among the distortion errors, $\frac{199}{768}=0.259115<0.270833=\frac{13}{48}$, we can say that the set $\set{(\frac{37}{32},\frac{9 \sqrt{3}}{32}), (\frac{1}{2},\frac{15 \sqrt{3}}{16}), (-\frac{5}{32},\frac{9 \sqrt{3}}{32})}$ forms an optimal set of three-means with quantization error $V_3=\frac{199}{768}=0.259115$ (see Figure~\ref{Fig2}).
 \end{proof}

Proceeding in the similar way as Proposition~\ref{prop02}, the following two propositions can be proved.
 \begin{prop} \label{prop03}
 The set $\set{(\frac{17}{24},\frac{1}{8 \sqrt{3}}), (\frac{31}{24},\frac{5 \sqrt{3}}{8}), (\frac{7}{24},\frac{23}{8 \sqrt{3}}), (-\frac{7}{24},\frac{3 \sqrt{3}}{8})}$ forms an optimal set of four-means with quantization error $V_4=\frac{23}{144}=0.159722$ (see Figure~\ref{Fig2}) .
 \end{prop}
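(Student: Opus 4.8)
The plan is to follow the method of Proposition~\ref{prop02}. Since $P$ is uniform on the boundary $L$, one may assume, exactly as in that proof, that the four Voronoi regions of an optimal set of four-means partition $L$ into four arcs of equal arc length $\frac32$; up to the rotational symmetry of the hexagon this places the four cut points $D,E,F,G$ on the sides $OA_1$, $A_1A_2$, $A_3A_4$, $A_4A_5$ respectively, say at parameters $\ga,\gb,\gg,\gd\in[0,1]$, so that $\tl d=(\ga,0)$, $\tl e=(\frac\gb2+1,\frac{\sqrt3}2\gb)$, $\tl f=(1-\gg,\sqrt3)$, and $\tl g=(-\frac\gd2,\sqrt3-\frac{\sqrt3}2\gd)$. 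Let $P_1,P_2,P_3,P_4$ be the elements of the optimal set with $\tl p_1=E(X:X\in\gw{DE})$, $\tl p_2=E(X:X\in\gw{EF})$, $\tl p_3=E(X:X\in\gw{FG})$, and $\tl p_4=E(X:X\in\gw{GD})$.

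As in Case~1 of Proposition~\ref{prop01}, each $\tl p_i$ is obtained by integrating the parametrizations $L_1,\dots,L_6$ over the relevant subarcs and dividing by the corresponding arc length, which exhibits $\tl p_i$ as a rational function of its two bounding parameters with quadratic numerators and a linear denominator. By Proposition~\ref{prop0}, each cut point lies on the perpendicular bisector of the corresponding consecutive pair among $P_1,\dots,P_4$, giving the four canonical equations
\[\rho(\tl d,\tl p_4)-\rho(\tl d,\tl p_1)=0,\qquad \rho(\tl e,\tl p_1)-\rho(\tl e,\tl p_2)=0,\qquad \rho(\tl f,\tl p_2)-\rho(\tl f,\tl p_3)=0,\qquad \rho(\tl g,\tl p_3)-\rho(\tl g,\tl p_4)=0.\]
Clearing denominators turns this into a polynomial system in $\ga,\gb,\gg,\gd$; solving it (just as the analogous system in Proposition~\ref{prop02} produced $\{0,0,0\}$, $\{\tfrac12,\tfrac12,\tfrac12\}$, and $\{1,1,1\}$), one finds, among the admissible solutions, the branch $\ga=0$, $\gb=\tfrac12$, $\gg=0$, $\gd=\tfrac12$, which gives $\tl p_1=(\tfrac{17}{24},\tfrac1{8\sqrt3})$, $\tl p_2=(\tfrac{31}{24},\tfrac{5\sqrt3}8)$, $\tl p_3=(\tfrac7{24},\tfrac{23}{8\sqrt3})$, $\tl p_4=(-\tfrac7{24},\tfrac{3\sqrt3}8)$, together with possibly a few further solutions.

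For each admissible solution one then computes the distortion $\int_L\min_{1\leq i\leq4}\|x-\tl p_i\|^2\,dP=\tfrac16\sum_{j=1}^6\int_{L_j}\rho((x_1,x_2),\tl p_i)\,ds$, splitting a segment $L_j$ wherever a cut point lands on it and using $ds=dt$, exactly as in Proposition~\ref{prop02}. The branch above yields distortion $\tfrac{23}{144}=0.159722$, which is the smallest among the finitely many candidates (the remaining solutions giving strictly larger values, as happened for $n=3$). One also checks, by Proposition~\ref{prop0}, that the Voronoi regions of $\tl p_1,\dots,\tl p_4$ really are the arcs $\gw{DE},\gw{EF},\gw{FG},\gw{GD}$, so that no quantizer's region leaks past the chosen cut points and the configuration is genuinely admissible. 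Hence $\set{(\tfrac{17}{24},\tfrac1{8\sqrt3}),(\tfrac{31}{24},\tfrac{5\sqrt3}8),(\tfrac7{24},\tfrac{23}{8\sqrt3}),(-\tfrac7{24},\tfrac{3\sqrt3}8)}$ is an optimal set of four-means with $V_4=\tfrac{23}{144}$.

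The hard part is not any single computation but the two structural steps that Proposition~\ref{prop02} also disposes of quickly. The first is the reduction to the symmetric, equal-arc configuration: the $\pi$-rotational symmetry of the hexagon together with the uniformity of $P$ makes it the natural optimum, but making this rigorous -- in the style of the case analysis of Proposition~\ref{prop01}, ruling out every placement of the four cut points on a different combination of sides and every unequal partition -- is delicate. The second is the algebra: solving the polynomial canonical system in four unknowns yields several spurious roots, which must be discarded (those falling outside the ranges $[0,1]$, or those whose centroids violate the Voronoi condition of Proposition~\ref{prop0}) before the finitely many surviving distortions are compared. Both steps are routine in principle and run exactly parallel to the $n=3$ case, which is why the proposition can be established "in a similar way."
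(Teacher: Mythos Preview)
Your proposal is correct and follows essentially the same approach as the paper, which simply states that Proposition~\ref{prop03} is proved ``in the similar way as Proposition~\ref{prop02}'' without giving details. You have filled in exactly the expected argument: place four cut points on suitable sides, express the centroids $\tl p_1,\dots,\tl p_4$ as rational functions of the parameters, set up and solve the four canonical equations, and compare distortions---all of which parallels the paper's treatment of $n=3$, and your computed solution $\ga=0,\ \gb=\tfrac12,\ \gg=0,\ \gd=\tfrac12$ indeed reproduces the claimed quantizers and the value $V_4=\tfrac{23}{144}$.
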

\begin{prop} \label{prop04}
 The set $\set{(0.07095, 0.122889), (1.03737, 0.20269), (1.2881, 1.18696), \\(0.383892, 1.70901), (-0.343148, 0.99973)} $
 forms the optimal set of five-means with quantization error $V_5=0.10509 $(see Figure~\ref{Fig2}).
 \end{prop}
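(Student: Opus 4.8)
\textbf{Proof proposal for Proposition~\ref{prop04}.}

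The plan is to imitate the argument of Proposition~\ref{prop02} and Proposition~\ref{prop03}, with the added subtlety that $5$ does not divide $6$, so the five Voronoi regions cannot partition the boundary $L$ into congruent arcs. First I would use Proposition~\ref{prop0} together with the symmetry of the hexagon about the vertical line $x_1=\frac 12$: an optimal set of five-means can be taken invariant under this reflection, so exactly one point lies on the axis and the other four occur in two reflected pairs. This forces the combinatorial structure of the boundary partition --- one central Voronoi region straddling the axis, and two pairs of regions on either side --- and pins down which sides of the hexagon the five perpendicular bisectors must cross. I would then parametrize the crossing points by real parameters (using the parametric equations of $L_1,\dots,L_6$ given above), and write each candidate quantizer $\tilde p_k$ as the conditional expectation $E(X\mid X\in \gw{\cdot})$ by integrating $(x_1,x_2)$ against $dP=\frac16\,ds$ over the appropriate arc, exactly as in Case~1 of Proposition~\ref{prop01}.

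Next I would set up the system of canonical equations $\rho(\tilde d_i,\tilde p_{k})-\rho(\tilde d_i,\tilde p_{k+1})=0$ for each crossing point $\tilde d_i$ and its two adjacent quantizers, together with the reflection constraints. Solving this system (numerically, since the solution involves the non-dyadic numbers $0.07095,\dots$ appearing in the statement) yields the candidate set, and one checks it is a genuine fixed point of the Lloyd-type map $\alpha\mapsto$ (centroids of Voronoi regions of $\alpha$). Then I would compute the distortion $\int_L \min_{a\in\alpha}\|x-a\|^2\,dP = \frac16\sum_{k}\int_{\text{arc}_k}\rho((x_1,x_2),\tilde p_k)\,ds$ for this candidate, obtaining $0.10509$, and compare it against the distortions of the other critical configurations arising from different side-crossing patterns and from the boundary cases of the parameter ranges, showing that none gives a smaller value.

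The main obstacle is the case analysis for admissibility: unlike the $n=6k$ situation, here one must argue that the reflection-symmetric configuration with the specific side-crossing pattern is the only one that can be optimal --- in principle the central region could sit on a vertex rather than an edge, or the pairs could be arranged differently, and each alternative spawns its own system of canonical equations whose solutions must be produced and whose distortions must be shown to exceed $0.10509$. A secondary difficulty is that, since the optimal quantizers are not rational, the verification that the displayed numbers solve the fixed-point system and that the competing configurations are strictly worse has to be carried out with controlled numerical precision rather than in closed form; I would handle this by exhibiting the system explicitly and reporting the solution to enough digits that the strict inequality $0.10509<\frac{13}{48}$-type comparisons are unambiguous. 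The uniqueness up to the symmetry group of the hexagon then follows as in the previous propositions.
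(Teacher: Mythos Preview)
Your overall strategy (parametrize boundary crossings, write each quantizer as a conditional expectation, impose the canonical equations, then compare distortions across combinatorial cases) matches the paper's method. The gap is in the symmetry reduction you invoke at the outset.

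You assert that an optimal five-point set can be taken invariant under the reflection $x_1\mapsto 1-x_1$, i.e.\ about the line $x_1=\tfrac12$. This is false for the configuration in the statement, and in fact no optimal set of five-means has that symmetry. Check the listed points: none has $x_1=\tfrac12$, so none lies on your proposed axis. What the displayed set \emph{is} symmetric about is the vertex-to-vertex diagonal through $O$ and $A_3$ (the line $x_2=\sqrt 3\,x_1$): the point $(0.07095,0.122889)$ lies on that line, and reflection in it swaps $(1.03737,0.20269)\leftrightarrow(-0.343148,0.99973)$ and $(1.2881,1.18696)\leftrightarrow(0.383892,1.70901)$. Correspondingly, one of the five Voronoi boundary crossings sits exactly at the vertex $A_3$ (visible in Figure~\ref{Fig2}), a feature your $x_1=\tfrac12$ ansatz cannot produce.

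More generally, the step ``$P$ is $R$-invariant, hence an optimal $\alpha$ can be taken $R$-invariant'' is not valid for a specific reflection $R$: invariance of $P$ only tells you $R(\alpha)$ is again optimal, not that some optimizer is fixed by $R$. Here the stabilizer of each optimal set is generated by a reflection in a long diagonal, not in a midpoint-to-midpoint axis, so imposing symmetry about $x_1=\tfrac12$ leads you to a different (strictly worse) critical configuration and you would never reach $V_5=0.10509$. To repair the argument, either run the case analysis over side-crossing patterns without presupposing an axis (as in Proposition~\ref{prop01}), or redo your reduction with the correct diagonal axis and then separately verify that the midpoint-axis-symmetric stationary configurations have larger distortion.
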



The following theorem determines the optimal sets of $n$-means and the $n$th quantization errors for all positive integers $n$ of the form $n=6k$, where $k\in \D N$. It also helps us to determine the quantization dimension and the quantization coefficient for the uniform distribution defined on the boundary of the regular hexagon.

\begin{theorem} \label{Th41} Let $n\in \D N$ be such that $n=6k$ for some positive integer $k$. Then, the optimal set of $n$-means for $P$ is given by
\begin{align*} \ga_n&=\Big\{(\frac{r }{8},\frac{\sqrt{3} r }{8}),(1-\frac{r }{8},\frac{\sqrt{3} r }{8}),(\frac{6-r }{4},\frac{\sqrt{3}}{2}),(1-\frac{r }{8},-\frac{1}{8} \sqrt{3} (r -8)),\\
&(\frac{r }{8},-\frac{1}{8} \sqrt{3} (r -8)),(\frac{r -2}{4},\frac{\sqrt{3}}{2}) \Big\} \uu \gg\uu \mathop{\UU}\limits_{i=1}^5T_i(\gg),
\end{align*}
where $\gg:=\left\{\begin{array}{cc}
\set{r+\frac{2j-1}{2(k-1)}(1-2r) : j=1, 2, \cdots, (k-1)} &\te{ if }k\geq 2 \\
\es & k=1
\end{array}
\right.$,
and $r=\frac{2 (\sqrt{13} (k-1)-4)}{13 (k-1)^2-16}$, and  $T_i$ for $1\leq i\leq 5$, are five affine transformations on $\D R^2$, such that $T_i(OA_1)=A_iA_{i+1}$ for $1\leq i\leq 4$,  and $T_5(OA_1)=A_5O$.
The quantization error for $n$-means is given by
\[V_n=\frac{13 \left(13 (k-1)^2-8 \sqrt{13} (k-1)+16\right)}{12 \left(16-13 (k-1)^2\right)^2}.\]
\end{theorem}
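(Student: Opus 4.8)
\emph{Proof proposal.} The plan is to use the full symmetry of the hexagon to cut the problem down to a one–parameter minimization and then to perform that minimization explicitly.

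\emph{Step 1 (symmetry reduction).} Since $P$ is invariant under the symmetry group of the hexagon, one argues exactly as in the proof of Proposition~\ref{prop02} (the case $n=3$) that for $n=6k$ there is an optimal set $\ga_n$ whose Voronoi partition is invariant under the rotation $T_1$ by $\frac{\pi}{3}$ about the centre $(\frac12,\frac{\sqrt3}2)$ and under the reflections fixing a side. Such a partition of $L$ must consist of one \emph{corner region} straddling each of the six vertices, symmetric about that vertex and covering an arc of length $r$ along each incident side for a single common $r\in(0,\frac12]$, together with, on each of the six sides, $k-1$ further regions covering the remaining middle sub-arc of length $1-2r$ (for $k=1$ there are no middle regions and $2r=1$). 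The affine maps $T_1,\dots,T_5$ of the statement are the rotations by $\frac{i\pi}{3}$, and they carry the data on the side $OA_1$ to the data on the other five sides.

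\emph{Step 2 (the configuration for a fixed $r$).} Restricted to any straight sub-arc the measure $P$ is one–dimensional uniform, so by Proposition~\ref{prop0}$(iii)$ together with the standard one–dimensional fact that $m$ points covering an interval optimally subdivide it into $m$ equal pieces (each centred at its midpoint), the $k-1$ middle regions on $OA_1$ are the equal intervals of length $\ell:=\frac{1-2r}{k-1}$, whose centroids are the points of $\gg$ under the identification $t\mapsto(t,0)$; their images under $T_1,\dots,T_5$ give the middle regions on the other sides. The corner arc at $O$ is a piece of length $r$ along $(1,0)$ and a piece of length $r$ along $(-\frac12,\frac{\sqrt3}2)$, so its centroid lies on the bisector of the $120^\circ$ angle at $O$; a one–line integral gives it to be $(\frac r8,\frac{\sqrt3 r}{8})$, and its five rotates are precisely the remaining five listed points. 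Thus $\ga_n$ is completely determined by $r$.

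\emph{Step 3 (distortion as a function of $r$).} The second moment about the centroid of the corner arc at $O$ is $\int_0^r\big\|(t,0)-(\tfrac r8,\tfrac{\sqrt3 r}{8})\big\|^2\,dt+\int_0^r\big\|t(-\tfrac12,\tfrac{\sqrt3}2)-(\tfrac r8,\tfrac{\sqrt3 r}{8})\big\|^2\,dt=\tfrac{13r^3}{24}$ (the constant $\tfrac{13}{24}$ is where the factor $13$ enters), while each middle interval contributes $\tfrac{\ell^3}{12}$. Using $dP=\tfrac16\,ds$ and the six-fold symmetry,
\[V_n=\frac16\Big(6\cdot\frac{13r^3}{24}+6(k-1)\cdot\frac{\ell^3}{12}\Big)=\frac{13r^3}{24}+\frac{(1-2r)^3}{12(k-1)^2}.\]
Then $\frac{dV_n}{dr}=\frac{13r^2}{8}-\frac{(1-2r)^2}{2(k-1)^2}$; setting this to zero and taking the root in $(0,\frac12]$ gives $\sqrt{13}\,r(k-1)=2(1-2r)$, i.e. $r=\frac{2}{\sqrt{13}(k-1)+4}=\frac{2(\sqrt{13}(k-1)-4)}{13(k-1)^2-16}$, which is the claimed value ($r=\frac12$ when $k=1$); one checks it is a genuine minimum. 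Finally, substituting $(1-2r)^2=\frac{13r^2(k-1)^2}{4}$ collapses $V_n$ to $\frac{13r^2}{48}$, and inserting $r$ and rationalizing, using that $\big(13(k-1)^2+8\sqrt{13}(k-1)+16\big)\big(13(k-1)^2-8\sqrt{13}(k-1)+16\big)=(13(k-1)^2-16)^2=(16-13(k-1)^2)^2$, yields
\[V_n=\frac{13\big(13(k-1)^2-8\sqrt{13}(k-1)+16\big)}{12\big(16-13(k-1)^2\big)^2}.\]

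\emph{Main obstacle.} The only genuinely delicate point is Step~1: rigorously justifying that some optimal set is symmetric with exactly one corner-straddling region per vertex — in particular that no optimal quantizer sits on a vertex and that no "middle" region straddles a corner. The paper treats this essentially as granted, just as in the $n=3$ case; a fully airtight version would need an exchange/averaging argument showing that every competing Voronoi combinatorial type has strictly larger distortion. It is also worth recording the consistency check that the canonical equation $\rho((r,0),\tl p_{\text{corner}})=\rho((r,0),\tl p_{\text{mid}})$ between the corner point near $O$ and the adjacent middle point reads $\frac{\sqrt{13}}{4}r=\frac{\ell}{2}$, which is exactly the first-order condition $\frac{dV_n}{dr}=0$; this confirms that the displayed $\ga_n$ really is a centroidal Voronoi configuration and not merely the best configuration within the posited family.
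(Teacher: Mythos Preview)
Your proposal is correct and follows essentially the same route as the paper: assume the symmetric ``six corner regions plus $k-1$ equal middle intervals per side'' structure, express the distortion as a one–variable function of the corner half–length $r$, and minimize; your formula $V_n=\tfrac{13r^3}{24}+\tfrac{(1-2r)^3}{12(k-1)^2}$ is algebraically identical to the paper's $\tfrac{1}{24(k-1)^2}\big(r^3(13(k-1)^2-16)+24r^2-12r+2\big)$, and the minimization and final substitution are the same. The only cosmetic differences are that the paper treats $k=1$ separately via canonical equations (finding the candidates $r=0,\tfrac12,1$ and comparing) rather than as the degenerate case $2r=1$, and that you add the pleasant observation that the Voronoi canonical equation between a corner point and its neighbouring middle point is equivalent to the first–order condition $dV_n/dr=0$; the structural assertion in your Step~1 is left at the same heuristic level in the paper (``it is not difficult to show'').
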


\begin{proof} Let us first prove the theorem for $n=6$, i.e., when $k=1$.
Recall that the probability distribution is uniform on the boundary of the regular hexagon, and so we can assume that the Voronoi regions of the elements in an optimal set of six-means will partition the boundary of the hexagon into six equal parts. Let the boundaries of the Voronoi regions cut the sides $OA_1, A_1A_2, A_2A_3, A_3A_4, A_4A_5$, and $A_5O$ of the hexagon given in Figure~\ref{Fig1} at the points $D_1, D_2, \cdots, D_6$, respectively. Then, we must have $|OD_1|=|A_1D_2|=|A_2D_3|=\cdots=|A_5D_6|$ which equals $r$, say, where $0\leq r \leq 1$. Let $\set{\tilde p_1, \tilde p_2, \cdots, \tilde p_6}$ forms an optimal set of six-means, where
 $\tilde p_1=E(X : X\in \gw{D_1D_2}), \, \tilde p_2=E(X : X\in \gw{D_2D_3}), \cdots, \tilde p_5=E(X : X\in \gw{D_5D_6}), \te{ and } \tilde p_6=E(X : X\in \gw{D_6D_1})$.
  Then, we have
  \[\tilde p_1=\frac{\int_r^1 (t, 0) \, dt+\int_0^r (\frac{t}{2}+1, \frac{\sqrt{3} t}{2}) \, dt}{\int_r^1 1 \, dt+\int_0^r 1 \, dt}=(-\frac{r^2}{4}+r+\frac{1}{2},\frac{\sqrt{3} r^2}{4}).\]
  Similarly, $\tilde p_2=(\frac{1}{4} \left(-2 r^2+2 r+5\right),\frac{1}{4} \sqrt{3} (2 r+1))$, and so on.
Solving the canonical equations $\rho(\tilde d_2, \tilde p_1)-\rho(\tilde d_2, \tilde p_2)=0$, we have $r=0, \frac 12, 1$. If $r=0$, then we see that $D_1, D_2\cdots, D_6$ coincide with $O, A_1, A_2, \cdots, A_5$, respectively. On the other hand, if $r=1$, then $D_1, D_2\cdots, D_6$ coincide with $A_1, A_2, \cdots, A_5, O$, respectively. Thus, if $r=0$, or if $r=1$, then the optimal set of six-means consists of the midpoints of the sides of the hexagon, and hence, the distortion error for six-means in these two cases is given by
\begin{equation*} 6\int_0^1 \rho((t, 0), (\frac{1}{2},0))\, dP=\int_0^1 \rho((t, 0), (\frac{1}{2},0)) \,dt=\frac 1 {12}.
\end{equation*}
If $r=\frac 12$, then $D_1, D_2\cdots, D_6$ coincide with the midpoints of the sides of $OA_1, A_1A_2, \cdots, A_5O$, respectively, yielding
$\tilde p_1=(\frac{15}{16},\frac{\sqrt{3}}{16}), \tilde p_2=(\frac{11}{8},\frac{\sqrt{3}}{2}), \tilde p_3=(\frac{15}{16},\frac{15 \sqrt{3}}{16}), \tilde p_4= (\frac{1}{16},\frac{15 \sqrt{3}}{16}), \tilde p_5=(-\frac{3}{8},\frac{\sqrt{3}}{2})$, and $\tilde p_6=(\frac{1}{16},\frac{\sqrt{3}}{16})$ (see Figure~\ref{Fig2}), and the corresponding distortion error is given by
  \begin{equation*} 6\Big(\int_\frac 12 ^1 \rho((t, 0), \tilde p_1)\, dP+\int_0^{\frac 12} \rho((\frac{t}{2}+1, \frac{\sqrt{3} t}{2}), \tilde p_1), dP\Big)=\frac{13}{192}.
\end{equation*}
Since $\frac{13}{192}<\frac 1 {12}$, the set $\set{\tilde p_1, \tilde p_2, \cdots, \tilde p_6}$ obtained for $r=\frac 12$, gives the optimal set of six-means with quantization error  $V_6=\frac{13}{192}$.
Notice that for $k=1$, we have $r=\frac 12$, and $\gg=\es$. Thus, the statement of the proposition is true for $n=6$, i.e., when $k=1$. Let $n=6k$ for $k\geq 2$. Then, as the hexagon is a regular polygon, and the probability distribution $P$ is uniform, it is not difficult to show that the optimal set $\ga_n$ contains six elements from each of the interior angles,  $k-1$ elements from each side of the hexagon.
Let $a, b, c, d, e, f$ be the six points that $\ga_n$ contains from the interior of the angles $\angle O, \angle A_1, \angle A_2, \angle A_3, \angle A_4, \angle A_5$, respectively. As the optimal set $\ga_n$ contains $k-1$ elements from each side of the hexagon, and $P$ is a uniform probability distribution, the Voronoi regions of $a, b, c, d, e, f$ will form isosceles triangles $P$-almost surely. Let the length of each of the two equal sides of the isosceles triangle formed by the Voronoi regions equal $r$. Then,
\[a=\frac{\int_{1-r}^1 (\frac{t-1}{2},\frac{1}{2} \sqrt{3} (1-t)) \, dt+\int_0^r (t,0)\, dt}{\int_{1-r}^1 1 \, dt+\int_0^r 1 \, dt}=(\frac{r}{8},\frac{\sqrt{3} r}{8}).\]
Similarly,
\begin{align*} b=(1-\frac{r }{8},\quad &\frac{\sqrt{3} r }{8}), \quad   c=(\frac{6-r }{4},\frac{\sqrt{3}}{2}), \quad d=(1-\frac{r }{8},-\frac{1}{8} \sqrt{3} (r -8)), \\
&e=(\frac{r }{8},-\frac{1}{8} \sqrt{3} (r -8)), \te{ and } f=(\frac{r -2}{4},\frac{\sqrt{3}}{2}).
 \end{align*}Let $\gg$ be the set of all the $k-1$ points that $\ga_n$ contains from the side $OA$. Then, the Voronoi regions of the points in $\gg$ covers the closed interval $[r, 1-r]$ yielding
\[\gg=\Big\{\Big(r+\frac{2j-1}{2(k-1)}(1-2r), 0\Big) : j=1, 2, \cdots, (k-1)\Big\}.\]
Let $T_i$ for $1\leq i\leq 5$ be the affine transformations on $\D R^2$ as given in the hypothesis. Then, the set of points that $\ga_n$ contains from the sides $A_iA_j$ is $T_i(\gg)$, and the set of points that $\ga_n$ contains from the side $A_5O$ is $T_5(\gg)$, yielding
\begin{align*} \ga_n=\{a, b, c, d, e, f\}\uu \gg\uu \mathop{\UU}\limits_{i=1}^5T_i(\gg).
\end{align*}
Using the symmetry, the quantization error for $n$-means is obtained as
\begin{align*}
V_n&=6(\te{quantization error due to the points  $a$ and the $(k-1)$ points in $\gg$})\\
&=\frac 6 6\Big(\int_{1-r}^1 \rho((\frac{t-1}{2},\frac{1}{2} \sqrt{3} (1-t)),a) \, dt+\int_0^r \rho((t, 0),a) \, dt\\
&\qquad \qquad +(k-1) \Big(\int_{r }^{r +\frac{1-2 r }{(k-1)}} \rho\Big((t, 0), (r+\frac{1}{2(k-1)}(1-2r), 0)\Big) dt\Big)\\
&=\frac 1{24(k-1)^2}\Big(24 r ^2-12 r +r ^3 \left(13 (k-1)^2-16\right)+2\Big).
\end{align*}
Notice that for a given $k$, the quantization error $V_n$ is a function of $r$. Solving $\frac{\pa V_n}{\pa r}=0$, we have $r=\frac{2 (\sqrt{13} (k-1)-4)}{13 (k-1)^2-16}$. Putting $r=\frac{2 (\sqrt{13} (k-1)-4)}{13 (k-1)^2-16}$, we have
\[V_n=\frac{13 \left(13 (k-1)^2-8 \sqrt{13} (k-1)+16\right)}{12 \left(16-13 (k-1)^2\right)^2}.\]
Thus, the proof of the theorem is complete.
\end{proof}

\begin{remark} By Proposition~\ref{prop01}, we see that the boundary of the Voronoi regions of the optimal set of two-means bisects the two opposite sides of the hexagon. Due to rotational symmetry, there are three different optimal sets of two-means. Proposition~\ref{prop02} implies that the points in an optimal set of three-means form an equilateral triangle. Due to rotational symmetry, there are two different optimal sets of three-means. Proposition~\ref{prop03} yields that the points in an optimal set of four-means form a rectangle of side lengths $\frac { 7} 6$ and $\frac 2{\sqrt 3}$. Due to rotational symmetry, there are three different optimal sets of four-means. Proposition~\ref{prop04} implies that there are five different optimal sets of five-means.
Theorem~\ref{Th41} implies that an optimal set of $n$-means, where $n=6k$ for some $k\in \D N$, is unique. We observe that the points in an optimal set of six-means form a regular hexagon of side length $\frac { 7} 8$. On the other hand, if $n$ is of the form $n=6k+m$, where $1\leq m\leq 5$, then we can show that an optimal set of $n$-means contains six elements from each of the interior angles, $k$ elements from each of $m$ sides, $k-1$ elements from each of the remaining $6-m$ sides, and the number of such sets is $^6C_m$. The proof being too technical, we skip the proof of it in the paper.
\end{remark}

\begin{prop} \label{prop000}
For sufficiently large $n$ the points in an optimal set of $n$-means lie on the boundary of the hexagon.
\end{prop}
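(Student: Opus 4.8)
The plan is to deduce membership in $L$ from the self-consistency condition of Proposition~\ref{prop0}$(iii)$ together with the fact that $L$ is a union of six straight segments $L_j$. For any $a$ in an optimal set $\ga_n$ we have $a=E(X:X\in M(a|\ga_n))$, and since $P$ is the uniform (arc length) distribution on $L$, this $a$ is exactly the arc-length centroid of the arc $L\ii M(a|\ga_n)$. The decisive elementary remark is that the centroid of any arc contained in a \emph{single} side $L_j$ again lies on $L_j$: a straight segment is one-dimensional and convex, so the average of points lying on it stays on it. Hence it suffices to prove that, for all sufficiently large $n$, the arc $L\ii M(a|\ga_n)$ attached to each $a\in\ga_n$ is contained in a single side of the hexagon.

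First I would bound the size of the cells. Theorem~\ref{Th41} shows that $n^2V_n$ tends to a finite positive constant along $n=6k$, and since $n\mapsto V_n$ is nonincreasing the sandwich $V_{6(k+1)}\le V_n\le V_{6k}$ for $6k\le n\le 6(k+1)$ upgrades this to $n^2V_n\asymp 1$ for every $n$. Next I would show that $V_n\to 0$ forces $\max_{a\in\ga_n}\te{length}(L\ii M(a|\ga_n))\to 0$: if one cell captured an arc of length at least a fixed $\ell_0>0$, then, because $a$ is the centroid of that arc, the arc alone would contribute to $V_n$ at least the variance of the uniform distribution on a segment of length $\ell_0$ about its midpoint, a fixed positive constant, contradicting $V_n\to 0$. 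Feeding in $V_n=O(n^{-2})$ then yields the quantitative statement that every captured arc has length $O(n^{-1})$.

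Each side of the hexagon has length one, and the only points at which $L$ fails to be locally straight are the six vertices $O,A_1,\dots,A_5$. Once the maximal captured arc length has dropped below one, every arc $L\ii M(a|\ga_n)$ meets at most one vertex. If such an arc avoids all vertices it lies in the interior of a single side $L_j$, so by the centroid remark of the first paragraph its quantizer lies on $L_j\ci L$; this places on the boundary every $a\in\ga_n$ whose cell does not capture a vertex. The remaining cells, at most one per vertex, are the obstacle: their arc straddles two adjacent sides, and for these the centroid is pulled off $L$ along the interior angle bisector by an amount of order $n^{-1}$ (the distance from $L$ being $O(r)$, with $r=O(n^{-1})$ from Theorem~\ref{Th41}). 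To force these quantizers onto $L$ the plan is a local variational comparison at a fixed vertex: holding all other quantizers fixed, weigh the corner-straddling configuration against the competing one in which the vertex is the common boundary point of the two neighbouring cells, each of the two adjacent quantizers then sitting on its own side, and use the fixed exterior angle $\tfrac{\pi}{3}$ at every vertex together with the $O(n^{-1})$ smallness of the arcs to compare the two distortions to leading order in $n^{-1}$.

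I expect this last vertex comparison to be the crux and the only delicate point: the first two paragraphs are routine consequences of Proposition~\ref{prop0} and the asymptotics already supplied by Theorem~\ref{Th41}, whereas correctly tracking which neighbouring arcs shift, and pinning down the sign of the resulting change in distortion at leading order in $n^{-1}$, is where the genuine difficulty lies. This step is precisely what separates the easy assertion that all \emph{non-corner} quantizers lie on $L$ from the full statement for every point of an optimal set.
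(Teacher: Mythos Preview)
Your plan hinges on the final ``vertex comparison'' step, in which you hope to show that a corner-straddling cell is \emph{suboptimal} compared with the configuration where the vertex is a Voronoi boundary point and both neighbouring quantizers sit on their own sides. This comparison goes the wrong way. Theorem~\ref{Th41} computes the optimal set for $n=6k$ explicitly, and the six corner quantizers there---for instance $a=(\tfrac{r}{8},\tfrac{\sqrt{3}r}{8})$ with $r=\tfrac{2(\sqrt{13}(k-1)-4)}{13(k-1)^2-16}$---lie on the interior angle bisectors, strictly \emph{off} $L$ for every finite $k$. In fact the very comparison you propose is carried out in the proof of Theorem~\ref{Th41} for $k=1$: putting the Voronoi boundaries through the vertices (the case $r=0$) gives distortion $\tfrac{1}{12}$, while the straddling configuration ($r=\tfrac12$) gives the strictly smaller $\tfrac{13}{192}$. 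So your variational step cannot force the corner quantizers onto $L$; the literal statement ``every point of $\ga_n$ lies on $L$ for large finite $n$'' is false for the corner points.

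The paper's own argument reads the proposition asymptotically: it simply feeds the explicit formula for the corner quantizers from Theorem~\ref{Th41} into a limit and observes, e.g., $a_{6\ell(n)}\to(0,0)=O$ as $\ell(n)\to\infty$, with the analogous limits at the other vertices; the remaining $n-6$ quantizers are already on $L$ by construction. Thus the content is that the six off-boundary points tend to the vertices, not that they ever sit exactly on $L$. Your first two paragraphs (cell-size bounds and the observation that non-corner quantizers lie on $L$) are correct and recover the ``rest of the points'' part of the paper's proof by a different route, but the decisive corner analysis must be replaced by a convergence statement rather than an exact one.
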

\begin{proof} For large $n$ there exists a unique positive integer $\ell(n)$ such that
\begin{equation} \label{eq1} 6\ell(n)\leq n<6(\ell(n)+1).\end{equation}
Let $a_n, b_n, c_n, d_n, e_n, f_n$ be the six points that an optimal set $\ga_n$ contains from the interior, respectively, of the angles $\angle O, \angle A_1, \angle A_2, \angle A_3, \angle A_4, \angle A_5$. By Theorem~\ref{Th41}, we have
\[a_{6\ell(n)}=\left(\frac{\sqrt{13} (\ell(n)-1)-4}{4 \left(13 (\ell(n)-1)^2-16\right)},\frac{\sqrt{3} \left(\sqrt{13} (\ell(n)-1)-4\right)}{4 \left(13 (\ell(n)-1)^2-16\right)}\right) \te{ implying } \lim_{\ell(n)\to \infty} a_{6\ell(n)}=(0, 0).\]
Similarly, $\mathop{\lim}\limits_{\ell(n)\to \infty} a_{6(\ell(n)+1)}=(0, 0)$. Thus, \eqref{eq1} implies that $\mathop{\lim}\limits_{n\to \infty} a_n=(0, 0)=O$. Similarly, we can show that if $n\to \infty$, then $b_n\to A_1$, $c_n\to A_2$, $d_n\to A_3$, $e_n\to A_4$, and $f_n\to A_5$. The rest of the points in $\ga_n$ are already on the boundary of the hexagon. Thus, the proof of the proposition is complete.
\end{proof}

\begin{prop} \label{prop55}
Quantization dimension $D(P)$ of the uniform distribution $P$ defined on the boundary of the regular hexagon equals the dimension of the boundary of the hexagon. Moreover, the quantization coefficient exists as a finite positive number which equals $3$.
\end{prop}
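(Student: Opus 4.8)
The plan is to extract the asymptotics of $V_n$ directly from the closed-form expression in Theorem~\ref{Th41}, and then pass from the subsequence $n=6k$ to all $n$ by the monotonicity of $n\mapsto V_n$.

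First I would analyze $V_{6k}$ as $k\to\infty$. Writing $m=k-1$, the formula in Theorem~\ref{Th41} has numerator a quadratic in $m$ with leading term $169m^2$ and denominator $12\,(16-13m^2)^2$ with leading term $12\cdot 169\,m^4$; comparing leading coefficients gives $\lim_{k\to\infty}(k-1)^2V_{6k}=\frac1{12}$, hence also $\lim_{k\to\infty}k^2V_{6k}=\frac1{12}$. Since $n=6k$ means $n^2=36k^2$, this already yields $n^2V_n\to 3$ along the subsequence $n\in 6\D N$.

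Next I would remove the restriction $n\in 6\D N$. Since a set of at most $n$ points is in particular a set of at most $n+1$ points, the sequence $(V_n)$ is non-increasing; so for $6k\le n<6(k+1)$ we have $V_{6(k+1)}\le V_n\le V_{6k}$ and therefore
\[36\,k^2\,V_{6(k+1)}\ \le\ n^2V_n\ \le\ 36\,(k+1)^2\,V_{6k}.\]
As $n\to\infty$ both $k\to\infty$ and $k+1\to\infty$, so by the previous step each of the two bounds converges to $36\cdot\frac1{12}=3$; the squeeze theorem then gives $\lim_{n\to\infty}n^2V_n=3$.

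Finally, the two assertions follow at once. The $1$-dimensional quantization coefficient is by definition $\lim_{n\to\infty}n^{2/1}V_n=\lim_{n\to\infty}n^2V_n=3\in(0,\infty)$. For the quantization dimension, $n^2V_n\to 3$ gives $-\log V_n=2\log n-\log 3+o(1)$, whence $D(P)=\lim_{n\to\infty}\frac{2\log n}{-\log V_n}=1$; equivalently one may invoke the standard fact from~\cite{GL1} that a finite positive $s$-dimensional quantization coefficient forces $D(P)=s$. Since the boundary of the regular hexagon is a curve, it is $1$-dimensional, so $D(P)$ equals the dimension of the object. I do not anticipate any real obstacle here: the explicit formula for $V_{6k}$ does all the work, and the only mild subtlety is the passage to arbitrary $n$, which the monotonicity sandwich handles precisely because $V_{6k}$ and $V_{6(k+1)}$ are asymptotically equivalent.
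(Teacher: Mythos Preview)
Your proof is correct and follows essentially the same approach as the paper: both exploit the explicit formula for $V_{6k}$ from Theorem~\ref{Th41} together with the monotonicity sandwich $V_{6(k+1)}\le V_n\le V_{6k}$ for $6k\le n<6(k+1)$. The only cosmetic difference is that the paper runs separate squeeze arguments for $\frac{2\log n}{-\log V_n}$ and for $n^2V_n$, whereas you more efficiently establish $n^2V_n\to 3$ first and read off $D(P)=1$ from that.
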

\begin{proof} For $n\in \D N$, $n\geq 6$, let $\ell(n)$ be the unique positive integer such that $6\ell(n)\leq n<6(\ell(n)+1)$. Then,
$V_{6(\ell(n)+1)}<V_n\leq V_{6\ell(n)}$ implying
\begin{align} \label{eq45}
\frac {2 \log (6\ell(n))}{-\log V_{6(\ell(n)+1)}}<\frac {2\log n}{-\log V_n} <\frac{2 \log (6(\ell(n)+1))}{-\log V_{6\ell(n)}}.
\end{align}
Notice that
\[\lim_{n\to \infty} \frac {2 \log (6\ell(n))}{-\log V_{6(\ell(n)+1)}}=\underset{\ell(n)\to \infty }{\text{lim}}\frac{2 \log (6 \ell(n))}{-\log \left(\frac{13 \left(13 \ell(n)^2-8 \sqrt{13} \ell(n)+16\right)}{12 \left(16-13 \ell(n)^2\right)^2}\right)}=1,\]
and \[\lim_{n\to \infty} \frac{2 \log (6(\ell(n)+1))}{-\log V_{6\ell(n)}}=\underset{\ell(n)\to \infty }{\text{lim}}\frac{2 \log (6 (\ell(n)+1))}{-\log \left(\frac{13 \left(13 (\ell(n)-1)^2-8 \sqrt{13} (\ell(n)-1)+16\right)}{12 \left(16-13 (\ell(n)-1)^2\right)^2}\right)}=1\]
and hence, by \eqref{eq45}, $\mathop{\lim}\limits_{n\to \infty} \frac {2\log n}{-\log V_n}=1$ which is the dimension of the underlying space. Again,
\begin{equation} \label{eq46} (6\ell(n))^2V_{6(\ell(n)+1)}<n^2 V_n<(6(\ell(n)+1))^2V_{6\ell(n)}.
\end{equation}
We have
\[\lim_{n \to \infty} (6\ell(n))^2V_{6(\ell(n)+1)}=\underset{\ell(n)\to \infty }{\text{lim}}(6\ell(n))^2\frac{13 \left(13 \ell(n)^2-8 \sqrt{13} \ell(n)+16\right)}{12 \left(16-13 \ell(n)^2\right)^2}=3,\]
and
\[\lim_{n \to \infty} (6(\ell(n)+1))^2V_{6\ell(n)}=\underset{\ell(n)\to \infty }{\text{lim}}(6(\ell(n)+1))^2 \frac{13 \left(13 (\ell(n)-1)^2-8 \sqrt{13} (\ell(n)-1)+16\right)}{12 \left(16-13 (\ell(n)-1)^2\right)^2}=3,\]
and hence, by \eqref{eq46} we have
$\mathop{\lim}\limits_{n\to\infty} n^2 V_n=3$, i.e., the quantization coefficient exists as a finite positive number which equals $3$.
Thus, the proof of the proposition is complete.
\end{proof}

 \begin{remark}
 Proceeding in the similar way, as it is done for the unform distribution on the boundary of the regular hexagon, we can determine the optimal sets of $n$-means and the $n$th quantization error for any positive integer $n$ for the uniform distribution on the boundary of any regular $m$-sided polygon for $m\geq 3$.
 \end{remark}

 \section{Quantization for a mixed uniform distribution on a semicircular curve} \label{sec2}
The basic definitions and notations that were defined in the first paragraph of Section~\ref{sec1} are also true in this section.

We need the following proposition which generalizes a theorem in \cite{RR2}.
\begin{prop} \label{prop32}
Let $\ga_n$ be an optimal set of $n$-means for a uniform distribution on the unit circular arc $S$ given by
\[S:=\set{(\cos \gq, \sin \gq) : \ga\leq \gq\leq \gb},\]
where $0\leq \ga<\gb\leq 2\pi$. Then,
\[\ga_n:=\Big\{ \frac {2n}{\gb-\ga} \sin(\frac{\gb-\ga}{2n}) \Big(\cos \Big(\ga+(2j-1){\frac{\gb-\ga}{2n}}\Big), \   \sin  \Big(\ga+(2j-1){\frac{\gb-\ga}{2n}}\Big)\Big) : j=1, 2, \cdots, n \Big  \} \]
forms an optimal set of $n$-means, and the corresponding quantization error is given by
\[V_n=1-\frac{4n^2}{(\ga-\gb)^2} \sin^2\frac{\ga-\gb}{2n}.\]
\end{prop}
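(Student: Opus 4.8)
The plan is to reduce the problem to a one-dimensional computation by exploiting the rotational symmetry of the arc and the uniform distribution, and then to use the characterization in Proposition~\ref{prop0}. First I would set up coordinates along the arc: parametrize $S$ by the angle $\gq \in [\ga, \gb]$, so that the uniform distribution $P$ on $S$ pushes forward to the uniform distribution on $[\ga, \gb]$ with density $\frac{1}{\gb-\ga}$. Since the diameter of $S$ is bounded, the second moment $\int \|x\|^2\, dP$ is finite, so by the remarks in Section~\ref{sec1} an optimal set of $n$-means exists and has exactly $n$ elements. The key structural claim is that the Voronoi regions of an optimal set must partition the arc $S$ into $n$ connected sub-arcs of equal $P$-measure, hence equal angular length $\frac{\gb-\ga}{n}$; this is where Proposition~\ref{prop0} enters, together with a convexity/symmetry argument showing that each optimal quantizer lies on the perpendicular bisector from the center through the midpoint of its sub-arc, and that the "balanced" equal-length configuration is the unique critical configuration. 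Here I would argue that any two adjacent quantizers force their common Voronoi boundary to be an angular bisector (the perpendicular bisector of two points equidistant from the origin passes through the origin), so the boundary arcs are determined by the angular positions, and then the stationarity condition $a = E(X \mid X \in M(a\mid\ga))$ pins down both the sub-arc lengths and the quantizer locations.

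Next I would compute, for a single sub-arc of angular length $2\phi$ centered at angle $\psi$ (so $\phi = \frac{\gb-\ga}{2n}$), the conditional expectation. A direct integration gives
\begin{align*}
E\Big(X \mid X\in \{(\cos\gq,\sin\gq): \psi-\phi\leq\gq\leq\psi+\phi\}\Big) &= \frac{1}{2\phi}\int_{\psi-\phi}^{\psi+\phi}(\cos\gq,\sin\gq)\,d\gq\\
&= \frac{\sin\phi}{\phi}\,(\cos\psi,\sin\psi).
\end{align*}
Taking $\psi = \ga + (2j-1)\phi$ for $j=1,\dots,n$ and substituting $\phi = \frac{\gb-\ga}{2n}$ yields exactly the points listed in the statement, with radial factor $\frac{\sin\phi}{\phi} = \frac{2n}{\gb-\ga}\sin\frac{\gb-\ga}{2n}$. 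For the quantization error, by the decomposition $E\|X-a\|^2 = \mathrm{Var} + \|a - E(X\mid\cdot)\|^2$ applied on each Voronoi cell (the Voronoi quantizer being the conditional mean), $V_n$ equals the sum over the $n$ cells of the conditional variance; by symmetry this is $n$ times the contribution of one cell, namely
\[
V_n = \frac{n}{2\phi}\int_{-\phi}^{\phi}\Big\| (\cos\gq,\sin\gq) - \tfrac{\sin\phi}{\phi}(1,0)\Big\|^2 d\gq = 1 - \frac{\sin^2\phi}{\phi^2},
\]
which, after substituting $\phi=\frac{\gb-\ga}{2n}$, is $1 - \frac{4n^2}{(\ga-\gb)^2}\sin^2\frac{\ga-\gb}{2n}$, as claimed.

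The step I expect to be the main obstacle is rigorously establishing that the balanced, equal-arc configuration is genuinely optimal rather than merely a critical point: one must rule out optimal sets whose Voronoi cells are unequal in measure, and one must confirm that all points of an optimal set actually lie on the arc's convex hull in the right cyclic order (no two quantizers "share" an arc, no quantizer is redundant). I would handle this by the standard argument that on a connected one-dimensional carrier each Voronoi cell of an optimal set is an interval, that the induced map from cell-length vectors to total distortion is, after eliminating the quantizer positions via the conditional-mean condition, a strictly convex (or at least symmetric with a unique critical point) function of the cut points on the simplex $\{\sum \text{lengths} = \gb-\ga\}$, so the symmetric point is the unique minimizer. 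This is the same mechanism that was invoked implicitly in Proposition~\ref{prop02} and Theorem~\ref{Th41} for the hexagon, and it transfers to the circular arc because the perpendicular-bisector boundaries between radially equidistant points are exactly the angular bisectors; I would cite \cite{RR2} and \cite{GL1} for the one-dimensional uniqueness input and note that the present statement generalizes the circular case there from the full circle to an arbitrary arc.
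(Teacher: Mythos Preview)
Your proposal is correct and follows exactly the route the paper takes: the paper's own proof consists of a single sentence observing that the density is $\frac{1}{\gb-\ga}$ on $S$ and then deferring entirely to the analogous theorem in \cite{RR2}, which is precisely the computation (equal sub-arcs, conditional mean $\frac{\sin\phi}{\phi}(\cos\psi,\sin\psi)$, resulting error $1-\frac{\sin^2\phi}{\phi^2}$) you have spelled out. One minor slip: in your displayed expression for $V_n$ the prefactor should be $\frac{1}{2\phi}$ rather than $\frac{n}{2\phi}$ (the density is $\frac{1}{2n\phi}$ and the $n$ cells cancel the $n$), though your final formula is correct.
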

\begin{proof}
Notice that $S$ is an arc of the unit circle $x_1^2+x_2^2=1$ which subtends a central angle of $\gb-\ga$ radian, and the probability distribution is uniform on $S$. Hence, the density function is given by $f(x_1, x_2)=\frac 1{\gb-\ga}$ if $(x_1, x_2)\in S$, and zero, otherwise. Thus, the proof follows in the similar way as a similar theorem in \cite{RR2}.
\end{proof}

Let $L$ be the boundary of the semicircular disc $x_1^2+x_2^2=1$, where $x_2\geq 0$. Let the base of the semicircular disc be $AOB$, where $A$ and $B$ have the coordinates $(-1, 0)$ and $(1,0$), and $O$ is the origin $(0, 0)$. Let $s$ represent the distance of any point on $L$ from the origin tracing along the boundary $L$ in the counterclockwise direction. Notice that $L=L_1\uu L_2$,
where
\begin{align*}
L_1&=\set{(x_1, x_2) : x_1=t, \, x_2=0 \te{ for } -1\leq t\leq 1}, \te{ and }\\
L_2&=\set{(x_1, x_2) : x_1=\cos t, \, x_2=\sin t\te{ for } 0\leq t\leq \pi}.
\end{align*}
Let $P$ be the mixed uniform distribution defined on the boundary of the semicircular disc such that $P:=\frac 12 P_1+\frac 12 P_2$, where $P_1$ is the uniform distribution on the base $L_1$ of the semicircular disc, and $P_2$ is the uniform distribution on the semicircular arc $L_2$. Thus, if $f_1$ and $f_2$ are the probability density functions for $P_1$ and $P_2$, we have
\begin{align*}
 f_1(x_1, x_2)=\left\{\begin{array}{cc}
\frac 1 2 & \te{ if } (x_1, x_2) \in L_1,\\
0 & \te{ otherwise,}
\end{array}\right. \te{ and }  f_2(x_1, x_2)=\left\{\begin{array}{cc}
\frac  1 {2 \pi} & \te{ if }  (x_1, x_2) \in L_2,\\
0 & \te{ otherwise.}
\end{array}\right.
\end{align*}
Thus, if $f$ is the probability density function for the mixed distribution $P$, then we have $f=\frac 12 f_1+\frac 12 f_2$, i.e., $f$ is defined by
\begin{align*} \label{eq900}
 f(x_1, x_2)=\left\{\begin{array}{cc}
\frac 1 4 & \te{ if } (x_1, x_2) \in L_1,\\
\frac  1 {2 \pi} & \te{ if }  (x_1, x_2) \in L_2, \\
  0 & \te{ otherwise}.
\end{array}\right.
\end{align*}
On both $L_1$ and $L_2$, we have $ds=\sqrt{(\frac {dx_1}{dt})^2 +(\frac {dx_2}{dt})^2} \, dt=dt$ yielding $dP(s)=P(ds)=f(x_1, x_2)ds=f(x_1, x_2) dt$.

\begin{lemma}
Let $X$ be a continuous random variable with the mixed distribution $P$ taking values on $L$. Then,
\[E(X)=(0, \frac 1 {\pi}) \te{ and } V:=V(X)=\frac{2}{3}-\frac{1}{\pi ^2}.\]
\end{lemma}
\begin{proof} We have,
\begin{align*} \label{eq1}
&E(X)  =\int_L(x_1 i+x_2 j) dP=\frac 1 4\int_{L_1}  (t, 0) \,dt +\frac 1{2 \pi} \int_{L_2}(\cos t, \sin t) \,dt=(0,\frac{1}{\pi }).
\end{align*}  To calculate the variance, we proceed as follows:
\begin{align*} V(X)&=E\|X-E(X)\|^2=\int_L \Big((x_1-0)^2 +(x_2-\frac 1 {\pi})^2\Big)dP\\
&=\frac 1 4 \int_{-1}^1 \Big((t-0)^2 +(0-\frac 1 {\pi})^2\Big)\, dt+\frac 1 {2 \pi} \int_{0}^{\pi}\Big((\cos t-0)^2 +(\sin t-\frac 1 {\pi})^2\Big)dt\\
&=\frac{2}{3}-\frac{1}{\pi ^2}.
\end{align*}
Hence the lemma.
\end{proof}
\begin{remark} \label{remark2} Proceeding similarly as Remark~\ref{remark45}, we see that the optimal set of one-mean is the set $\set{(0, \frac 1 {\pi})}$, and the corresponding quantization error is the variance $V:=V(X)$ of the random variable $X$.
\end{remark}

In the following proposition we give the optimal sets of two-means.
\begin{prop}
Let $P$ be the mixed distribution on the boundary of the semicircle. Then, the set $\set{(-\frac{1}{4}-\frac{1}{\pi },\frac{1}{\pi }), (\frac{1}{4}+\frac{1}{\pi },\frac{1}{\pi })}$ forms the optimal set of two-means, and the quantization error for two-means is given by
\[V_2=2 \left(\frac{96-24 \pi +7 \pi ^2}{192 \pi ^2}+\frac{-96-8 \pi +17 \pi ^2}{64 \pi ^2}\right)=0.242369.\]
\end{prop}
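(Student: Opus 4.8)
The plan is to adapt the case analysis used in the proof of Proposition~\ref{prop01}. Let $P$ and $Q$ form an optimal set of two-means, and let $\ell$ be the common boundary of their Voronoi regions, i.e., the perpendicular bisector of the segment $PQ$. Since the semicircular disc is convex, $\ell$ meets the closed curve $L$ in at most two points unless $\ell$ contains the base $L_1$; the latter is impossible, for then $L_1$ would lie on the boundary of a Voronoi region while $P(L_1)=\frac{1}{2}>0$, contradicting Proposition~\ref{prop0}$(ii)$. By Proposition~\ref{prop0}$(i)$ each Voronoi region carries positive mass, so $\ell$ meets $L$ in exactly two points $D$ and $E$, and (folding the cases where $D$ or $E$ is an endpoint of the base into the adjacent configuration) only two configurations occur: $(a)$ one of $D,E$ lies on the base $L_1$ and the other on the semicircular arc $L_2$; or $(b)$ both $D$ and $E$ lie on $L_2$, so that the whole base $L_1$ is contained in one Voronoi region.

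In each configuration I would parametrize $D$ and $E$, express $\tl p=E(X:X\in\gw{DE})$ and $\tl q=E(X:X\in\gw{ED})$ as explicit functions of the two parameters by splitting the defining integrals over $L_1$ and $L_2$ according to $f=\frac{1}{4}$ on $L_1$ and $f=\frac{1}{2\pi}$ on $L_2$, and then impose the two canonical equations $\rho(\tl d,\tl p)-\rho(\tl d,\tl q)=0$ and $\rho(\tl e,\tl p)-\rho(\tl e,\tl q)=0$. In configuration $(b)$ I expect to show, exactly as with the impossible cases in Proposition~\ref{prop01}, that the canonical system has no admissible solution, or that any admissible solution yields a distortion strictly larger than the value below, so $(b)$ can be discarded. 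In configuration $(a)$, writing $\tl d=(\ga,0)$ with $-1<\ga<1$ and $\tl e=(\cos\gb,\sin\gb)$ with $0<\gb<\pi$, solving the canonical system should single out $\ga=0$ and $\gb=\frac{\pi}{2}$; by the reflection symmetry $x_1\mapsto-x_1$ of the mixed distribution this forces $\tl p=E(X:X\in L,\ x_1\ge0)$ and $\tl q=E(X:X\in L,\ x_1\le0)$, and a direct computation gives
\[
\tl p=2\Big(\tfrac{1}{4}(\tfrac{1}{2},0)+\tfrac{1}{2\pi}(1,1)\Big)=\Big(\tfrac{1}{4}+\tfrac{1}{\pi},\tfrac{1}{\pi}\Big),\qquad \tl q=\Big(-\tfrac{1}{4}-\tfrac{1}{\pi},\tfrac{1}{\pi}\Big).
\]

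Finally I would compute the distortion of this configuration. Since $\ell$ is the $x_2$-axis, the half $\{x_1\ge0\}\cap L$ is the Voronoi region of $\tl p$ up to a $P$-null set, so by symmetry
\[
V_2=2\Big(\tfrac{1}{4}\int_0^1\rho\big((t,0),(\tfrac{1}{4}+\tfrac{1}{\pi},\tfrac{1}{\pi})\big)\,dt+\tfrac{1}{2\pi}\int_0^{\pi/2}\rho\big((\cos t,\sin t),(\tfrac{1}{4}+\tfrac{1}{\pi},\tfrac{1}{\pi})\big)\,dt\Big),
\]
and evaluating these two elementary integrals gives $V_2=2\big(\frac{96-24\pi+7\pi^2}{192\pi^2}+\frac{-96-8\pi+17\pi^2}{64\pi^2}\big)=0.242369$; comparing with the distortions of the remaining critical points produced by the canonical system then confirms minimality. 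The main obstacle is the case analysis in the middle step: unlike the hexagonal setting, where every canonical equation is polynomial and one can solve by elimination, here the arc parametrization $(\cos\gb,\sin\gb)$ makes the canonical equations transcendental, so ruling out configuration $(b)$ and isolating the relevant root $(\ga,\gb)=(0,\frac{\pi}{2})$ in configuration $(a)$ (rather than a spurious one) will require a careful monotonicity or sign argument instead of a routine computation.
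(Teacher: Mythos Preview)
Your proposal is correct and follows essentially the same route as the paper: the same two-case split (bisector meeting $L_1$ and $L_2$, versus meeting $L_2$ twice), the same canonical equations, and the same identification of $(\ga,\gb)=(0,\tfrac{\pi}{2})$ in case~$(a)$ via symmetry. One point of clarification: in configuration~$(b)$ the canonical system \emph{does} have an admissible solution (the paper finds the horizontal bisector at height $\sin\ga$ with $\ga\approx 0.4366$), and the paper discards it not by a monotonicity argument but simply by computing its distortion ($\approx 0.4348$) and comparing with $V_2(\text{Case~1})\approx 0.2424$; so the ``careful sign argument'' you anticipate is replaced in the paper by a direct numerical comparison.
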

\begin{proof}
 Let the points $P$ and $Q$ form an optimal set of two-means. Let $\ell$ be the boundary of their Voronoi regions. The following two cases can arise:

Case~1.  $\ell$ intersects both $L_1$ and $L_2$.

Let $\ell$ intersect $L_1$ and $L_2$ at the points $D$ and $E$, respectively. Let the points $D$ and $E$ be given by the parameters  $t=\ga$ and $t=\gb$.
Let $P$ and $Q$ be the conditional expectations of the random variable $X$ given that $X$ takes values on the boundaries $DB\uu \arc{BE}$ and $\arc {EA}\uu AD$, respectively. Then, after some calculations, we have
\begin{align*}
\tilde p&=E(X : X\in DB\uu \arc{BE})=\frac{\frac{1}{4} \int_{\alpha }^1 (t, 0) \, dt+\frac 1{2 \pi}\int_0^{\beta } (\cos t, \sin t) \, dt}{\frac{1}{4} \int_{\alpha }^1 1 \, dt+\frac 1{2 \pi}\int_0^{\beta } 1 \, dt}\\
&=\Big(\frac{\frac{1}{4} (\frac{1}{2}-\frac{\alpha ^2}{2} )+\frac{\sin \gb}{2 \pi }}{\frac{1-\alpha }{4}+\frac{\beta }{2 \pi }},\frac{1-\cos \gb}{2 \pi  (\frac{1-\alpha }{4}+\frac{\beta }{2 \pi } )}\Big),
\end{align*}
and similarly \[\tilde q=E(X : X  \in \,\arc{EA}\uu AD)=\Big(\frac{\frac{1}{4}  (\frac{\alpha ^2}{2}-\frac{1}{2} )-\frac{\sin \gb}{2 \pi }}{\frac{\alpha +1}{4}+\frac{\pi -\beta }{2 \pi }},\frac{\cos \gb+1}{2 \pi  (\frac{\alpha +1}{4}+\frac{\pi -\beta }{2 \pi } )}\Big).\]
Since $P$ and $Q$ form an optimal set of two-means, and  $DE$ is the boundary of their corresponding Voronoi regions,  we have the canonical equations as
 $\rho(\tilde d, \tilde p)-\rho(\tilde d, \tilde q)=0$, and $\rho(\tilde e, \tilde p)-\rho(\tilde e, \tilde q)=0$.
Put the values of $\tilde p, \tilde q, \tilde d$ and $\tilde e$, and then solving the two equations in $\ga$ and $\gb$, we have $\ga=0$ and $\gb=\frac \pi 2$ implying
\[\tilde p=(\frac{1}{4}+\frac{1}{\pi },\frac{1}{\pi }), \te{ and } \tilde q=(-\frac{1}{4}-\frac{1}{\pi },\frac{1}{\pi }),\]
and the corresponding distortion $V_2(\te{Case~1})$ error, due to a symmetry, is given by
\begin{align*} V_2(\te{Case~1})&=2 \Big( \frac{1}{4} \int_{\alpha }^1 \rho((t, 0), \tilde p) \, dt+\frac 1{2 \pi}\int_{\beta }^1  \rho((\cos t, \sin t), \tilde p) \, dt\Big)\\
&=2 \Big(\frac{96-24 \pi +7 \pi ^2}{192 \pi ^2}+\frac{-96-8 \pi +17 \pi ^2}{64 \pi ^2}\Big)=0.242369.
\end{align*}

Case~2.  $\ell$ intersects $L_2$ at two points.

Let $\ell$ intersect $L_2$ at the points $D$ and $E$, respectively. As before, there exist parameters $t=\ga$ and $t=\gb$, for which we have
\[\tilde d=(\cos \ga, \sin \ga), \te{ and }\tilde e=(\cos \gb, \sin \gb).\]
Let $P$ and $Q$ be the conditional expectations of the random variable $X$ given that $X$ takes values on the boundary above and below the line $\ell$, respectively. Then,
\[\tilde p=E(X : X \in \arc{DE}), \te{ and } \tilde q=E(X : X\in \arc{EA} \uu AB\uu \arc {BD}).\]
Proceeding in the similar way as Case~1, we calculate $\tilde p$ and $\tilde q$, and obtain the canonical equations
$\rho(\tilde d, \tilde p)-\rho(\tilde d, \tilde q)=0$, and $\rho(\tilde e, \tilde p)-\rho(\tilde e, \tilde q)=0.$
Solving the above two equations in $\ga$ and $\gb$, we have $\ga=0.436587$, and $\gb=\pi-\ga$, i.e., the line $\ell$ is parallel to the base $AOB$ of the semicircle, yielding
\[\tilde p=\Big(0,\frac{2 \cos \ga }{\pi -2 \ga }\Big)=(0, 0.798971), \te{ and } \tilde q=\Big(0,\frac{1-\cos \ga}{\pi   (\frac{\ga }{\pi }+\frac{1}{2})}\Big)=(0, 0.0467274),\]
and the corresponding distortion $V_2(\te{Case~2})$ is given by
\begin{align*} V_2(\te{Case~2})=2\Big(\frac 1 {2\pi} \int_{\gb}^{\frac{\pi }{2}} \rho((\cos t,\sin t), \tilde p)\, dt+\frac{1}{4} \int_0^1 \rho((t,0), \tilde q)dt+\frac 1 {2\pi} \int_{0}^{\gb} \rho((\cos t,\sin t), \tilde q)\, dt\Big)
\end{align*}
implying $V_2(\te{Case~2})=0.434806$.

Since $V_2(\te{Case~2})>V_2(\te{Case~1})$, the points in Case~1 form the optimal set of two-means, and $V_2(\te{Case~1})$ is the quantization error for two-means (see Figure~\ref{Fig3}). Thus, the proof of the proposition is complete.
\end{proof}

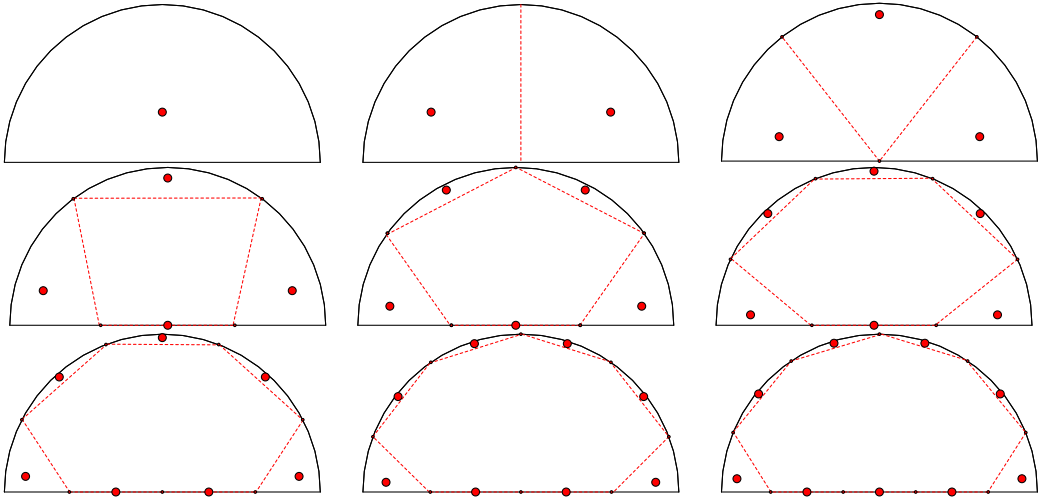
\begin{figure}
\begin{tikzpicture}[line cap=round,line join=round,>=triangle 45,x=0.7cm,y=0.7cm]
\draw [shift={(0.,0.)}] plot[domain=0.:3.141592653589793,variable=\t]({1.*3.*cos(\t r)+0.*3.*sin(\t r)},{0.*3.*cos(\t r)+1.*3.*sin(\t r)});
\draw [shift={(0.,0.)}] plot[domain=0.:3.141592653589793,variable=\t]({1.*3.*cos(\t r)+0.*3.*sin(\t r)},{0.*3.*cos(\t r)+1.*3.*sin(\t r)});
\draw (-3.,0.)-- (3.,0.);
\begin{scriptsize}
\draw [fill=ffqqqq] (0.,0.954929658551372) circle (1.5pt);
\end{scriptsize}
\end{tikzpicture} \quad
\begin{tikzpicture}[line cap=round,line join=round,>=triangle 45,x=0.7cm,y=0.7cm]
\draw [shift={(0.,0.)}] plot[domain=0.:3.141592653589793,variable=\t]({1.*3.*cos(\t r)+0.*3.*sin(\t r)},{0.*3.*cos(\t r)+1.*3.*sin(\t r)});
\draw [shift={(0.,0.)}] plot[domain=0.:3.141592653589793,variable=\t]({1.*3.*cos(\t r)+0.*3.*sin(\t r)},{0.*3.*cos(\t r)+1.*3.*sin(\t r)});
\draw (-3.,0.)-- (3.,0.);
\draw [dash pattern=on 1pt off 1pt,color=ffqqqq] (0.,3.)-- (0.,0.);
\begin{scriptsize}
\draw [fill=ffqqqq] (1.704929658551372,0.954929658551372) circle (1.5pt);
\draw [fill=ffqqqq] (-1.704929658551372,0.954929658551372) circle (1.5pt);
\end{scriptsize}
\end{tikzpicture} \quad
\begin{tikzpicture}[line cap=round,line join=round,>=triangle 45,x=0.7cm,y=0.7cm]
\draw [shift={(0.,0.)}] plot[domain=0.:3.141592653589793,variable=\t]({1.*3.*cos(\t r)+0.*3.*sin(\t r)},{0.*3.*cos(\t r)+1.*3.*sin(\t r)});
\draw [shift={(0.,0.)}] plot[domain=0.:3.141592653589793,variable=\t]({1.*3.*cos(\t r)+0.*3.*sin(\t r)},{0.*3.*cos(\t r)+1.*3.*sin(\t r)});
\draw (-3.,0.)-- (3.,0.);
\draw [dash pattern=on 1pt off 1pt,color=ffqqqq] (1.85038,2.36137)-- (0.,0.);
\draw [dash pattern=on 1pt off 1pt,color=ffqqqq] (0.,0.)-- (-1.85038,2.36137);
\begin{scriptsize}
\draw [fill=ffqqqq] (1.9046,0.46413) circle (1.5pt);
\draw [fill=ffqqqq] (0.,2.78394) circle (1.5pt);
\draw [fill=ffqqqq] (-1.9046,0.46413) circle (1.5pt);
\draw [fill=ffqqqq] (1.85038,2.36137) circle (0.5pt);
\draw [fill=ffqqqq] (0.,0.) circle (0.5pt);
\draw [fill=ffqqqq] (-1.85038,2.36137) circle (0.5pt);
\end{scriptsize}
\end{tikzpicture} \quad
\begin{tikzpicture}[line cap=round,line join=round,>=triangle 45,x=0.7cm,y=0.7cm]
\draw [shift={(0.,0.)}] plot[domain=0.:3.141592653589793,variable=\t]({1.*3.*cos(\t r)+0.*3.*sin(\t r)},{0.*3.*cos(\t r)+1.*3.*sin(\t r)});
\draw [shift={(0.,0.)}] plot[domain=0.:3.141592653589793,variable=\t]({1.*3.*cos(\t r)+0.*3.*sin(\t r)},{0.*3.*cos(\t r)+1.*3.*sin(\t r)});
\draw (-3.,0.)-- (3.,0.);
\draw [line width=0.4pt,dash pattern=on 1pt off 1pt,color=ffqqqq] (-1.7809468457106679,2.41417239085224)-- (1.7740909908825524,2.4192149875671163);
\draw [line width=0.4pt,dash pattern=on 1pt off 1pt,color=ffqqqq] (1.7740909908825524,2.4192149875671163)-- (1.24,0.);
\draw [line width=0.4pt,dash pattern=on 1pt off 1pt,color=ffqqqq] (1.24,0.)-- (-1.3,0.);
\draw [line width=0.4pt,dash pattern=on 1pt off 1pt,color=ffqqqq] (-1.3,0.)-- (-1.7809468457106679,2.41417239085224);
\begin{scriptsize}
\draw [fill=ffqqqq] (0.,0.) circle (1.5pt);
\draw [fill=ffqqqq] (2.3647,0.657512) circle (1.5pt);
\draw [fill=ffqqqq] (-2.3647,0.657512) circle (1.5pt);
\draw [fill=ffqqqq] (0.,2.79861) circle (1.5pt);
\draw [fill=ffqqqq] (1.27376,0.) circle (0.5pt);
\draw [fill=ffqqqq] (-1.27376,0.) circle (0.5pt);
\draw [fill=ffqqqq] (1.79449,2.40412) circle (0.5pt);
\draw [fill=ffqqqq] (-1.79449,2.40412) circle (0.5pt);
\end{scriptsize}
\end{tikzpicture}\quad
\begin{tikzpicture}[line cap=round,line join=round,>=triangle 45,x=0.7cm,y=0.7cm]
\draw [shift={(0.,0.)}] plot[domain=0.:3.141592653589793,variable=\t]({1.*3.*cos(\t r)+0.*3.*sin(\t r)},{0.*3.*cos(\t r)+1.*3.*sin(\t r)});
\draw [shift={(0.,0.)}] plot[domain=0.:3.141592653589793,variable=\t]({1.*3.*cos(\t r)+0.*3.*sin(\t r)},{0.*3.*cos(\t r)+1.*3.*sin(\t r)});
\draw (-3.,0.)-- (3.,0.);
\draw [line width=0.4pt,dash pattern=on 1pt off 1pt,color=ffqqqq] (0.008091676567611426,2.9999890874418735)-- (-2.457975330433301,1.719987579891583);
\draw [line width=0.4pt,dash pattern=on 1pt off 1pt,color=ffqqqq] (-2.457975330433301,1.719987579891583)-- (-1.246,0.);
\draw [line width=0.4pt,dash pattern=on 1pt off 1pt,color=ffqqqq] (-1.246,0.)-- (1.218,0.);
\draw [line width=0.4pt,dash pattern=on 1pt off 1pt,color=ffqqqq] (1.218,0.)-- (2.4413362836008554,1.7435243475139555);
\draw [line width=0.4pt,dash pattern=on 1pt off 1pt,color=ffqqqq] (2.4413362836008554,1.7435243475139555)-- (0.008091676567611426,2.9999890874418735);
\begin{scriptsize}
\draw [fill=ffqqqq] (0.,0.) circle (1.5pt);
\draw [fill=ffqqqq] (2.39157,0.362301) circle (1.5pt);
\draw [fill=ffqqqq] (1.31911,2.57007) circle (1.5pt);
\draw [fill=ffqqqq] (-2.39157,0.362301) circle (1.5pt);
\draw [fill=ffqqqq] (-1.31911,2.57007) circle (1.5pt);
\draw [fill=qqwuqq] (1.22323,0.) circle (0.5pt);
\draw [fill=qqwuqq] (2.43745,1.74895) circle (0.5pt);
\draw [fill=qqwuqq] (-1.22323,0.) circle (0.5pt);
\draw [fill=qqwuqq] (-2.43745,1.74895) circle (0.5pt);
\draw [fill=ffqqqq] (1.22323,0.) circle (0.5pt);
\draw [fill=ffqqqq] (2.43745,1.74895) circle (0.5pt);
\draw [fill=ffqqqq] (0.,3.) circle (0.5pt);
\draw [fill=ffqqqq] (-1.22323,0.) circle (0.5pt);
\draw [fill=ffqqqq] (-2.43745,1.74895) circle (0.5pt);
\end{scriptsize}
\end{tikzpicture} \quad
\begin{tikzpicture}[line cap=round,line join=round,>=triangle 45,x=0.7cm,y=0.7cm]
\draw [shift={(0.,0.)}] plot[domain=0.:3.141592653589793,variable=\t]({1.*3.*cos(\t r)+0.*3.*sin(\t r)},{0.*3.*cos(\t r)+1.*3.*sin(\t r)});
\draw [shift={(0.,0.)}] plot[domain=0.:3.141592653589793,variable=\t]({1.*3.*cos(\t r)+0.*3.*sin(\t r)},{0.*3.*cos(\t r)+1.*3.*sin(\t r)});
\draw (-3.,0.)-- (3.,0.);
\draw [line width=0.4pt,dash pattern=on 1pt off 1pt,color=ffqqqq] (-1.1312602091546702,2.7785338470465555)-- (1.0902078781120599,2.7948965602509155);
\draw [line width=0.4pt,dash pattern=on 1pt off 1pt,color=ffqqqq] (1.0902078781120599,2.7948965602509155)-- (2.7256224066693897,1.2533884059866534);
\draw [line width=0.4pt,dash pattern=on 1pt off 1pt,color=ffqqqq] (2.7256224066693897,1.2533884059866534)-- (1.16,0.);
\draw [line width=0.4pt,dash pattern=on 1pt off 1pt,color=ffqqqq] (1.16,0.)-- (-1.2,0.);
\draw [line width=0.4pt,dash pattern=on 1pt off 1pt,color=ffqqqq] (-1.2,0.)-- (-2.7180423487301457,1.2697424110856157);
\draw [line width=0.4pt,dash pattern=on 1pt off 1pt,color=ffqqqq] (-2.7180423487301457,1.2697424110856157)-- (-1.1312602091546702,2.7785338470465555);
\begin{scriptsize}
\draw [fill=ffqqqq] (0.,0.) circle (1.5pt);
\draw [fill=ffqqqq] (2.34518,0.198347) circle (1.5pt);
\draw [fill=ffqqqq] (-2.34518,0.198347) circle (1.5pt);
\draw [fill=ffqqqq] (2.01705,2.12291) circle (1.5pt);
\draw [fill=ffqqqq] (0.,2.92835) circle (1.5pt);
\draw [fill=ffqqqq] (-2.01705,2.12291) circle (1.5pt);
\draw [fill=ffqqqq] (-1.18098,0.) circle (0.5pt);
\draw [fill=ffqqqq] (1.18098,0.) circle (0.5pt);
\draw [fill=ffqqqq] (2.72559,1.25346) circle (0.5pt);
\draw [fill=ffqqqq] (1.11253,2.78609) circle (0.5pt);
\draw [fill=ffqqqq] (-1.11253,2.78609) circle (0.5pt);
\draw [fill=ffqqqq] (-2.72559,1.25346) circle (0.5pt);
\end{scriptsize}
\end{tikzpicture}
\quad
\begin{tikzpicture}[line cap=round,line join=round,>=triangle 45,x=0.7cm,y=0.7cm]
\draw [shift={(0.,0.)}] plot[domain=0.:3.141592653589793,variable=\t]({1.*3.*cos(\t r)+0.*3.*sin(\t r)},{0.*3.*cos(\t r)+1.*3.*sin(\t r)});
\draw [shift={(0.,0.)}] plot[domain=0.:3.141592653589793,variable=\t]({1.*3.*cos(\t r)+0.*3.*sin(\t r)},{0.*3.*cos(\t r)+1.*3.*sin(\t r)});
\draw (-3.,0.)-- (3.,0.);
\draw [line width=0.4pt,dash pattern=on 1pt off 1pt,color=ffqqqq] (-2.66296049062134,1.3815358936306195)-- (-1.051154708025566,2.809817392606802);
\draw [line width=0.4pt,dash pattern=on 1pt off 1pt,color=ffqqqq] (-1.051154708025566,2.809817392606802)-- (1.0863678797772924,2.796391394241548);
\draw [line width=0.4pt,dash pattern=on 1pt off 1pt,color=ffqqqq] (1.0863678797772924,2.796391394241548)-- (2.6711231603651164,1.3656870293596084);
\draw [line width=0.4pt,dash pattern=on 1pt off 1pt,color=ffqqqq] (2.6711231603651164,1.3656870293596084)-- (1.78,0.);
\draw [line width=0.4pt,dash pattern=on 1pt off 1pt,color=ffqqqq] (-2.66296049062134,1.3815358936306195)-- (-1.76,0.);
\draw [line width=0.4pt,dash pattern=on 1pt off 1pt,color=ffqqqq] (-1.76,0.)-- (-0.02,0.);
\draw [line width=0.4pt,dash pattern=on 1pt off 1pt,color=ffqqqq] (-0.02,0.)-- (1.78,0.);
\begin{scriptsize}
\draw [fill=ffqqqq] (-0.8828,0.) circle (1.5pt);
\draw [fill=ffqqqq] (0.8828,0.) circle (1.5pt);
\draw [fill=ffqqqq] (2.59703,0.296741) circle (1.5pt);
\draw [fill=ffqqqq] (1.95678,2.18591) circle (1.5pt);
\draw [fill=ffqqqq] (0.,2.9338) circle (1.5pt);
\draw [fill=ffqqqq] (-1.95678,2.18591) circle (1.5pt);
\draw [fill=ffqqqq] (-2.59703,0.296741) circle (1.5pt);
\draw [fill=ffqqqq] (-1.7656,0.) circle (0.5pt);
\draw [fill=ffqqqq] (0.,0.) circle (0.5pt);
\draw [fill=ffqqqq] (1.7656,0.) circle (0.5pt);
\draw [fill=ffqqqq] (2.66708,1.37356) circle (0.5pt);
\draw [fill=ffqqqq] (1.07105,2.80229) circle (0.5pt);
\draw [fill=ffqqqq] (-1.07105,2.80229) circle (0.5pt);
\draw [fill=ffqqqq] (-2.66708,1.37356) circle (0.5pt);
\end{scriptsize}
\end{tikzpicture}
\quad
\begin{tikzpicture}[line cap=round,line join=round,>=triangle 45,x=0.7 cm,y=0.7 cm]
\draw [shift={(0.,0.)}] plot[domain=0.:3.141592653589793,variable=\t]({1.*3.*cos(\t r)+0.*3.*sin(\t r)},{0.*3.*cos(\t r)+1.*3.*sin(\t r)});
\draw [shift={(0.,0.)}] plot[domain=0.:3.141592653589793,variable=\t]({1.*3.*cos(\t r)+0.*3.*sin(\t r)},{0.*3.*cos(\t r)+1.*3.*sin(\t r)});
\draw (-3.,0.)-- (3.,0.);
\draw [line width=0.4pt,dash pattern=on 1pt off 1pt,color=ffqqqq] (-1.7284710608681093,2.4520170863477833)-- (0.02013377475005581,2.999932437758276);
\draw [line width=0.4pt,dash pattern=on 1pt off 1pt,color=ffqqqq] (0.02013377475005581,2.999932437758276)-- (1.6918849384725978,2.4774029456205895);
\draw [line width=0.4pt,dash pattern=on 1pt off 1pt,color=ffqqqq] (1.6918849384725978,2.4774029456205895)-- (2.809817392606802,1.051154708025566);
\draw [line width=0.4pt,dash pattern=on 1pt off 1pt,color=ffqqqq] (2.809817392606802,1.051154708025566)-- (1.76,0.);
\draw [line width=0.4pt,dash pattern=on 1pt off 1pt,color=ffqqqq] (1.76,0.)-- (0.04,0.);
\draw [line width=0.4pt,dash pattern=on 1pt off 1pt,color=ffqqqq] (0.04,0.)-- (-1.74,0.);
\draw [line width=0.4pt,dash pattern=on 1pt off 1pt,color=ffqqqq] (-1.74,0.)-- (-2.809817392606802,1.051154708025566);
\draw [line width=0.4pt,dash pattern=on 1pt off 1pt,color=ffqqqq] (-2.809817392606802,1.051154708025566)-- (-1.7284710608681093,2.4520170863477833);
\begin{scriptsize}
\draw [fill=ffqqqq] (-0.860297,0.) circle (1.5pt);
\draw [fill=ffqqqq] (0.860297,0.) circle (1.5pt);
\draw [fill=ffqqqq] (2.56083,0.184716) circle (1.5pt);
\draw [fill=ffqqqq] (2.33216,1.81341) circle (1.5pt);
\draw [fill=ffqqqq] (0.882326,2.81938) circle (1.5pt);
\draw [fill=ffqqqq] (-0.882326,2.81938) circle (1.5pt);
\draw [fill=ffqqqq] (-2.33216,1.81341) circle (1.5pt);
\draw [fill=ffqqqq] (-2.56083,0.184716) circle (1.5pt);
\draw [fill=ffqqqq] (-1.72059,0.) circle (0.5pt);
\draw [fill=ffqqqq] (0.,0.) circle (0.5pt);
\draw [fill=ffqqqq] (1.72059,0.) circle (0.5pt);
\draw [fill=ffqqqq] (2.8102,1.05013) circle (0.5pt);
\draw [fill=ffqqqq] (1.71021,2.46479) circle (0.5pt);
\draw [fill=ffqqqq] (0.,3.) circle (0.5pt);
\draw [fill=ffqqqq] (-1.71021,2.46479) circle (0.5pt);
\draw [fill=ffqqqq] (-2.8102,1.05013) circle (0.5pt);
\end{scriptsize}
\end{tikzpicture}
 \quad
\begin{tikzpicture}[line cap=round,line join=round,>=triangle 45,x=0.7 cm,y=0.7 cm]
\draw [shift={(0.,0.)}] plot[domain=0.:3.141592653589793,variable=\t]({1.*3.*cos(\t r)+0.*3.*sin(\t r)},{0.*3.*cos(\t r)+1.*3.*sin(\t r)});
\draw [shift={(0.,0.)}] plot[domain=0.:3.141592653589793,variable=\t]({1.*3.*cos(\t r)+0.*3.*sin(\t r)},{0.*3.*cos(\t r)+1.*3.*sin(\t r)});
\draw (-3.,0.)-- (3.,0.);
\draw [line width=0.4pt,dash pattern=on 1pt off 1pt,color=ffqqqq] (-1.6732831645161137,2.4900047091013584)-- (0.04053683939107896,2.9997261149398593);
\draw [line width=0.4pt,dash pattern=on 1pt off 1pt,color=ffqqqq] (0.04053683939107896,2.9997261149398593)-- (1.645530965928074,2.508431350500113);
\draw [line width=0.4pt,dash pattern=on 1pt off 1pt,color=ffqqqq] (1.645530965928074,2.508431350500113)-- (2.7798385302537154,1.1280504180739719);
\draw [line width=0.4pt,dash pattern=on 1pt off 1pt,color=ffqqqq] (2.7798385302537154,1.1280504180739719)-- (2.06,0.);
\draw [line width=0.4pt,dash pattern=on 1pt off 1pt,color=ffqqqq] (2.06,0.)-- (0.68,0.);
\draw [line width=0.4pt,dash pattern=on 1pt off 1pt,color=ffqqqq] (0.68,0.)-- (-0.66,0.);
\draw [line width=0.4pt,dash pattern=on 1pt off 1pt,color=ffqqqq] (-0.66,0.)-- (-2.06,0.);
\draw [line width=0.4pt,dash pattern=on 1pt off 1pt,color=ffqqqq] (-2.06,0.)-- (-2.7756856198367217,1.1382307937459935);
\draw [line width=0.4pt,dash pattern=on 1pt off 1pt,color=ffqqqq] (-2.7756856198367217,1.1382307937459935)-- (-1.6732831645161137,2.4900047091013584);
\begin{scriptsize}
\draw [fill=ffqqqq] (-1.37698,0.) circle (1.5pt);
\draw [fill=ffqqqq] (0.,0.) circle (1.5pt);
\draw [fill=ffqqqq] (1.37698,0.) circle (1.5pt);
\draw [fill=ffqqqq] (2.70617,0.252026) circle (1.5pt);
\draw [fill=ffqqqq] (2.29486,1.86371) circle (1.5pt);
\draw [fill=ffqqqq] (0.863014,2.82754) circle (1.5pt);
\draw [fill=ffqqqq] (-0.863014,2.82754) circle (1.5pt);
\draw [fill=ffqqqq] (-2.29486,1.86371) circle (1.5pt);
\draw [fill=ffqqqq] (-2.70617,0.252026) circle (1.5pt);
\draw [fill=ffqqqq] (-2.06547,0.) circle (0.5pt);
\draw [fill=ffqqqq] (-0.688489,0.) circle (0.5pt);
\draw [fill=ffqqqq] (0.688489,0.) circle (0.5pt);
\draw [fill=ffqqqq] (2.06547,0.) circle (0.5pt);
\draw [fill=ffqqqq] (2.77943,1.12905) circle (0.5pt);
\draw [fill=ffqqqq] (1.67524,2.48869) circle (0.5pt);
\draw [fill=ffqqqq] (0.,3.) circle (0.5pt);
\draw [fill=ffqqqq] (-1.67524,2.48869) circle (0.5pt);
\draw [fill=ffqqqq] (-2.77943,1.12905) circle (0.5pt);
\end{scriptsize}
\end{tikzpicture}
\caption{Points in an optimal set of $n$-means for $1\leq n\leq 9$.} \label{Fig3}
\end{figure}

\begin{prop} The set $\set{(-0.634868, 0.15471), (0, 0.92798), (0.634868, 0.15471)}$ forms an optimal set of three-means, and the quantization error for three-means is given by
$V_3=0.147821.$
\end{prop}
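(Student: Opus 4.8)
The plan is to follow the case-analysis template of the two-means proposition, exploiting the reflective symmetry of $P$. The mixed distribution $P$ is invariant under the reflection $(x_1,x_2)\mapsto(-x_1,x_2)$ about the $x_2$-axis, so I expect an optimal set of three-means to be symmetric under it: one quantizer on the axis and a reflected pair. As in the two-means argument, I would run through the possible Voronoi configurations and discard the asymmetric and degenerate ones, concluding that an optimal triple has the form $\set{(-u,v),(0,w),(u,v)}$ with $v,w>0$. For such a triple the $x_2$-axis is automatically the common Voronoi boundary of $(-u,v)$ and $(u,v)$, meeting $L$ at $O$ and at $N:=(0,1)$, so the only ``free'' Voronoi boundary is the perpendicular bisector $\ell$ of $(u,v)$ and $(0,w)$, together with its mirror image.

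For the main case I would take the central quantizer $Q=(0,w)$ near the top, so that $\ell$ meets $L$ in a single point $D=(\cos\gb,\sin\gb)$ on the arc $L_2$, with $0<\gb<\frac\pi2$; then the Voronoi region of $Q$ meets $L$ in $\set{(\cos t,\sin t):\gb\le t\le\pi-\gb}$, and the region of the right quantizer meets $L$ in $\set{(t,0):0\le t\le1}\uu\set{(\cos t,\sin t):0\le t\le\gb}$. Writing
\[
\tilde q=\frac{\frac1{2\pi}\int_\gb^{\pi-\gb}(\cos t,\sin t)\,dt}{\frac1{2\pi}\int_\gb^{\pi-\gb}dt}=\Big(0,\frac{2\cos\gb}{\pi-2\gb}\Big),\qquad
\tilde p=\frac{\frac14\int_0^1(t,0)\,dt+\frac1{2\pi}\int_0^\gb(\cos t,\sin t)\,dt}{\frac14\int_0^1 dt+\frac1{2\pi}\int_0^\gb dt}
\]
as explicit functions of $\gb$, I would impose the single canonical equation $\rho(\tilde d,\tilde p)-\rho(\tilde d,\tilde q)=0$ with $\tilde d=(\cos\gb,\sin\gb)$, solve it numerically for $\gb$, and read off $\tilde p=(0.634868,0.15471)$, $\tilde q=(0,0.92798)$, and $\tilde r=(-0.634868,0.15471)$. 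I would then check that the remaining assertions of Proposition~\ref{prop0} become inequalities that hold for these values (in particular that $O$ and $N$ fall in the correct Voronoi regions), and compute
\[
V_3=2\Big(\tfrac14\int_0^1\rho((t,0),\tilde p)\,dt+\tfrac1{2\pi}\int_0^\gb\rho((\cos t,\sin t),\tilde p)\,dt\Big)+\tfrac1{2\pi}\int_\gb^{\pi-\gb}\rho((\cos t,\sin t),\tilde q)\,dt=0.147821,
\]
the first summand doubled by the reflection symmetry.

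It then remains to eliminate the other configurations, and this is where I expect the real work. I would treat: (a) the case in which $Q$'s region reaches the base $L_1$, so $\ell$ meets both $L_1$ and $L_2$ — here the canonical system acquires an extra crossing parameter, and I would show it has no admissible solution, or yields a strictly larger distortion; (b) the ``reflected'' symmetric configuration in which the lone quantizer sits on (or near) the base around $O$ and the reflected pair lies on the arc — solve the analogous canonical equation and verify its distortion exceeds $0.147821$; (c) configurations in which a Voronoi boundary meets $L_2$ twice off the axis, and genuinely asymmetric triples, eliminated by the symmetry argument together with a distortion comparison as in Case~2 of the two-means proof; and (d) the degenerate limits $\gb\to0$ and $\gb\to\frac\pi2$. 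The main obstacle is that the canonical equation is transcendental — the base contributes polynomial terms and the arc contributes trigonometric terms to both the numerators and the denominators of $\tilde p$ and $\tilde q$ — so it has no closed-form root, and, exactly as in the two-means proposition, the delicate point is the exhaustive comparison of cases that guarantees the root found in the main case realizes the global minimum.
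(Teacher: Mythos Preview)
Your proposal is correct and follows essentially the same approach as the paper. The paper organizes the argument into exactly the two symmetric configurations you identify: your ``main case'' (central quantizer near the top, solved via the single canonical equation at the arc crossing, yielding $b\approx0.906$ and $V_3=0.147821$) is the paper's Case~2, and your case~(b) (lone quantizer at the origin on the base, yielding distortion $\approx0.190082$) is the paper's Case~1; the paper then concludes by comparing the two distortions, without explicitly treating your side cases (a), (c), (d).
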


\begin{proof} Let the set $\ga:=\set{p, q, r}$ be an optimal set of three-means. Two cases can arise:

Case~1. $\ga$ contains a point from the base $AOB$.

Recall that $P$ is defined by $P=\frac 12 P_1+\frac 12 P_2$, i.e., the probability distribution $P$ is the uniform mixture of two uniform distributions, and the semicircle is symmetric about its vertical axis. Thus, if $\ga$ contains a point from the base, we can assume that $O (0, 0)\in\ga$, and the boundaries of the other two points cut the boundary of the semicircle at the points $F(-a, 0)$, $G(a, 0)$, and $H(0, 1)$, where $0<a<1$. Thus, we can assume that $p=E(X : X\in FG)=(0, 0)$, and $q=E(X : X \in GB\uu\arc {BH})$, and $r=E(X : X \in \arc {HA}\uu AF).$
Solving the canonical equation $\rho(\tl g, p)-\rho(\tl g, q)=0$, we obtain $a=0.462946$ yielding $q=(0.669762, 0.414182)$, and $r=(-0.669762, 0.414182)$. If $V_3(\te{Case~1})$ is the corresponding distortion error, we have
\begin{align*}
V_3(\te{Case~1})=2 \Big(\frac{1}{4} \int_0^{a} \rho((t,0), p) \, dt+\frac{1}{4} \int_{a}^1 \rho((t,0), q) \, dt+\frac{1}{2 \pi } \int_0^{\frac{\pi }{2}} \rho((\cos t, \sin t), q)\, dt\Big)
\end{align*}
yielding $V_3(\te{Case~1})=0.190082$.

Case~2. $\ga$ does not contain any point from the base $AOB$.

Due to symmetry and the uniform mixture of two uniform distributions, in this case we can assume that the points in $\ga$ cut the boundary of the semicircle at the points $O(0, 0)$, $G(\cos b, \sin b)$, and $H(-\cos b, \sin b)$, where $0<b<\frac {\pi} 2$. Thus, we can assume that $p=E(X : X\in OB\uu \arc{BG})$, and $q=E(X : X \in \arc{GH})$, and $r=E(X : X \in \arc {HA}\uu AO).$
Solving the canonical equation $\rho(\tl g, p)-\rho(\tl g, q)=0$, we obtain $b=0.906133$ yielding $p=(0.634868, 0.15471)$, $q=(0, 0.92798)$, and $r=(-0.634868, 0.15471)$. If $V_3(\te{Case~2})$ is the corresponding distortion error, we have
\begin{align*}
V_3(\te{Case~2})=2 \Big(\frac{1}{4} \int_0^{1} \rho((t,0), p) \, dt+\frac{1}{2\pi} \int_{0}^b \rho((\cos t, \sin t), p) \, dt\Big)+\frac{1}{2 \pi } \int_b^{\pi-b} \rho((\cos t, \sin t), q)\, dt
\end{align*}
yielding $V_3(\te{Case~2})=0.147821$.

Since $V_3(\te{Case~1})>V_3(\te{Case~2})$, the points in Case~2 form the optimal set of three-means, and $V_3(\te{Case~2})$ is the quantization error for three-means (see Figure~\ref{Fig3}). Thus, the proof of the proposition is complete.
\end{proof}

Let us now state the following proposition which gives the optimal sets of $n$-means for $4\leq n\leq 9$ for the mixed distribution on the boundary of the semicircle. The proof follows in the similar way as the previous lemma by considering the different cases.

\begin{prop} \label{prop41} Let $P$ be the mixed distribution on the boundary of the semicircle. Then:

$(i)$ the set $\set{(0, 0), (0.788235, 0.219171), (0, 0.932871), (-0.788235, 0.219171)}$ forms an optimal set of four-means with quantization error
$V_4=0.098412$;

$(ii)$ the set $\set{(0, 0), (0.79719, 0.120767), (0.439705, 0.856689), (-0.439705, 0.856689), \\(-0.79719, 0.120767)}$ forms an optimal set of five-means with quantization error $V_5=0.0654358$;

$(iii)$ the set $\set{(0, 0), (0.781728, 0.0661158), (0.672351, 0.707636), (0, 0.976117), \\ (-0.672351, 0.707636), (-0.781728, 0.0661158)}$ forms an optimal set of six-means with quantization error $V_6=0.0499565$;

$(iv)$ the set $\set{(-0.294267, 0), (0.294267, 0), (0.865678, 0.0989137), (0.65226, 0.728637),\\ (0, 0.977935), (-0.65226, 0.728637), (-0.865678, 0.0989137)}$ forms an optimal set of seven-means with quantization error $V_7=0.0366668$;

$(v)$ the set $\set{(-0.286766, 0), (0.286766, 0), (0.853609, 0.0615721), (0.777386,
0.604469), \\ (0.294109, 0.939793), (-0.294109, 0.939793), (-0.777386,
0.604469), \\(-0.853609, 0.0615721)}$ forms an optimal set of eight-means with quantization error $V_8=0.0290573$;

$(vi)$ the set $\set{(-0.458992, 0), (0, 0), (0.458992, 0), (0.902056, 0.0840085),
(0.764954, 0.621235),\\ (0.287671, 0.942514), (-0.287671, 0.942514),
(-0.764954, 0.621235),  (-0.902056, 0.0840085)}$ \\forms an optimal set of nine-means with quantization error $V_9=0.0233983$ (see Figure~\ref{Fig3}).
\end{prop}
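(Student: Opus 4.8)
The plan is to establish each of the six statements $(i)$--$(vi)$ by the same case-analysis that was carried out for $n=2$ and $n=3$, using throughout the reflective symmetry of $P$ about the $x_2$-axis together with the necessary conditions of Proposition~\ref{prop0}.

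First I would reduce the problem by symmetry. Because $P=\frac12P_1+\frac12P_2$ is invariant under $(x_1,x_2)\mapsto(-x_1,x_2)$, one may assume an optimal set $\ga_n$ is symmetric about the $x_2$-axis; if it were not, its mirror image would be a second optimal set, and symmetrizing (via the standard Voronoi/centroid argument) yields a symmetric one with the same quantization error. Hence for even $n$ the points of $\ga_n$ fall into $n/2$ mirror pairs, while for odd $n$ exactly one point lies on the axis and the other $n-1$ form $(n-1)/2$ pairs; this halves the number of free parameters in every subsequent computation.

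Next, by Proposition~\ref{prop0} the Voronoi regions partition $L$ into $n$ connected sub-arcs, and each quantizer is the conditional expectation of $X$ over its region. I would classify each quantizer by whether its region lies entirely in the base $L_1$, entirely in the semicircular arc $L_2$, or straddles one of the corners $A(-1,0)$, $B(1,0)$; since $P$ puts more mass per unit length on $L_1$ (density $\tfrac14$) than on $L_2$ (density $\tfrac1{2\pi}$), only a short list of such ``types'' is feasible for each $n$. For each admissible type one parametrizes the dividing points by their arc-length parameters, writes the quantizers as conditional expectations using the elementary integrals $\int(t,0)\,dt$ on $L_1$ and $\int(\cos t,\sin t)\,dt$ on $L_2$ exactly as in the $n=2,3$ arguments, and imposes the canonical equations $\rho(\tilde d,\tilde p)-\rho(\tilde d,\tilde q)=0$ at each dividing point. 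For any stretch of several consecutive regions lying wholly inside $L_2$, I would instead quote Proposition~\ref{prop32} for the relevant sub-arc, which writes those quantizers in closed form and shortens the system. Solving the resulting small systems (numerically where the roots are transcendental, as already happened for $n=3$) produces the candidate configurations; evaluating the distortion $\int_L\min_{a\in\ga}\|x-a\|^2\,dP$ for each candidate by summing the elementary sub-arc integrals, and also checking the degenerate limits in which a dividing parameter collapses onto a corner (which fall back to configurations with one fewer straddling region), one selects the least-distortion configuration and reads off the stated optimal set and the value of $V_n$.

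The main obstacle is the bookkeeping in the case analysis: one must argue that the enumerated list of combinatorial types is exhaustive and that no asymmetric or ``wrong-type'' configuration beats the symmetric candidate, given that the distortion is not a convex function of the parameters and several spurious critical points appear (as already seen for $n=2,3$). The individual integrals and equations are routine, but there are many of them, and most of the critical parameters --- hence the reported coordinates and errors --- are only available numerically.
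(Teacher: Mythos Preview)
Your overall plan matches the paper's: the paper gives no proof of this proposition and simply remarks that ``the proof follows in the similar way as the previous lemma by considering the different cases.'' So your strategy of exploiting the axial symmetry, enumerating combinatorial types for the Voronoi partition of $L$, writing the centroids via the same elementary integrals used for $n=2,3$, imposing the canonical equations at the break-points, invoking Proposition~\ref{prop32} on sub-arcs of $L_2$, and then comparing the resulting distortions numerically, is exactly what is intended.

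There is, however, a concrete error in your symmetry reduction that would derail the case analysis. From the reflection invariance you conclude that ``for even $n$ the points of $\ga_n$ fall into $n/2$ mirror pairs, while for odd $n$ exactly one point lies on the axis.'' Symmetry only forces the number of on-axis points to have the same parity as $n$; it does \emph{not} force that number to be minimal. In fact, for $n=4,5,6$ the optimal set contains the origin $(0,0)$, and for $n=4$ and $n=6$ it also contains a second on-axis point on the arc (namely $(0,0.932871)$ and $(0,0.976117)$). Under your stated parity rule these configurations would never appear among your candidates for even $n$, so you would compute only strictly larger distortions and report the wrong optimal set. The fix is easy: in the enumeration of types you must allow, for each $n$, any number $k\equiv n\pmod 2$ of on-axis quantizers (in practice $k\in\{0,2\}$ for even $n$ and $k\in\{1,3\}$ for odd $n$ here), with the corresponding regions symmetric about the $x_2$-axis; once you include those cases, your program goes through and reproduces the stated sets and errors.
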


\begin{remark} \label{remark11}
Let $\ga_n$ be an optimal set of $n$-means for $P$ for $n\geq 4$. From the above proposition we see that for $4\leq n\leq 9$ there exists a positive integers $k$, and two positive numbers $a$ and $b$ depending on $k$, such that if $m=n-k-2$, then $\ga_n$ contains $k$ quantizers which occur due to the uniform distribution on the closed interval $[-a, a]$, and $m$ quantizers  which occur due to the uniform distribution on the semicircular arc $\set{(\cos \gq, \sin \gq) : b\leq \gq\leq \pi-b}$. It can be proved that this fact is also true for any positive integer $n\geq 10$. In addition, $\ga_n$ contains two quantizers which are in the interior of the angles formed by the base $AOB$ and the semicircular arc $\arc{BA}$, the Voronoi regions of these two points contain elements from both the base and the semicircular arc. Due to too much technicality, we skip the proof of it in the paper. The two real numbers $a$ and $b$ are obtained by solving the two canonical equations
\begin{equation} \label{eq000} \rho(\tl d, \tl p)-\rho(\tl d, \tl q)=0, \te{ and } \rho(\tl e, \tl q)-\rho(\tl e, \tl r)=0,
\end{equation} where
\begin{equation*}
\left\{\begin{array}{ll}
&\tl d=(a, 0), \, e  = (\cos b, \sin b), \, \tl p=(a-\frac{a}{k}, 0), \\
& \tl q=\frac {\frac{1}{4} \int_a^1(t, 0) \, dt+\frac 1{2 \pi} \int_0^{b} (\cos t,\sin t) \, dt}{\frac{1}{4} \int_a^11 \, dt+\frac 1{2 \pi} \int_0^{b} 1 \, dt}=\Big(\frac{-\pi  a^2+4 \sin b+\pi }{8 \pi  \left(\frac{1-a}{4}+\frac{b}{2 \pi }\right)},\frac{\sin ^2\left(\frac{b}{2}\right)}{\pi  \left(\frac{1-a}{4}+\frac{b}{2 \pi }\right)}\Big),  \te{ and }\\
 & \tl r=\frac{\int_b^{\frac{b (m-2)+\pi }{m}} (\cos t,\sin t) \, dt}{\int_b^{\frac{b (m-2)+\pi }{m}} 1 \, dt}=\Big(\frac{m  (\sin \left(\frac{b (m-2)+\pi }{m}\right)-\sin b)}{\pi -2 b},\frac{m(\cos b-\cos (\frac{b (m-2)+\pi }{m}))}{\pi -2 b}\Big).
\end{array}
\right.
\end{equation*}
\end{remark}

Let us now give the following theorem.

\begin{theorem} \label{Th61}
Let $\ga_n$ be an optimal set of $n$-means for $n\geq 4$ such that $\ga_n$ contains $k:=k(n)$ elements from the base of the semicircular disc. Then,
\begin{align*}
\ga_n:&=\set{(-a+\frac{2j-1}{k} a, 0) : 1\leq j\leq k}\\
&\uu \Big\{ \frac {2m}{\pi-2b} \sin(\frac{\pi-2b}{2m}) \Big(\cos (b+(2j-1)\frac{\pi-2b}{2m}), \   \sin (b+(2j-1)\frac{\pi-2b}{2m})\Big) : 1\leq j\leq m\Big\}\\
&\uu \set{(r, s), (-r, s)},
\end{align*}
where $m=n-k-2$,  $r=\frac{-\pi  a^2+4 \sin b+\pi }{8 \pi  (\frac{1-a}{4}+\frac{b}{2 \pi })}$, $s=\frac{\sin ^2 (\frac{b}{2})}{\pi  (\frac{1-a}{4}+\frac{b}{2 \pi })}$, and the two positive real numbers $a$ and $b$ are determined by the equations in \eqref{eq000},
and the quantization error for $n$-means is given by
\begin{align*}\label{eq34}
 V_n&= \frac{1}{24}\Big(\frac{4 a^3}{k^2}+\frac{12  (2 m^2 \cos (\frac{\pi -2 b}{m})+(\pi -2 b)^2-2 m^2)}{\pi  (\pi -2 b)^2} \notag \\
&-\frac 1 {\pi  (\pi  (a-1)-2 b)}\Big(\pi ^2 a^4-8 \pi  a^3 b-4 \pi ^2 a^3+24 \pi  (a^2-1) \sin b+6 \pi ^2 a^2-24 \pi  a b \\
&-4 \pi ^2 a+48 b^2+32 \pi  b+96 \cos b+\pi ^2-96\Big)\Big). \notag
\end{align*}
\end{theorem}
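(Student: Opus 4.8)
The plan is to derive the explicit description of $\ga_n$ from the structural information collected in Remark~\ref{remark11}: granted that $\ga_n$ is optimal and meets the base in exactly $k=k(n)$ points, its quantizers split into three groups, namely $k$ points whose Voronoi regions partition the base segment $[-a,a]\ti\set{0}$, two ``transition'' points $(\pm r,s)$ whose Voronoi regions each absorb a leftover base piece together with a piece of the arc of angular width $b$, and $m:=n-k-2$ points whose Voronoi regions partition the sub-arc $\set{(\cos\gq,\sin\gq):b\leq\gq\leq\gp-b}$. Symmetry of $P$ about the $x_2$-axis forces the configuration to be mirror-symmetric, so the two transition points are reflections of each other and the three pieces split symmetrically about the axis. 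Once this shape is granted, the remaining work is to (i) identify each group of quantizers, (ii) pin down $a$ and $b$, and (iii) compute the distortion.

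First I would treat the three groups separately, each time using Proposition~\ref{prop0}$(iii)$ that a quantizer equals the conditional expectation of $X$ over its Voronoi region. For the $k$ base quantizers: since they all lie on the $x_1$-axis, the perpendicular bisectors separating consecutive ones are vertical, so the sub-problem is precisely optimal $k$-means for the uniform distribution on $[-a,a]$; optimality forces the $k$ subintervals to have equal length $\frac{2a}{k}$ and each quantizer to sit at its subinterval's midpoint, giving the listed points $(-a+\frac{2j-1}{k}a,0)$. For the $m$ arc quantizers: $P$ restricted and renormalized on the sub-arc is the uniform distribution on a circular arc, so Proposition~\ref{prop32} (applied with $[\ga,\gb]$ replaced by $[b,\gp-b]$ and $n$ by $m$) produces exactly the listed points together with the value of the corresponding restricted quantization error. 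For the two transition quantizers: $(r,s)$ is the conditional expectation of $X$ given $X\in([a,1]\ti\set{0})\uu\set{(\cos t,\sin t):0\leq t\leq b}$, and carrying out this integral with the weights $\frac14$ on the base and $\frac1{2\gp}$ on the arc reproduces $r=\frac{-\gp a^2+4\sin b+\gp}{8\gp(\frac{1-a}{4}+\frac b{2\gp})}$ and $s=\frac{\sin^2(b/2)}{\gp(\frac{1-a}{4}+\frac b{2\gp})}$.

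Next I would impose that $(a,0)$ lies on the Voronoi boundary between the last base quantizer $\tl p=(a-\frac ak,0)$ and the transition quantizer $\tl q=(r,s)$, and that $(\cos b,\sin b)$ lies on the Voronoi boundary between $\tl q$ and the first arc quantizer $\tl r$. Since a Voronoi boundary is the perpendicular bisector of the segment joining its two sites, these two conditions are exactly the canonical equations $\rho(\tl d,\tl p)-\rho(\tl d,\tl q)=0$ and $\rho(\tl e,\tl q)-\rho(\tl e,\tl r)=0$ of \eqref{eq000}, which determine the two unknowns $a$ and $b$. Finally, I would compute $V_n$ as the sum of three distortion contributions: the $k$ base regions contribute $\frac14\cdot k\cdot\frac1{12}\big(\frac{2a}{k}\big)^3=\frac{a^3}{6k^2}$; the $m$ arc regions contribute the restricted quantization error from Proposition~\ref{prop32} rescaled by the $P$-mass $\frac{\gp-2b}{2\gp}$ of the sub-arc; and the two transition regions contribute $2$ times $\big(\frac14\int_a^1\rho((t,0),(r,s))\,dt+\frac1{2\gp}\int_0^b\rho((\cos t,\sin t),(r,s))\,dt\big)$. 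Substituting the closed forms for $r$ and $s$, expanding, and collecting everything over the common denominator $\gp(\gp(a-1)-2b)$ yields the displayed formula for $V_n$.

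The main obstacle is not any single step above but the justification of the Voronoi shape itself: excluding all competing partitions (a base quantizer whose cell reaches onto the arc, an arc quantizer whose cell reaches the base, more or fewer than two transition points, or a non-symmetric split) requires a careful case analysis built on Proposition~\ref{prop0}, and it is precisely this part that Remark~\ref{remark11} and the hypothesis of the theorem leave implicit; making it fully rigorous is the delicate part. The only other real labor is the algebraic one — the transition-region integral and its combination with the base and arc terms — which is best handled by keeping $a$ and $b$ symbolic until the final collection of terms.
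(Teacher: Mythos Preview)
Your proposal is correct and follows essentially the same route as the paper: both split $\ga_n$ into the three groups supplied by Remark~\ref{remark11}, invoke the one-dimensional result for the $k$ base points, Proposition~\ref{prop32} for the $m$ arc points, and a direct conditional-expectation computation for the two corner points, then sum the three distortion contributions. Your write-up is in fact a bit more explicit than the paper's about why the canonical equations \eqref{eq000} arise and about the logical status of the structural hypothesis, but the mathematical content is the same.
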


\begin{proof}
Let $\gg_n$ be the set of $k:=k(n)$ quantizers that $\ga_n$ contains from the closed interval $[-a, a]$. Then, by \cite{RR2},
\[\gg_n=\set{(-a+\frac{2i-1}{k} a, 0) : 1\leq i\leq k},\]
and the distortion error due to the set $\gg_n$ is given by $\frac{a^3}{6 k^2}$. Let $\gd_n$ be the set of $m$ quantizers  which occur due to the uniform distribution on the circular arc $\set{(\cos \gq, \sin \gq) : b\leq \gq\leq \pi-b}$. Then, by Proposition~\ref{prop32}, we have
\[\gd_n:=\Big\{ \frac {2m}{\pi-2b} \sin(\frac{\pi-2b}{2m}) \Big(\cos (b+(2j-1)\frac{\pi-2b}{2m}), \   \sin (b+(2j-1)\frac{\pi-2b}{2m})\Big) : 1\leq j\leq m\Big\},\]
and the corresponding distortion error is given by
\begin{align*} &\frac m {2 \pi } \int_b^{b+\frac{\pi -2 b}{m}} \rho\Big((\cos t, \sin t), \frac {2m}{\pi-2b} \sin(\frac{\pi-2b}{2m})  \Big(\cos (b+ \frac{\pi-2b}{2m}), \   \sin (b+ \frac{\pi-2b}{2m})\Big)\Big) \, dt\\
&=\frac{2 m^2 \cos \left(\frac{\pi -2 b}{m}\right)+(\pi -2 b)^2-2 m^2}{2 \pi  (\pi -2 b)^2}.\end{align*}
As mentioned in Remark~\ref{remark11}, let $(r, s)$ be the point in $\ga_n$ which lies in the interior of the right hand angle formed by the base and the semicircular arc. Then,
\begin{align*} (r, s)&=\frac {\frac{1}{4} \int_a^1(t, 0) \, dt+\frac 1{2 \pi} \int_0^{b} (\cos t,\sin t) \, dt}{\frac{1}{4} \int_a^11 \, dt+\frac 1{2 \pi} \int_0^{b} 1 \, dt}=\Big(\frac{-\pi  a^2+4 \sin b+\pi }{8 \pi  (\frac{1-a}{4}+\frac{b}{2 \pi })},\frac{\sin ^2(\frac{b}{2})}{\pi  (\frac{1-a}{4}+\frac{b}{2 \pi })}\Big).
\end{align*}
Due to symmetry, the point in $\ga_n$ which lies in the interior of the left hand angle formed by the base and the semicircular arc is given by $(-r, s)$, where
$r=\frac{-\pi  a^2+4 \sin b+\pi }{8 \pi  (\frac{1-a}{4}+\frac{b}{2 \pi })}$, and $s=\frac{\sin ^2(\frac{b}{2})}{\pi  (\frac{1-a}{4}+\frac{b}{2 \pi })}$.
The distortion error due to these two corner points is given by
\begin{align*} &2  \Big(\frac{1}{4} \int_a^1 \rho((t,0),(r, s)) \, dt+\frac 1{2\pi} \int_0^b \rho((\cos t,\sin t), (r, s)) \, dt\Big)\\
&=-\frac 1 {24\pi  (\pi  (a-1)-2 b)}(\pi ^2 a^4-8 \pi  a^3 b-4 \pi ^2 a^3+24 \pi  (a^2-1) \sin b+6 \pi ^2 a^2-24 \pi  a b\\
&-4 \pi ^2 a+48 b^2+32 \pi  b+96 \cos b+\pi ^2-96)
\end{align*}
Taking the union of $\gg_n$, $\gd_n$, and the set $\set{(r, s), (-r, s)}$, we obtain $\ga_n$, and summing up the corresponding distortion errors, we obtain the quantization error $V_n$. Thus, the proof of the theorem is complete.
\end{proof}

\begin{remark}
Let $n\in \D N$ be such that $n\geq 4$. Then, the positive integer $k$, given in Theorem~\ref{Th61}, depends on $n$. If $k$ is known, using Theorem~\ref{Th61}, one can easily determine the optimal set $\ga_n$ and the corresponding quantization error.
\end{remark}
Let us now give the following definition.

\begin{defi} \label{difi21}
Define the sequence $\set{a(n)}$ such that $a(n)=\lfloor n(\sqrt 2-1)\rfloor$ for $n\geq 1$, i.e.,
\begin{align*}
 \set{a(n)}_{n=1}^\infty=&\set{0, 0, 1, 1,2,2,2,3,3,4,4,4,5,5,6,6,7,7,7,8,8,9,9,9,10,10,11,11,12,12, \cdots},
\end{align*}
where $\lfloor x\rfloor$ represents the greatest integer not exceeding $x$.
\end{defi}

The following algorithm helps us to determine the exact value of $k$ mentioned in Theorem~\ref{Th61}.
\subsection{Algorithm}  Let $n\geq 4$, and let $V(n, k):=V_n$, as given by Theorem~\ref{Th61}, denote the distortion error if $\ga_n$ contains $k$ elements from the base of the semicircular disc. Let $\set{a(n)}$ be the sequence  defined by Definition~\ref{difi21}. Then, the algorithm runs as follows:

$(i)$ Write $k:=a(n)$.

$(ii)$ If $k=1$ go to step $(v)$, else step $(iii)$.

$(iii)$ If $V(n, k-1)<V(n, k)$ replace $k$ by $k-1$ and go to step $(ii)$, else step $(iv)$.

$(iv)$ If $V(n, k+1)<V(n, k)$ replace $k$ by $k+1$ and return, else step $(v)$.

$(v)$ End.

When the algorithm ends, then the value of $k$, obtained, is the exact value of $k$ that an optimal set $\ga_n$ contains from the base of the semicircular disc.
\begin{remark}
If $n=40$, then $a(n)=16$, and by the algorithm we also obtain $k=16$; if $n=51$, then $a(n)=21$, and by the algorithm we also obtain $k=21$. If $n=1000$, then  $a(n)=414$, and by the algorithm, we obtain $k=424$; if $n=2500$, then $a(n)=1035$, and by the algorithm, we obtain $k=1042$; and if $n=5000$, then $a(n)=2071$, and by the algorithm, we obtain $k=2083$. Thus, we see that with the help of the sequence and the algorithm we can easily determine the exact value of $k$ for any positive integer $n\geq 4$.
\end{remark}

The following questions still remain open.
\subsection{Open} Can one estimate the positive integer $k$ in terms of $n$ for all large $n$? Further, from such estimate can one obtain the asymptotics of the quantization error, and even obtain the existence of quantization dimension and quantization coefficient?

\section{Quantization for a uniform distribution on an elliptical curve} \label{sec3}

The basic definitions and notations that were defined in the first paragraph of Section~\ref{sec1} are also used in this section. As a prototype, we take the equation of the ellipse as $x_1^2+4x_2^2=4$, whose center is $O(0, 0)$, and the lengths of the major and the minor axes are, respectively, $2$ and $1$. By the elliptical curve, denoted by $L$, we mean the boundary of the ellipse $x_1^2+4x_2^2=4$. Let $L$ intersect the positive and negative directions of the $x_1$-axis at the points $A_1$ and $A_3$, and the positive and negative directions of the $x_2$-axis at the points $A_2$ and $A_4$, respectively. Let $P$ be the uniform distribution defined on $L$. Notice that the parametric equations of $L$ are given by $x_1=2 \cos \gq$, and $x_2=\sin \gq$ for $0\leq \gq\leq 2\pi$. Let $s$ represent the distance of any point on $L$ from the point $A_1$ tracing along the boundary $L$ in the counterclockwise direction. Then, $ds=\sqrt{dx_1^2+dx_2^2}=\sqrt{4 \sin ^2\theta +\cos ^2\theta}\,d\gq.$ Thus, the length of $L$ is given by
\[\int_L ds=\int_0^{2 \pi } \sqrt{4 \sin ^2\theta +\cos ^2\theta } \, d\theta=9.6884482205 \te{ (up to ten decimal places)}.\]
In the sequel, write $A:=9.6884482205$.
Hence, the probability density function (pdf) $f$ of the uniform distribution $P$ is given by $f(s):=f(x_1, x_2)=\frac 1{A}$ for all $(x_1, x_2)\in L$, and zero, otherwise.
Again, $dP(s)=P(ds)=f(x_1, x_2) ds=\frac 1{A} \sqrt{4 \sin ^2\theta +\cos ^2\theta } \, d\theta$.

Let us now prove the following lemma.
\begin{lemma} \label{lemma00}
Let $X$ be a continuous random variable with uniform distribution on $L$. Then,
$E(X)=(0, 0) \te{ and } V:=V(X)=2.260230080.$
\end{lemma}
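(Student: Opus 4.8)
The plan is to pin down $E(X)$ from the two axial symmetries of the ellipse and then reduce $V(X)$ to a single integral in the angular parameter $\gq$, which is evaluated numerically.

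First I would write the mean using the parametrization $x_1=2\cos\gq$, $x_2=\sin\gq$ and the arc-length density $dP=\frac 1A\sqrt{4\sin^2\gq+\cos^2\gq}\,d\gq$, namely
\[E(X)=\frac 1A\int_0^{2\pi}(2\cos\gq,\ \sin\gq)\,\sqrt{4\sin^2\gq+\cos^2\gq}\,d\gq.\]
Since $\sqrt{4\sin^2\gq+\cos^2\gq}=\sqrt{1+3\sin^2\gq}$ depends on $\gq$ only through $\sin^2\gq$, this weight is invariant both under $\gq\mapsto\pi-\gq$ (which reverses the sign of $2\cos\gq$) and under $\gq\mapsto 2\pi-\gq$ (which reverses the sign of $\sin\gq$); integrating over a full period, each coordinate cancels, so $E(X)=(0,0)$.

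For the variance, since $E(X)=(0,0)$ we have $V=V(X)=E\|X\|^2=\int_L(x_1^2+x_2^2)\,dP$. Using the identities $x_1^2+x_2^2=4\cos^2\gq+\sin^2\gq=4-3\sin^2\gq$ and $4\sin^2\gq+\cos^2\gq=1+3\sin^2\gq$, together with the invariance of the resulting integrand under $\gq\mapsto\pi-\gq$ and $\gq\mapsto-\gq$, this collapses to
\[V=\frac 1A\int_0^{2\pi}(4-3\sin^2\gq)\sqrt{1+3\sin^2\gq}\,d\gq=\frac 4A\int_0^{\pi/2}(4-3\sin^2\gq)\sqrt{1+3\sin^2\gq}\,d\gq.\]
Evaluating this, and the normalizing constant $A=\int_0^{2\pi}\sqrt{1+3\sin^2\gq}\,d\gq$, numerically to the required precision then yields $V=2.260230080$.

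The only genuine obstacle is that neither $A$ nor the numerator integral has an elementary antiderivative — both reduce to complete elliptic integrals of the first and second kind with modulus $\tfrac{\sqrt 3}{2}$ — so $V$ must be reported as a decimal rather than in closed form. Apart from that, the argument is just the trigonometric bookkeeping above together with the two symmetry reductions, exactly parallel to the variance computations for the hexagon and the semicircle earlier in the paper.
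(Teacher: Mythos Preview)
Your proof is correct and follows essentially the same route as the paper: set up the integrals in the angular parameter $\gq$ with the arc-length weight $\sqrt{4\sin^2\gq+\cos^2\gq}$, obtain $E(X)=(0,0)$, and evaluate $V(X)$ numerically. The only difference is cosmetic --- you spell out the symmetry reasons for the vanishing of $E(X)$ and simplify the variance integrand to $(4-3\sin^2\gq)\sqrt{1+3\sin^2\gq}$ before the numerical step, whereas the paper simply records the integrals and their numerical values.
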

\begin{proof} We have,
\begin{align*}
E(X) &=\int_L(x_1 \, i+x_2 \, j) dP =\frac{1}{A}\int_0^{2 \pi } \sqrt{4 \sin ^2\theta +\cos ^2\theta } (2\cos \gq,\sin \gq)\, d\theta=(0, 0),
\end{align*}
and
\begin{align*} V(X)&=E\|X-E(X)\|^2= \int_L\rho((x_1, x_2), E(X))dP= \frac 1 A \int_{L}\rho((2\cos \gq, \sin \gq), (0, 0))ds\\
&=\frac 1 A \int_0^{2 \pi } \sqrt{4 \sin ^2\gq+\cos ^2\gq} \,\rho((2 \cos \gq,\sin \gq), (0,0))\, d\theta=2.260230080.
\end{align*}
Hence, the proof of the lemma is complete.
\end{proof}

\begin{remark} \label{remark3} Proceeding similarly as Remark~\ref{remark45}, we see that the optimal set of one-mean is the set $\set{(0, 0)}$, and the corresponding quantization error is the variance $V:=V(X)$ of the random variable $X$.
\end{remark}

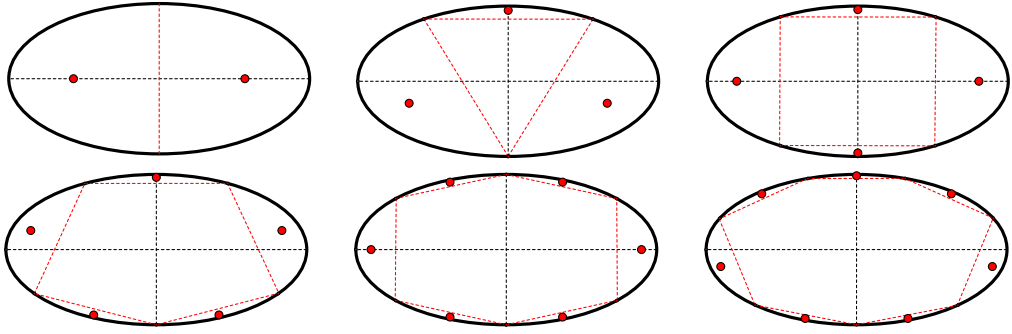
\begin{figure}
\begin{tikzpicture}[line cap=round,line join=round,>=triangle 45,x=1.0cm,y=1.0cm]
\clip(-2.164474304911254,-1.1642922922893124) rectangle (2.1269429504593886,1.1870099205617441);
\draw [rotate around={0.:(0.,0.)},line width=1.2pt] (0.,0.) ellipse (2.cm and 1.cm);
\draw [dash pattern=on 1pt off 1pt] (2.,0.)-- (-2.,0.);
\draw [dash pattern=on 1pt off 1pt,color=ffqqqq] (0.,-1.)-- (0.,1.);
\begin{scriptsize}
\draw [fill=ffqqqq] (-1.13964,0.) circle (1.5pt);
\draw [fill=ffqqqq] (1.13964,0.) circle (1.5pt);
\end{scriptsize}
\end{tikzpicture}\quad
\begin{tikzpicture}[line cap=round,line join=round,>=triangle 45,x=1.0cm,y=1.0cm]
\clip(-2.1008189434403577,-1.1321475309991456) rectangle (2.1397400374673494,1.0742849031370176);
\draw [rotate around={0.:(0.,0.)},line width=1.2pt] (0.,0.) ellipse (2.cm and 1.cm);
\draw [dash pattern=on 1pt off 1pt] (2.,0.)-- (-2.,0.);
\draw [dash pattern=on 1pt off 1pt] (0.,-1.)-- (0.,1.);
\draw [dash pattern=on 1pt off 1pt,color=ffqqqq] (-1.1206946198484502,0.8282577450653177)-- (1.12792,0.825804);
\draw [dash pattern=on 1pt off 1pt,color=ffqqqq] (1.12792,0.825804)-- (0.,-1.);
\draw [dash pattern=on 1pt off 1pt,color=ffqqqq] (0.,-1.)-- (-1.1206946198484502,0.8282577450653177);
\begin{scriptsize}
\draw [fill=ffqqqq] (-2.,0.) circle (0.5pt);
\draw [fill=ffqqqq] (0.,-1.) circle (0.5pt);
\draw [fill=ffqqqq] (0.,0.943319) circle (1.5pt);
\draw [fill=ffqqqq] (-1.3166,-0.29241) circle (1.5pt);
\draw [fill=ffqqqq] (1.3166,-0.29241) circle (1.5pt);
\draw [fill=ffqqqq] (-1.12792,0.825804) circle (0.5pt);
\draw [fill=ffqqqq] (1.12792,0.825804) circle (0.5pt);
\draw [fill=ffqqqq] (0.,-1.) circle (0.5pt);
\draw [fill=ffqqqq] (-1.1206946198484502,0.8282577450653177) circle (0.5pt);
\draw [fill=ffqqqq] (1.12792,0.825804) circle (0.5pt);
\end{scriptsize}
\end{tikzpicture}\quad
\begin{tikzpicture}[line cap=round,line join=round,>=triangle 45,x=1.0cm,y=1.0cm]
\clip(-2.093843171236543,-1.1321475309991456) rectangle (2.146715809671164,1.0742849031370176);
\draw [rotate around={0.:(0.,0.)},line width=1.2pt] (0.,0.) ellipse (2.cm and 1.cm);
\draw [dash pattern=on 1pt off 1pt] (2.,0.)-- (-2.,0.);
\draw [dash pattern=on 1pt off 1pt] (0.,-1.)-- (0.,1.);
\draw [dash pattern=on 1pt off 1pt,color=ffqqqq] (-1.0320353998483365,0.8565779201946379)-- (1.0402355022128402,0.8540945644271484);
\draw [dash pattern=on 1pt off 1pt,color=ffqqqq] (1.0402355022128402,0.8540945644271484)-- (1.0229785844486008,-0.859289650781321);
\draw [dash pattern=on 1pt off 1pt,color=ffqqqq] (-1.0398336759895928,-0.8542168820443043)-- (-1.0320353998483365,0.8565779201946379);
\draw [dash pattern=on 1pt off 1pt,color=ffqqqq] (-1.0398336759895928,-0.8542168820443043)-- (1.0229785844486008,-0.859289650781321);
\begin{scriptsize}
\draw [fill=ffqqqq] (-2.,0.) circle (0.5pt);
\draw [fill=ffqqqq] (0.,-1.) circle (0.5pt);
\draw [fill=ffqqqq] (1.60986,0.) circle (1.5pt);
\draw [fill=ffqqqq] (0.,0.953301) circle (1.5pt);
\draw [fill=ffqqqq] (-1.60986,0.) circle (1.5pt);
\draw [fill=ffqqqq] (0.,-0.953301) circle (1.5pt);
\draw [fill=ffqqqq] (1.03023,0.857121) circle (0.5pt);
\draw [fill=ffqqqq] (1.03023,-0.857121) circle (0.5pt);
\draw [fill=ffqqqq] (-1.03023,0.857121) circle (0.5pt);
\draw [fill=ffqqqq] (-1.03023,-0.857121) circle (0.5pt);
\end{scriptsize}
\end{tikzpicture}\quad
\begin{tikzpicture}[line cap=round,line join=round,>=triangle 45,x=1.0cm,y=1.0cm]
\clip(-2.1008189434403577,-1.1321475309991456) rectangle (2.1397400374673494,1.0742849031370176);
\draw [rotate around={0.:(0.,0.)},line width=1.2pt] (0.,0.) ellipse (2.cm and 1.cm);
\draw [dash pattern=on 1pt off 1pt] (2.,0.)-- (-2.,0.);
\draw [dash pattern=on 1pt off 1pt] (0.,-1.)-- (0.,1.);
\draw [dash pattern=on 1pt off 1pt,color=ffqqqq] (-0.9505294036117456,0.8798428627984517)-- (0.9450959813396136,0.8813049395719558);
\draw [dash pattern=on 1pt off 1pt,color=ffqqqq] (0.9450959813396136,0.8813049395719558)-- (1.6286696475978306,-0.580395377952297);
\draw [dash pattern=on 1pt off 1pt,color=ffqqqq] (1.6286696475978306,-0.580395377952297)-- (0.,-1.);
\draw [dash pattern=on 1pt off 1pt,color=ffqqqq] (0.,-1.)-- (-1.6212490047154264,-0.585566320904225);
\draw [dash pattern=on 1pt off 1pt,color=ffqqqq] (-1.6212490047154264,-0.585566320904225)-- (-0.9505294036117456,0.8798428627984517);
\begin{scriptsize}
\draw [fill=ffqqqq] (-2.,0.) circle (0.5pt);
\draw [fill=ffqqqq] (0.,-1.) circle (0.5pt);
\draw [fill=ffqqqq] (0.,0.960639) circle (1.5pt);
\draw [fill=ffqqqq] (-1.669,0.253562) circle (1.5pt);
\draw [fill=ffqqqq] (-0.832677,-0.869617) circle (1.5pt);
\draw [fill=ffqqqq] (0.832677,-0.869617) circle (1.5pt);
\draw [fill=ffqqqq] (1.669,0.253562) circle (1.5pt);
\draw [fill=ffqqqq] (-0.950097,0.87996) circle (0.5pt);
\draw [fill=ffqqqq] (-1.62245,-0.584733) circle (0.5pt);
\draw [fill=ffqqqq] (0.950097,0.87996) circle (0.5pt);
\draw [fill=ffqqqq] (1.62245,-0.584733) circle (0.5pt);
\end{scriptsize}
\end{tikzpicture}\quad
\begin{tikzpicture}[line cap=round,line join=round,>=triangle 45,x=1.0cm,y=1.0cm]
\clip(-2.1008189434403577,-1.1321475309991456) rectangle (2.1397400374673494,1.0742849031370176);
\draw [rotate around={0.:(0.,0.)},line width=1.2pt] (0.,0.) ellipse (2.cm and 1.cm);
\draw [dash pattern=on 1pt off 1pt] (2.,0.)-- (-2.,0.);
\draw [dash pattern=on 1pt off 1pt] (0.,-1.)-- (0.,1.);
\draw [dash pattern=on 1pt off 1pt,color=ffqqqq] (0.,1.)-- (-1.4633723874508653,0.6816416315862674);
\draw [dash pattern=on 1pt off 1pt,color=ffqqqq] (-1.4633723874508653,0.6816416315862674)-- (-1.4763593261235566,-0.6746041691555127);
\draw [dash pattern=on 1pt off 1pt,color=ffqqqq] (-1.4763593261235566,-0.6746041691555127)-- (0.,-1.);
\draw [dash pattern=on 1pt off 1pt,color=ffqqqq] (0.,-1.)-- (1.4762223220725172,-0.6746791192512974);
\draw [dash pattern=on 1pt off 1pt,color=ffqqqq] (1.4762223220725172,-0.6746791192512974)-- (1.4682492885872152,0.6790147322707986);
\draw [dash pattern=on 1pt off 1pt,color=ffqqqq] (1.4682492885872152,0.6790147322707986)-- (0.,1.);
\begin{scriptsize}
\draw [fill=ffqqqq] (-2.,0.) circle (0.5pt);
\draw [fill=ffqqqq] (0.,-1.) circle (0.5pt);
\draw [fill=ffqqqq] (1.7975,0.) circle (1.5pt);
\draw [fill=ffqqqq] (0.748205,0.897609) circle (1.5pt);
\draw [fill=ffqqqq] (-0.748205,0.897609) circle (1.5pt);
\draw [fill=ffqqqq] (-1.7975,0.) circle (1.5pt);
\draw [fill=ffqqqq] (-0.748205,-0.897609) circle (1.5pt);
\draw [fill=ffqqqq] (0.748205,-0.897609) circle (1.5pt);
\draw [fill=ffqqqq] (1.4693,0.678447) circle (0.5pt);
\draw [fill=ffqqqq] (0.,1.) circle (0.5pt);
\draw [fill=ffqqqq] (1.4693,-0.678447) circle (0.5pt);
\draw [fill=ffqqqq] (0.,-1.) circle (0.5pt);
\draw [fill=ffqqqq] (-1.4693,0.678447) circle (0.5pt);
\draw [fill=ffqqqq] (-1.4693,-0.678447) circle (0.5pt);
\end{scriptsize}
\end{tikzpicture}\quad
\begin{tikzpicture}[line cap=round,line join=round,>=triangle 45,x=1.0cm,y=1.0cm]
\clip(-2.1008189434403577,-1.1321475309991456) rectangle (2.1397400374673494,1.0742849031370176);
\draw [rotate around={0.:(0.,0.)},line width=1.2pt] (0.,0.) ellipse (2.cm and 1.cm);
\draw [dash pattern=on 1pt off 1pt] (2.,0.)-- (-2.,0.);
\draw [dash pattern=on 1pt off 1pt] (0.,-1.)-- (0.,1.);
\draw [dash pattern=on 1pt off 1pt,color=ffqqqq] (-0.6525079538341584,0.9452821497022941)-- (-1.8260324049603411,0.40792329427073454);
\draw [dash pattern=on 1pt off 1pt,color=ffqqqq] (-1.8260324049603411,0.40792329427073454)-- (-1.3502452440318975,-0.7377055274581522);
\draw [dash pattern=on 1pt off 1pt,color=ffqqqq] (-1.3502452440318975,-0.7377055274581522)-- (0.,-1.);
\draw [dash pattern=on 1pt off 1pt,color=ffqqqq] (0.,-1.)-- (1.3385108396349272,-0.74303242395265);
\draw [dash pattern=on 1pt off 1pt,color=ffqqqq] (1.3385108396349272,-0.74303242395265)-- (1.8162943212343567,0.41865108940853957);
\draw [dash pattern=on 1pt off 1pt,color=ffqqqq] (1.8162943212343567,0.41865108940853957)-- (0.6399168715870068,0.9474315803078209);
\draw [dash pattern=on 1pt off 1pt,color=ffqqqq] (0.6399168715870068,0.9474315803078209)-- (-0.6525079538341584,0.9452821497022941);
\begin{scriptsize}
\draw [fill=ffqqqq] (-2.,0.) circle (0.5pt);
\draw [fill=ffqqqq] (0.,0.98224) circle (1.5pt);
\draw [fill=ffqqqq] (-1.26066,0.741926) circle (1.5pt);
\draw [fill=ffqqqq] (-1.80513,-0.225347) circle (1.5pt);
\draw [fill=ffqqqq] (-0.68335,-0.916091) circle (1.5pt);
\draw [fill=ffqqqq] (0.68335,-0.916091) circle (1.5pt);
\draw [fill=ffqqqq] (1.80513,-0.225347) circle (1.5pt);
\draw [fill=ffqqqq] (1.26066,0.741926) circle (1.5pt);
\draw [fill=ffqqqq] (-0.646392,0.946332) circle (0.5pt);
\draw [fill=ffqqqq] (-1.8168,0.418098) circle (0.5pt);
\draw [fill=ffqqqq] (-1.34776,-0.73884) circle (0.5pt);
\draw [fill=ffqqqq] (0.646392,0.946332) circle (0.5pt);
\draw [fill=ffqqqq] (1.8168,0.418098) circle (0.5pt);
\draw [fill=ffqqqq] (1.34776,-0.73884) circle (0.5pt);
\draw [fill=ffqqqq] (0.,-1.) circle (0.5pt);
\end{scriptsize}
\end{tikzpicture}
\caption{Points in an optimal set of $n$-means for $2\leq n\leq 7$.} \label{Fig4}
\end{figure}

\begin{remark} \label{remark90}
The ellipse has two lines of symmetry: the major axis and the minor axis, and the probability distribution is uniform. To calculate the optimal sets of $n$-means for any positive integer $n\geq 2$, we will use this information. This will help us to avoid too much technicality in the proof of the following propositions.
\end{remark}
\begin{prop} \label{prop70}
The optimal set of two-means is $\set{(1.13964, 0), (-1.13964, 0)}$ with quantization error $V_2=0.961441$.
\end{prop}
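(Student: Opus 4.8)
The plan is to follow the scheme of Proposition~\ref{prop01}, but to shorten the case analysis by using the two axes of symmetry of the ellipse noted in Remark~\ref{remark90}. Let $\{P,Q\}$ be an optimal set of two-means for $P$, and let $\ell$ be the common boundary of the Voronoi regions of $P$ and $Q$; being the perpendicular bisector of the segment $PQ$, it is a straight line and hence meets the elliptical curve $L$ in exactly two points $D$ and $E$, with parameter values $\theta=\alpha$ and $\theta=\beta$, so that $\tilde d=(2\cos\alpha,\sin\alpha)$ and $\tilde e=(2\cos\beta,\sin\beta)$. Since the uniform distribution on $L$ is invariant under reflection in both the $x_1$-axis and the $x_2$-axis, a reflection of an optimal configuration is again optimal with the same distortion error; this restricts the search for $(\alpha,\beta)$ and leaves essentially two candidate configurations to analyze, namely the one in which $\ell$ is the minor axis (so $\{D,E\}=\{A_2,A_4\}$ and the two quantizers lie on the $x_1$-axis) and the one in which $\ell$ is the major axis (so $\{D,E\}=\{A_1,A_3\}$ and the two quantizers lie on the $x_2$-axis); a slanted $\ell$ is to be excluded by checking that the canonical equations below admit no such solution of smaller distortion.

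First I would write
\[
\tilde p=E(X:X\in\widehat{DE})=\frac{\int_{\alpha}^{\beta}(2\cos\theta,\sin\theta)\sqrt{4\sin^2\theta+\cos^2\theta}\,d\theta}{\int_{\alpha}^{\beta}\sqrt{4\sin^2\theta+\cos^2\theta}\,d\theta},
\]
and symmetrically $\tilde q=E(X:X\in\widehat{ED})$ over the complementary arc, and then impose the two canonical equations $\rho(\tilde d,\tilde p)-\rho(\tilde d,\tilde q)=0$ and $\rho(\tilde e,\tilde p)-\rho(\tilde e,\tilde q)=0$ coming from $D,E\in\ell$. Because $\sqrt{4\sin^2\theta+\cos^2\theta}$ has no elementary antiderivative, all of these arc-length and first-moment integrals, as well as the normalizing constant $A=9.6884482205$, must be evaluated numerically. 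Solving the resulting system in $(\alpha,\beta)$ then yields the two symmetric critical configurations: for the minor-axis cut, reflection in the $x_1$-axis forces $\tilde p,\tilde q$ onto the $x_1$-axis and the centroid formula above evaluates to $\tilde p=(1.13964,0)$, $\tilde q=(-1.13964,0)$; for the major-axis cut, reflection in the $x_2$-axis puts both quantizers on the $x_2$-axis.

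Finally, for each candidate I would compute the distortion $\int_L\min_{a\in\{\tilde p,\tilde q\}}\|x-a\|^2\,dP=\frac{1}{A}\int_0^{2\pi}\sqrt{4\sin^2\theta+\cos^2\theta}\;\min_{a}\rho((2\cos\theta,\sin\theta),a)\,d\theta$ by numerical integration and compare the two values; the minor-axis split yields $V_2=0.961441$, which is strictly smaller than the value obtained for the major-axis split (splitting the longer, $x_1$-direction of the ellipse removes more of the variance). Hence $\{(1.13964,0),(-1.13964,0)\}$ is an optimal set of two-means and $V_2=0.961441$. I expect the main obstacle to be twofold: the relevant integrals are elliptic and have no closed form, so the computation of $\tilde p$, $\tilde q$, and $V_2$ is unavoidably numerical; and one must argue carefully --- using the two reflection symmetries together with the form of the canonical equations --- that no slanted bisector $\ell$ yields a critical configuration of strictly smaller distortion, so that the two symmetric cases genuinely exhaust the candidates for an optimal two-means partition.
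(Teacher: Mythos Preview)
Your proposal is correct and follows essentially the same route as the paper: use the symmetry noted in Remark~\ref{remark90} to reduce the analysis to the two axis-aligned bisectors, compute the centroids and distortions numerically for each, and compare. The only minor streamlining you miss is that the paper uses the center-symmetry to assume from the outset that $\ell$ passes through the origin, so that $D$ and $E$ are antipodal with parameters $\theta=b$ and $\theta=\pi+b$; this collapses your two unknowns $(\alpha,\beta)$ to a single parameter $b$ and a single canonical equation, whose solutions $b\in\{0,\tfrac{\pi}{2},\pi\}$ automatically exclude any slanted chord through the center---so the separate ``slanted $\ell$'' discussion you flag as an obstacle is handled for free.
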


\begin{proof}
Let $\ga:=\set{\tilde p, \tilde q}$ be an optimal set of two-means. Due to Remark~\ref{remark90}, we can assume that the boundary of their Voronoi regions passes through the center of the ellipse, in other words, we can assume that the boundary of the Voronoi regions cut the ellipse at the two points $D$ and $E$ given by the parameters $\gq=b$ and $\gq=\pi+b$, respectively, where $0\leq b\leq \pi$. Then, we have $\tl d=(2\cos b, \sin b)$, and
\[\tl p=\frac{\int_b^{\pi+b} \sqrt{4 \sin ^2\gq+\cos ^2\gq} (2 \cos \gq,\sin \gq)\, d\theta }{\int_b^{\pi+b} \sqrt{4 \sin ^2\gq+\cos ^2\gq} \, d\theta }, \te{ and } \tl q=\frac{\int_{\pi+b}^{2 \pi+b} \sqrt{4 \sin ^2\gq+\cos ^2\gq} (2 \cos \gq,\sin \gq) \, d\theta }{\int_{\pi+b }^{2 \pi+b } \sqrt{4 \sin ^2\gq+\cos ^2\gq} \, d\theta }.\]
Solving the canonical equation $\rho(\tl d, \tl p)-\rho(\tl d, \tl q)=0$, we have $b=0, \frac \pi 2, \pi$. Notice that $b=0$ and $b=\pi$ are reflections of each other about the origin. Thus, the following two cases can arise:

Case~1. $b=0$.

In this case, we have  $\tl p=(0, 0.705665)$, and $\tl q=(0, -0.705665)$ with the distortion error
\[\int\min_{a\in \ga} \|x-a\|^2 dP=\frac 2 {A}\int_{0}^\pi\rho((2\cos \gq, \sin \gq), (0, 0.705665)) ds=1.76227.\]

Case~2. $b=\frac \pi 2$.

In this case, we have $\tl p=(-1.13964, 0)$, and $\tl q=(1.13964, 0)$ with the distortion error
\[\int\min_{a\in \ga} \|x-a\|^2 dP=\frac 2 {A}\int_{ \frac \pi 2}^{\pi +\frac \pi 2}\rho((2\cos\gq, \sin \gq), (-1.13964, 0)) ds=0.961441.\]

Comparing the distortion errors, we see that the set $\set{(1.13964, 0), (-1.13964, 0)}$ forms the optimal set of two-means  (see Figure~\ref{Fig4}) with quantization error $V_2=0.961441$, which yields the proposition.
\end{proof}

In the following two propositions we state and prove the optimal sets of six- and seven-means.
\begin{prop} \label{prop71}
The optimal set of six-means is
\begin{align*}
\set{(1.7975,0), &(0.748205, 0.897609), (-0.748205, 0.897609), \\
&(-1.7975,0),  (-0.748205, -0.897609), (0.748205, -0.897609)}
\end{align*}
 with quantization error $V_6=0.198794$.
\end{prop}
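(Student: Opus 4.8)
The plan is to imitate the proof of Proposition~\ref{prop70}. First I would use the two axes of symmetry of the ellipse together with the uniformity of $P$ (Remark~\ref{remark90}) to restrict attention to sets of six points invariant under the reflections across the major and the minor axes, and then classify the finitely many such configurations. The group generated by these two reflections has order four, so every orbit of quantizers has size $1$ (only the center $O$), size $2$ (a pair symmetric across one axis, hence lying on that axis), or size $4$; since $O$ cannot be repeated and $6=4+2=2+2+2$, a symmetric candidate set of six-means is of one of the types: (I) four quantizers off the axes together with two on the major axis; (II) the same with the two axial quantizers on the minor axis; (III) all six quantizers lying on the axes. Unlike the circular case of Proposition~\ref{prop32}, the uniform distribution on $L$ is \emph{not} uniform in the parameter $\gq$, since $dP=\frac1A\sqrt{4\sin^2\gq+\cos^2\gq}\,d\gq$, so no closed form will be available and the computations have to be carried out numerically.

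In each type every quantizer equals the conditional expectation of $X$ over its Voronoi arc (Proposition~\ref{prop0}$(iii)$), and a Voronoi arc is determined by its endpoints. In type~(I) --- the one claimed to be optimal --- the full symmetry forces the Voronoi arc of the quantizer near $A_1=(2,0)$ to be $\{(2\cos\gq,\sin\gq):|\gq|\le b\}$ for some $0<b<\frac\pi2$, the arc near $A_3=(-2,0)$ to be its image under the half-turn, and the remaining upper arc $b\le\gq\le\pi-b$ to be split at $\gq=\frac\pi2$; hence there is a \emph{single} free parameter $b$, and a single canonical equation that is not satisfied automatically, namely
\[\rho(\tl d,\tl p_1)-\rho(\tl d,\tl p_2)=0,\qquad \tl d=(2\cos b,\sin b),\]
where $\tl p_1$ and $\tl p_2$ are the centroids, with respect to $dP$, of the arcs $|\gq|\le b$ and $b\le\gq\le\frac\pi2$. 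Solving this numerically determines $b$ (one finds $b\approx 0.75$), and substituting it gives $\tl p_1=(1.7975,0)$, $\tl p_2=(0.748205,0.897609)$, and the remaining four quantizers by symmetry; summing the six equal arc contributions yields the distortion $0.198794$. Types~(II) and (III) are treated in the same manner --- type~(II) again involves a single parameter, the half-arc about $(0,1)$ --- and their distortions are computed likewise.

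Finally I would compare the distortion errors of the finitely many cases: the minimum is attained by type~(I), which gives the asserted optimal set with $V_6=0.198794$ (see Figure~\ref{Fig4}). As in Proposition~\ref{prop70}, a canonical equation may have several admissible roots (reflections of one another, plus possibly degenerate ones as $b\to0$ or $b\to\frac\pi2$), so each admissible root must be substituted back and its distortion evaluated, and one should verify that the chosen root is a genuine minimum of the corresponding one-parameter distortion function rather than a saddle or a boundary value.

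The main obstacle is twofold. Computationally, the arc-length weight $\sqrt{4\sin^2\gq+\cos^2\gq}$ makes all the centroid and distortion integrals non-elementary, so the canonical equations cannot be solved in closed form and every comparison between configurations must be performed numerically to sufficient precision. Conceptually, the one delicate point is the a~priori restriction to symmetric configurations --- adopted exactly as in Remark~\ref{remark90} --- together with the verification that the list (I)--(III) of symmetric six-point types is exhaustive; granting these, the remaining argument is a finite and essentially mechanical check.
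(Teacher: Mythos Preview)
Your approach is correct and follows the same overall strategy as the paper --- reduce by symmetry to finitely many configurations, write the centroids of the Voronoi arcs with the weight $\sqrt{4\sin^2\gq+\cos^2\gq}$, solve the canonical equations numerically, and compare distortions. The one substantive difference is that you impose the full reflection group $D_2$ from the outset, so in your type~(I) the split point of the upper arc is forced to be $\gq=\tfrac\pi2$ and only a single parameter $b$ remains; the paper instead imposes only reflection across the major axis in its Case~1, carries two parameters $b,c$ and two canonical equations, and then \emph{discovers} $c=\tfrac\pi2$ as part of the solution (and symmetrically in its Case~2). Your version is therefore a mild simplification. You also list a type~(III) (all six quantizers on the axes, $6=2+2+2$) that the paper does not treat explicitly; including it makes the case analysis more complete, though it is easily seen to be dominated.
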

\begin{proof}
Let $\ga$ be an optimal set of six-means. Due to Remark~\ref{remark90}, the following two cases can arise:

Case~1. Two points of $\ga$ are on the major axis, two are above and two are below the major axis.

Again, due to Remark~\ref{remark90}, the two points which are on the major axis are reflections of each other with respect to the origin, and the set of points below are the reflections of the set of points above with respect to the major axis. Let the point which lie on the positive direction of the major axis be $\tl p$, and the two points above the major axis be given by $\tl q$, and $\tl r$. Let the boundary of the Voronoi regions of $\tl p$ and $\tl q$ be given by $\tl d$ with parameter $\gq=b$, and the boundary of the Voronoi regions of $\tl q$ and $\tl r$ be given by $\tl e$ with parameter $\gq=c$. Then, we have
\begin{align*}
\tl d & =(2 \cos b, \sin b), \ \tl e=(2\cos c, \sin c),  \ \tl p=\frac{\int_{2\pi-b}^{2\pi+b} \sqrt{4 \sin ^2\gq+\cos ^2\gq} (2 \cos \gq,\sin \gq)\, d\theta }{\int_{2\pi-b}^{2\pi+b} \sqrt{4 \sin ^2\gq+\cos ^2\gq} \, d\theta }, \\
\tl q& =\frac{\int_{b}^{c} \sqrt{4 \sin ^2\gq+\cos ^2\gq} (2 \cos \gq,\sin \gq) \, d\theta }{\int_{b}^{c} \sqrt{4 \sin ^2\gq+\cos ^2\gq} \, d\theta }, \te{ and } \tl r =\frac{\int_{c}^{\pi-b} \sqrt{4 \sin ^2\gq+\cos ^2\gq} (2 \cos \gq,\sin \gq) \, d\theta }{\int_{c}^{\pi-b} \sqrt{4 \sin ^2\gq+\cos ^2\gq} \, d\theta }.
\end{align*}
Solving the canonical equations $\rho(\tl d,\tl p)-\rho(\tl d, \tl q)=0$ and $\rho(\tl e, \tl q)-\rho(\tl e, \tl r)=0$, we obtain $b=0.745647$, and $c =1.5708$, yielding \[\set{\tl p, \tl q, \tl r}=\set{(1.7975, 0), (0.748205, 0.897609), (-0.748205, 0.897609)}.\] Thus, due to reflection, we can obtain all the elements of $\ga$, and the corresponding distortion error is given by
\[\int\min_{a\in \ga} \|x-a\|^2 dP=2 (\te{distortion error due to }\tl p, \tl q, \tl r)=0.198794.\]

Case~2. Two points of $\ga$ are on the minor axis, two are to the right and two are to the left of the minor axis.

Due to Remark~\ref{remark90}, the two points which are on the minor axis are reflections of each other with respect to the origin, and the set of points to the right are the reflections of the set of points to the left with respect to the minor axis. Let the point which lie on the positive direction of the minor axis be $\tl p$, and the two points to the left of the minor axis be given by $\tl q$, and $\tl r$. Let the boundary of the Voronoi regions of $\tl p$ and $\tl q$ be given by $\tl d$ with parameter $\gq=b$, and the boundary of the Voronoi regions of $\tl q$ and $\tl r$ be given by $\tl e$ with parameter $\gq=c$. Then, we have
\begin{align*}
\tl d & =(2 \cos b, \sin b), \ \tl e=(2\cos c, \sin c),  \ \tl p=\frac{\int_{\pi-b}^{b} \sqrt{4 \sin ^2\gq+\cos ^2\gq} (2 \cos \gq,\sin \gq)\, d\theta }{\int_{\pi-b}^{b} \sqrt{4 \sin ^2\gq+\cos ^2\gq} \, d\theta }, \\
\tl q& =\frac{\int_{c}^{b} \sqrt{4 \sin ^2\gq+\cos ^2\gq} (2 \cos \gq,\sin \gq) \, d\theta }{\int_{c}^{b} \sqrt{4 \sin ^2\gq+\cos ^2\gq} \, d\theta }, \te{ and } \tl r =\frac{\int_{c}^{2\pi-b} \sqrt{4 \sin ^2\gq+\cos ^2\gq} (2 \cos \gq,\sin \gq) \, d\theta }{\int_{c}^{2\pi-b} \sqrt{4 \sin ^2\gq+\cos ^2\gq} \, d\theta }.
\end{align*}
Solving the canonical equations $\rho(\tl d,\tl p)-\rho(\tl d, \tl q)=0$ and $\rho(\tl e, \tl q)-\rho(\tl e, \tl r)=0$, we obtain $b=1.97683$, and $c =3.14159$, yielding \[\set{\tl p, \tl q, \tl r}=\set{(0, 0.973196), (-1.50317, 0.57481), (-1.50317, -0.57481)}.\] Thus, due to reflection, we can obtain all the elements of $\ga$, and the corresponding distortion error is given by
\[\int\min_{a\in \ga} \|x-a\|^2 dP=2 (\te{distortion error due to }\tl p, \tl q, \tl r)=0.209898.\]

Comparing the distortion errors in Case~1 and Case~2, we see that the set $\ga$ in Case~1 forms the optimal set of six-means  (see Figure~\ref{Fig4}) with quantization error $V_6=0.198794$. Thus, the proof of the proposition is complete.
\end{proof}

\begin{prop} \label{prop72}
There are two different optimal sets of seven-means, one of them is
\begin{align*}
\set{(0, 0.98224), & (-1.26066, 0.741926), (-1.80513, -0.225347), (-0.68335, -0.916091), \\
&(0.68335, -0.916091), (1.80513, -0.225347), (1.26066, 0.741926)}
\end{align*}
 with quantization error $V_7=0.152179$.
\end{prop}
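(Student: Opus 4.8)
The plan is to follow closely the case-analysis scheme of Proposition~\ref{prop71}, using the two axes of symmetry of the ellipse together with the uniformity of $P$ (Remark~\ref{remark90}). The new feature is that $n=7$ is odd: a set of quantizers that is invariant under reflection in an axis must contain an \emph{odd} number of its points on that axis. Hence, for each of the major and the minor axis, the candidate configurations split into the sub-cases where $1$, $3$, or $5$ quantizers lie on the axis while the remaining points form reflection pairs; this is a finite list to be checked. One also notes that a configuration symmetric in the minor axis need not be symmetric in the major axis, so the two axes give genuinely different families of candidates, and one likewise records the few configurations symmetric in both axes (which must then carry a quantizer at the centre $(0,0)$).

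For a given candidate I would parametrize $L$ by $\gq\in[0,2\pi)$ as in the text, let $\tl d,\tl e,\dots$ (with parameters $\gq=b,c,\dots$) be the points of $L$ bounding consecutive Voronoi arcs, express each quantizer as the conditional expectation
\[\tl p=\frac{\int \sqrt{4\sin^2\gq+\cos^2\gq}\,(2\cos\gq,\sin\gq)\,d\gq}{\int \sqrt{4\sin^2\gq+\cos^2\gq}\,d\gq}\]
over the appropriate $\gq$-interval, and impose the canonical equations $\rho(\tl d,\tl p)-\rho(\tl d,\tl q)=0$, $\rho(\tl e,\tl q)-\rho(\tl e,\tl r)=0$, and so on. Solving this system numerically (exactly as the parameters $b,c$ are found in Proposition~\ref{prop71}) determines the boundary parameters, hence the quantizers, after which $\int\min_{a\in\ga}\|x-a\|^2\,dP$ is evaluated for that configuration.

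Comparing the distortion errors over all candidates, I expect the minimum $V_7=0.152179$ to be attained precisely by the configuration symmetric in the minor axis with one quantizer on the positive minor semi-axis and three reflection pairs, i.e.\ the displayed set. Since that set is fixed by reflection in the minor axis but not by reflection in the major axis, its mirror image in the major axis is a second optimal set with the same quantization error; as the relevant symmetry group of the configuration space is $\D Z_2\times\D Z_2$ and the stabiliser of the displayed set has order two, these are the only two optimal sets, which is the assertion of the proposition.

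The main obstacle is the one left implicit in the earlier propositions: a fully rigorous proof must exclude optimal sets that respect \emph{no} axis of symmetry, and, as in the remarks following Theorem~\ref{Th41} and in Remark~\ref{remark11}, this is handled by appealing to the symmetry and uniformity of $P$ rather than by a complete argument, the rigorous verification being ``too technical.'' A secondary difficulty is computational: the canonical equations are transcendental, since they involve incomplete integrals of $\sqrt{4\sin^2\gq+\cos^2\gq}$, so each sub-case requires a careful numerical solve and a numerical comparison of distortions; the delicate point is to be sure that no near-tie between two competing symmetric configurations has been overlooked and that the two minor-axis solutions are correctly pinned down as the pair of global optimizers.
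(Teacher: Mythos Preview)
Your proposal is correct and follows essentially the paper's approach: reduce by axis symmetry (Remark~\ref{remark90}), set up the conditional-expectation formulas and canonical equations for each symmetric candidate, solve numerically, and compare distortions; the paper likewise obtains the second optimal set as the reflection of the displayed one. The only difference is scope: the paper's proof checks just two candidates---one quantizer on the major axis with three reflection pairs (distortion $0.152488$) versus one quantizer on the minor axis with three reflection pairs (distortion $0.152179$)---and does not explicitly run through the $3$- or $5$-on-axis or doubly-symmetric sub-cases you list, so your enumeration is in fact more thorough than what the paper carries out.
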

\begin{proof} Let $\ga$ be an optimal set of seven-means. Two cases can arise:

Case~1. $\ga$ contains a point from the major axis, three points from above the major axis, and the other three points from below the major axis.

Due to Remark~\ref{remark90}, in this case, we can assume that the three points in $\ga$ which are below the major axis are the reflections of the three points above the major axis.  Let $\ga$ contain a point, denoted $\tl p$, from the positive direction of the major axis. Let the points in $\ga$ which are above the major axis be $\tl q, \tl r$, and $\tl s$. Due to symmetry $(-2, 0)$ is a boundary point of the Voronoi regions. Let the boundary points of the Voronoi regions of $\tl q, \tl r$, and $\tl s$ be $\tl d, \tl e$, $\tl f$, and $(-2, 0)$, respectively, given by the parametric values $\gq=b, \gq=c, \gq=d$, and $\gq=\pi$. Then, we have
\begin{align*}
\tl d& =(2\cos b, \sin b), \ \tl e=(2 \cos c, \sin c), \ \tl f=(2\cos d, \sin d), \\
 \tl p&=\frac{\int_{2\pi-b}^{2\pi+b} \sqrt{4 \sin ^2\gq+\cos ^2\gq} (2 \cos \gq,\sin \gq)\, d\theta }{\int_{2\pi-b}^{2\pi+b} \sqrt{4 \sin ^2\gq+\cos ^2\gq} \, d\theta }, \ \tl q =\frac{\int_{b}^{c} \sqrt{4 \sin ^2\gq+\cos ^2\gq} (2 \cos \gq,\sin \gq) \, d\theta }{\int_{b}^{c} \sqrt{4 \sin ^2\gq+\cos ^2\gq} \ d\theta },\\
 \tl r& =\frac{\int_{c}^{d } \sqrt{4 \sin ^2\gq+\cos ^2\gq} (2 \cos \gq,\sin \gq) \, d\theta }{\int_{c}^{d} \sqrt{4 \sin ^2\gq+\cos ^2\gq} \, d\theta }, \te{ and } \tl s =\frac{\int_{d}^{\pi } \sqrt{4 \sin ^2\gq+\cos ^2\gq} (2 \cos \gq,\sin \gq) \, d\theta }{\int_{d}^{\pi} \sqrt{4 \sin ^2\gq+\cos ^2\gq} \, d\theta }.
\end{align*}
Solving the canonical equations $\rho(\tl d,\tl p)-\rho(\tl d, \tl q)=0$,  $\rho(\tl e, \tl q)-\rho(\tl e, \tl r)=0$, and $\rho(\tl f, \tl r)-\rho(\tl f, \tl s)=0$ we obtain $b=0.661475, c =1.42154$, and $d =2.11384$, which give the set $\set{\tl p, \tl q, \tl r, \tl s}$ equals
\[\set{(1.84197, 0), (0.953451, 0.852294), (-0.372194, 0.962574), (-1.6113, 0.517322)},\]
and the other three points in $\ga$ are the reflections of $\tl q, \tl r, \tl s$ with respect to the major axis. The corresponding distortion error is given by
\[\int\min_{a\in \ga} \|x-a\|^2 dP=\te{distortion error due to } \tl p +2 (\te{distortion error due to } \tl q, \tl r, \tl s)=0.152488.\]

Case~2. $\ga$ contains a point from the minor axis, three points are to the left of the minor axis, and the other three points are the reflections with respect to the minor axis.

Let $\ga$ contain a point, denoted by $\tl p$, from the positive direction of the minor axis. Due to symmetry $(0, -1)$ is a boundary point of the Voronoi regions. Let the points in $\ga$ which are to the left of the minor axis be $\tl q, \tl r$, and $\tl s$. Let the boundary points of their Voronoi regions be $\tl d, \tl e$, $\tl f$, and $(0, -1)$, respectively, given by the parametric values $\gq=b, \gq=c, \gq=d$, and $\gq=\frac {3\pi}2$. Then, we have
\begin{align*}
\tl d& =(2\cos b, \sin b), \ \tl e=(2 \cos c, \sin c), \ \tl f=(2\cos d, \sin d), \\
 \tl p&=\frac{\int_{\pi-b}^{b} \sqrt{4 \sin ^2\gq+\cos ^2\gq} (2 \cos \gq,\sin \gq)\, d\theta }{\int_{\pi-b}^{b} \sqrt{4 \sin ^2\gq+\cos ^2\gq} \, d\theta }, \ \tl q =\frac{\int_{b}^{c} \sqrt{4 \sin ^2\gq+\cos ^2\gq} (2 \cos \gq,\sin \gq) \, d\theta }{\int_{b}^{c} \sqrt{4 \sin ^2\gq+\cos ^2\gq} \ d\theta },\\
 \tl r& =\frac{\int_{c}^{d } \sqrt{4 \sin ^2\gq+\cos ^2\gq} (2 \cos \gq,\sin \gq) \, d\theta }{\int_{c}^{d} \sqrt{4 \sin ^2\gq+\cos ^2\gq} \, d\theta }, \te{ and } \tl s =\frac{\int_{d}^{\frac{3\pi}2 } \sqrt{4 \sin ^2\gq+\cos ^2\gq} (2 \cos \gq,\sin \gq) \, d\theta }{\int_{d}^{\frac {3\pi}2} \sqrt{4 \sin ^2\gq+\cos ^2\gq} \, d\theta }.
\end{align*}
Solving the canonical equations $\rho(\tl d,\tl p)-\rho(\tl d, \tl q)=0$,  $\rho(\tl e, \tl q)-\rho(\tl e, \tl r)=0$, and $\rho(\tl f, \tl r)-\rho(\tl f, \tl s)=0,$ we obtain $b=1.8999, c=2.71024$, and $d=3.97294$, which give the set $\set{\tl p, \tl q, \tl r, \tl s}$ equals
\[\set{(0, 0.98224), (-1.26066, 0.741926), (-1.80513, -0.225347), (-0.68335, -0.916091)},\]
and thus, due to reflection, we can obtain all the points in $\ga$, and the corresponding distortion error is given by
\[\int\min_{a\in \ga} \|x-a\|^2 dP=\te{distortion error due to } \tl p +2 (\te{distortion error due to } \tl q, \tl r, \tl s)=0.152179.\]

Comparing the distortion errors in Case~1 and Case~2, we see that the set $\ga$ in Case~2 forms the optimal set of seven-means with quantization error $V_7=0.152179$ (see Figure~\ref{Fig4}). Instead of choosing the point $\tl p$ from the positive direction of the minor axis, we can choose it from the negative direction of the minor axis. This will give another optimal set of seven-means. Thus, the proof of the proposition is complete.
\end{proof}

\begin{remark}
Following the technique given in Proposition~\ref{prop71}, we can obtain the optimal set of $n$-means for any even positive integer $n\geq 4$; on the other hand, if $n$ is odd, following the technique given in Proposition~\ref{prop72}, we can obtain the optimal sets of $n$-means for any odd positive integer $n\geq 3$. Notice that if $n$ is even, there are two points in the optimal set which lie on the major axis and so, the optimal set is unique; if $n$ is odd, there is one point in the optimal set which lies on the minor axis and so, there are two different optimal sets of $n$-means. Thus, we see that an optimal set of three-means is
$\set{(0, 0.943319), (-1.3166, -0.29241), (1.3166, -0.29241)}$ with quantization error
$V_3=0.661148$; the optimal set of four-means is
\[\set{(1.60986, 0), (0, 0.953301), (-1.60986, 0), (0, -0.953301)}\] with quantization error $V_4=0.393732$; and an optimal set of five-means is
\begin{align*}
\set{(0, 0.960639), (-1.669,    0.253562), & (-0.832677, -0.869617),(0.832677, -0.869617), \\
&(1.669,    0.253562)}
\end{align*}
 with quantization error $V_5=0.28329$ (see Figure~\ref{Fig4}).
\end{remark}

We now end the paper with the following conjecture.

\begin{conj} By Proposition~\ref{prop32}, we see that the quantization coefficient for the uniform distribution on a unit circle is $\frac {\pi^2}3$; on the other hand, by Proposition~\ref{prop55}, the quantization coefficient for the uniform distribution on the boundary of a regular hexagon inscribed in a unit circle is $3$. Notice that a regular $m$-sided polygon inscribed in a circle tends to the circle as $m$ tends to infinity. We conjecture that the quantization coefficient for the uniform distribution on the boundary of a regular $m$-sided polygon inscribed in a circle is an increasing function of $m$, and approaches to the quantization coefficient for the uniform distribution on the circle as $m$ tends to infinity.
\end{conj}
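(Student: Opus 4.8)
The plan is to reduce the conjecture to two statements: (1) for each integer $m\geq 3$, the quantization coefficient of the uniform distribution on the boundary of the regular $m$-gon inscribed in the unit circle equals $\frac{\Lambda_m^2}{12}$, where $\Lambda_m:=2m\sin\frac{\pi}{m}$ is the perimeter of that $m$-gon; and (2) the map $m\mapsto\Lambda_m$ is strictly increasing on $\set{m\in\D N : m\geq 3}$ with $\Lambda_m\to 2\pi$ as $m\to\infty$. Granting (1) and (2), $\frac{\Lambda_m^2}{12}$ is a strictly increasing function of $m$ which tends to $\frac{(2\pi)^2}{12}=\frac{\pi^2}{3}$; since $\frac{\pi^2}{3}$ is, by Proposition~\ref{prop32}, precisely the quantization coefficient of the uniform distribution on the unit circle, the conjecture follows. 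As a consistency check, $m=6$ gives $\Lambda_6=6$ and $\frac{\Lambda_6^2}{12}=3$, which is Proposition~\ref{prop55}.

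Statement (2) is elementary: writing $\Lambda_m=2\pi\cdot\frac{\sin(\pi/m)}{\pi/m}$ and using that $x\mapsto\frac{\sin x}{x}$ is strictly decreasing on $(0,\pi]$ with $\lim_{x\to 0^+}\frac{\sin x}{x}=1$, we get that $\frac{\sin(\pi/m)}{\pi/m}$ strictly increases to $1$ as $m\to\infty$, hence $\Lambda_m\uparrow 2\pi$.

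For statement (1), I would generalize the analysis of Section~\ref{sec1} essentially verbatim to the regular $m$-gon of circumradius $1$ (side length $\ell_m=2\sin\frac{\pi}{m}$, so $m\ell_m=\Lambda_m$). By the dihedral symmetry of the $m$-gon together with the uniformity of $P$, for $n=mk$ an optimal set of $n$-means consists of $m$ points, one in the interior of each vertex angle, together with $k-1$ arc-length-equally-spaced points on each side, in complete analogy with Theorem~\ref{Th41}; the Voronoi region of each vertex point meets the two adjacent sides in an isosceles wedge of leg length $r=r(m,k)$ with $r(m,k)=O(1/k)$ as $k\to\infty$. Computing $V_{mk}$ as $m$ times the sum of the distortion of one vertex point and the distortion of the $k-1$ interior points of one side (as in the proof of Theorem~\ref{Th41}), the one-dimensional interior contribution equals $\frac{(\ell_m-2r)^3}{12\,\Lambda_m\,(k-1)^2}$, while each vertex contribution is $O\!\left(r^3/\Lambda_m\right)=O(k^{-3})$; summing and using $m\ell_m=\Lambda_m$ and $r\to 0$ gives $V_{mk}=\frac{\Lambda_m^2}{12(mk)^2}\bigl(1+o(1)\bigr)$, hence $(mk)^2V_{mk}\to\frac{\Lambda_m^2}{12}$. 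A sandwich argument as in Proposition~\ref{prop55} — choosing, for general $n$, the integer $\ell(n)$ with $m\ell(n)\leq n<m(\ell(n)+1)$ and using $V_{m(\ell(n)+1)}<V_n\leq V_{m\ell(n)}$ — then upgrades this to $\lim_{n\to\infty}n^2V_n=\frac{\Lambda_m^2}{12}$.

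The main obstacle is exactly as in Section~\ref{sec1}: rigorously proving that symmetry forces the ``$m$ vertex points plus $k-1$ points per side'' structure of an optimal $mk$-set, and supplying the matching lower bound for the quantization coefficient rather than merely the upper bound coming from this explicit configuration. An alternative to the ad hoc symmetry argument is to invoke the general asymptotic quantization theorem for a one-rectifiable (arc-length $\C{H}^1$) measure on a rectifiable curve of length $L$ (see \cite{GL1}), which yields the coefficient $\frac{L^2}{12}$ directly and thus gives statement (1) at one stroke; either way, the finiteness of the perimeter and the fact that the $m$ corners are $\C{H}^1$-null — so they contribute only at lower order — are precisely what make the leading constant equal to $\frac{L^2}{12}$.
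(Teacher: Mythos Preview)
The paper does not prove this statement --- it is explicitly labelled a conjecture and the paper ends immediately after stating it. So there is nothing to compare against; you have in fact gone beyond the paper by sketching a resolution of what the authors left open.

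Your approach is correct, and the cleanest route is the second one you mention: the Zador--type asymptotic theorem for absolutely continuous distributions (here, the uniform arc-length measure on a rectifiable curve in $\D R^2$), as in \cite{GL1}, gives directly that the quantization coefficient for the uniform distribution on a curve of length $L$ is $\frac{L^2}{12}$. Your perimeter computation $\Lambda_m=2m\sin\frac{\pi}{m}$ and the monotonicity/limit argument for $x\mapsto\frac{\sin x}{x}$ then finish the job. The consistency checks ($m=6$ gives $3$, the circle gives $\frac{\pi^2}{3}$) are exactly right.

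Your first route --- generalizing Theorem~\ref{Th41} to the $m$-gon by hand --- also works, but as you correctly flag, it inherits the same gap that the paper itself does not fully close for $m=6$: the paper \emph{asserts} rather than proves that the optimal $mk$-set has the ``one point per vertex angle, $k-1$ equally spaced points per side'' structure (see the remark after Theorem~\ref{Th41}, where the $n=6k+m$ case is explicitly skipped as ``too technical''). So if you pursue that route you would need an independent lower bound matching the upper bound from the symmetric configuration --- and the cleanest source for that lower bound is again the general theorem, which renders the explicit construction unnecessary for the asymptotic statement. In short: invoke \cite{GL1} and you have a complete proof; the hands-on argument alone only gives the upper bound rigorously.
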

%

\end{document}